\newcommand*\smallcircled[1]{\tikz[baseline=(char.base)]{
    \node[shape=circle,draw,inner sep=2pt] (char) {\tiny #1};}}
\newcommand{\A}{\mathbb{A}}
\newcommand{\C}{\mathbb{C}}
\renewcommand{\k}{\Bbbk}
\renewcommand{\O}{\mathcal{O}}
\renewcommand{\P}{\mathbb{P}}
\newcommand{\Z}{\mathbb{Z}}
\newcommand{\Spec}{\mathrm{Spec}\,}
\newcommand{\RHom}{\mathrm{RHom}}
\newcommand{\intRHom}{\mathrm{R}\mathcal{H}\mathrm{om}}
\newcommand{\RGamma}{\mathrm{R}\Gamma\,}
\newcommand{\Dbcoh}{D^b_{\!\mathrm{coh}}}
\newcommand{\Tot}{\mathrm{Tot}}
\newcommand{\GL}{\mathrm{GL}}
\newcommand{\mA}{\mathcal{A}}
\newcommand{\dual}{{\scriptstyle\vee}}
\newcommand{\iso}{\simeq}
\newcommand{\caniso}{\cong}
\newcommand{\monoarrow}{\hookrightarrow}
\newcommand{\epiarrow}{\twoheadrightarrow}
\newcommand{\rk}{\operatorname{rk}}
\newcommand{\coker}{\operatorname{coker}\,}
\newcommand{\Gr}{\operatorname{Gr}}
\newcommand{\Fl}{\operatorname{Fl}}
\newcommand{\aff}{\mathrm{aff}}
\newcommand{\Sym}{\operatorname{Sym}}
\newcommand{\multl}{\mathrm{mult}_{\Lambda-2\Z}}
\newcommand{\diagsl}{\mathrm{diags}_{\Lambda-2\Z}}
\declaretheoremstyle[
headformat=\NUMBER.\,\NAME\NOTE,
postheadspace=.5em,
spaceabove=6pt,
headfont=\normalfont\small\scshape,
notefont=\normalfont\small\mdseries, notebraces={(}{)},
bodyfont=\normalfont\itshape
]{plainswap}
\declaretheoremstyle[
headformat=\NAME\NOTE,
postheadspace=.5em,
spaceabove=6pt,
headfont=\normalfont\small\scshape,
notefont=\normalfont\small\mdseries, notebraces={(}{)},
bodyfont=\normalfont\itshape
]{nonumplainswap}
\declaretheoremstyle[
headformat=\NUMBER.\,\NAME\NOTE,
postheadspace=.5em,
spaceabove=6pt,
headfont=\normalfont\small\scshape,
notefont=\normalfont\mdseries, notebraces={(}{)},
bodyfont=\normalfont
]{definitionswap}
\declaretheoremstyle[
headformat=\NAME\NOTE,
postheadspace=.5em,
spaceabove=6pt,
headfont=\normalfont\itshape,
notefont=\mdseries, notebraces={(}{)},
bodyfont=\normalfont
]{myremark}
\declaretheorem[style=plainswap, name=Theorem, sharenumber=subsection]{theorem}
\declaretheorem[style=plainswap, numberlike=theorem, name=Proposition]{proposition}
\declaretheorem[style=plainswap, numberlike=theorem, name=Lemma]{lemma}
\declaretheorem[style=plainswap, numberlike=theorem, name=Corollary]{corollary}
\declaretheorem[style=plainswap, numberlike=theorem, name=Conjecture]{conjecture}
\theoremstyle{definition}
\declaretheorem[style=definitionswap, numberlike=theorem, name=Definition]{definition}
\theoremstyle{myremark}
\newtheorem*{remark}{Remark}
\theoremstyle{remark}
\numberwithin{equation}{theorem}
\begin{document}

\title{Towards homological projective duality for $\Gr(2, 2n)$}
\author{Dmitrii Pirozhkov}

\begin{abstract}
Consider a Grassmannian~$\mathrm{Gr}(2, V)$ for an even-dimensional vector space~$V$. Its derived category of coherent sheaves has a Lefschetz exceptional collection with respect to the Pl\"ucker embedding. We consider a variety~$X_1$ of pairs consisting of a degenerate~$2$-form on~$V$ and a line in its kernel. Note that~$X_1$ is generically a~$\mathbb{P}^1$-fibration over the Pfaffian variety of degenerate~$2$-forms on~$V$. We construct an exceptional collection of coherent sheaves on~$X_1$ such that the subcategory of~$\Dbcoh(X_1)$ generated by that collection is conjecturally equivalent to the homologically projectively dual category of the Grassmannian.
\end{abstract}

\maketitle

\section{Introduction}
\label{sec:introduction}

\subsection{Homological projective duality}
Homological projective duality is a deep theory that relates certain objects, so-called \emph{Lefschetz categories}, over some projective space~$\P(V)$ with Lefschetz categories over the dual projective space~$\P(V^\dual)$. Ignoring some (very important!) technicalities for the purpose of this introduction we say that a Lefschetz category over~$\P(V)$ is a pair: a triangulated category~$\mA$ with an action of the tensor triangulated category~$(\Dbcoh(\P(V)), \otimes)$, and a Lefschetz semiorthogonal decomposition of~$\mA$, i.e., a semiorthogonal decomposition that is in some specific sense compatible with the action of the twist by~$\O_{\P(V)}(1)$ on~$\mA$. Proper definitions along these lines can be found in~\cite{noncommutativehpd}; this formulation is slightly more abstract than the original one from the foundational paper~\cite{hpd} by Kuznetsov, where the preference was given to Lefschetz categories arising from geometry.

Homological projective duality, below often abbreviated as HPD, defines an explicit construction that takes a (sufficiently nice) Lefschetz category over~$\P(V)$ and outputs a Lefschetz category over the dual projective space~$\P(V^\dual)$. This construction has many nice properties, see~\cite{hpd} or~\cite[Part~II]{noncommutativehpd} for details. Both components of the dual Lefschetz category, i.e., both the $\Dbcoh(\P(V^\dual))$-linear category and the Lefschetz semiorthogonal decomposition of it, depend on the choice of the Lefschetz decomposition in the original category.

While the construction of the duality and proof of its properties are purely abstract results, most applications of the theory of homological projective duality, as well as the motivating examples that led to its creation, are of geometric origin. Given a variety~$X$ with a map~$f\colon X \to \P(V)$ to the projective space, we can consider the category~$\Dbcoh(X)$ as a~$\Dbcoh(\P(V))$-linear category. For any Lefschetz semiorthogonal decomposition of $\Dbcoh(X)$ with respect to this action we construct the homologically projectively dual Lefschetz category over $\P(V^\dual)$. A natural question is to find a geometric interpretation for the dual category.

The original construction is, in a sense, geometric: we take the universal hyperplace section variety~$\mathcal{X} \subset \P(V) \times \P(V^\dual)$ of~$X$ and define the HPD category to be the orthogonal to some specific list of admissible subcategories in~$\Dbcoh(\mathcal{X})$. Still, for some specific~$X$ it is possible to do better. For example, when~$X \subset \P(V)$ is a linear subspace (with a natural Lefschetz decomposition), its HPD category is the derived category of coherent sheaves on the orthogonal linear subspace~$X^\perp \subset \P(V^\dual)$. This description is better at least in the sense that it is more ``economical'': instead of considering a high-dimensional variety over~$\P(V^\dual)$ and taking a quotient of its derived category by some relatively large collection of subcategories, we give a direct answer using a lower-dimensional variety. A search for more direct descriptions of the HPD categories of certain natural varieties, such as Grassmannian varieties, is a major part of the study of homological projective duality. Descriptions like that led to many applications. For an early example, a description of the HPD category of the Grassmannian~$\Gr(2, 7)$ as a non-commutative resolution of singularities of the Pffaffian variety~\cite{kuznetsov2006gr2l} was used to construct a pair of derived equivalent non-birational Calabi--Yau threefolds in~\cite{borisov-caldararu}. More examples can be found in~\cite[Sec.~5]{thomas}.

\subsection{Grassmannian of two-dimensional subspaces}
An interesting family of examples is given by the Grassmannians of two-dimensional subspaces in a vector space. Let $V$ be a vector space of dimension $N$ over an algebraically closed field $\k$ of characteristic zero, and consider the Pl\"ucker embedding $\Gr(2, V) \subset \P(\Lambda^2 V)$. Let us recall the definition of the Lefschetz semiorthogonal decomposition~\cite{hpd}:

\begin{definition}
  Let $T$ be a triangulated category with an endofunctor $\Phi\colon T \to T$. A \emph{Lefschetz semiorthogonal decomposition of $T$ with respect to $\Phi$} is a semiorthogonal decomposition of the form
  \[
    T = \langle \mA_0, \Phi(\mA_1), \Phi^2(\mA_2), \ldots, \Phi^{m-1}(\mA_{m-1}) \rangle,
  \]
  where~$\mA_0 \supset \mA_1 \supset \mA_2 \supset \ldots \supset \mA_{m-1}$ is a chain of subcategories of~$T$. If~$T$ is a~$\Dbcoh(\P^n)$-linear category, a Lefschetz semiorthogonal decomposition is implicitly assumed to be with respect to the autoequivalence $- \otimes \O_{\P^n}(1)$.
\end{definition}

There are several options for a Lefschetz semiorthogonal decomposition of $\Dbcoh(\Gr(2, V))$ with respect to the Pl\"ucker embedding. We take the one from \cite{kuznetsov-exc-grassmannians}:
\begin{equation}
  \label{intro:grassmannian decomposition}
  \Dbcoh(\Gr(2, V)) = \langle \mA_0, \mA_1(1), \ldots, \mA_{N-1}(N-1) \rangle,
\end{equation}
where for odd $N = 2n+1$ all subcategories $\mA_0 = \ldots = \mA_{N-1}$ are generated by the exceptional collection
\[
  \langle \O, Q, \Lambda^2 Q, \ldots, \Lambda^{n-1} Q \rangle,
\]
and for even $N = 2n$ we have
\[
  \mA_0 = \ldots = \mA_{n-1} = \langle \O, Q, \ldots, \Lambda^{n-1} Q \rangle, \qquad \mA_n = \ldots = \mA_{N_1} = \langle \O, Q, \ldots, \Lambda^{n-2} Q \rangle.
\]

It was conjectured in \cite{kuznetsov2006gr2l} that the HPD category to $\Gr(2, V)$ equipped with this Lefschetz semiorthogonal decomposition should be a noncommutative resolution of singularities of the Pfaffian variety $\mathrm{Pf}(V) \subset \P(\Lambda^2 V^\dual)$, i.e., the variety of~$2$-forms on~$V$ whose rank is smaller than~$2 \lfloor \frac{N}2 \rfloor$. In that paper Kuznetsov proved the conjecture for~$N = 6$ and~$N = 7$ by explicitly constructing a Lefschetz exceptional collection in some geometric resolution of singularities of the Pfaffian variety. The proof that the subcategory generated by that collection is homologically projectively dual to the Grassmannian is a complicated computation. Later Rennemo and Segal proved the conjecture for odd values of $N$ in the paper~\cite{rennemo-segal} using a different approach: instead of a ``classical'' resolution of singularities for the Pfaffian variety their starting point is a certain Artin stack birational to the Pfaffian, and they identify the~HPD category of the Grassmannian with a particular subcategory in the matrix factorization category of that stack. They define a candidate subcategory for the case of even-dimensional $V$ as well. Below we describe a different candidate for the HPD category for $\Gr(2, 2n)$.

\subsection{HPD category in the even-dimensional case}
\label{ssec:intro gr2n}
We suggest a conjectural description of the HPD category for the Grassmannian $\Gr(2, V)$ for the case of even $N = 2n$ in terms close to the geometry of the Pfaffian variety. The description is similar in spirit to the exceptional collection that Kuznetsov constructed in~\cite{kuznetsov2006gr2l} for the $\Gr(2, 6)$-case.

Let $X_1$ be the variety of pairs~$\{ ([\omega] \in \P(\Lambda^2 V^\dual), \ell \in \P(V)) \}$, where~$\omega$ is a (degenerate)~$2$-form on~$V$ and~$\ell$ is a one-dimensional subspace in the kernel of~$\omega$. The subscript $1$ in $X_1$ refers to the dimension of $\ell$. Let~$\pi\colon X_1 \to \P(V)$ be the projection morphism. It is easy to see that~$\pi$ is the projectivization of the vector bundle~$\Lambda^2 Q^\dual$, where~$Q$ is the tautological quotient bundle on~$\P(V)$. In particular, $X_1$ is a smooth projective variety. The other projection map $p\colon X_1 \to \P(\Lambda^2 V^\dual)$ factors through the Pffaffian variety of degenerate~$2$-forms on~$V$. Since the dimension of~$V$ is even, the generic degenerate~$2$-form has a two-dimensional kernel, so~$X_1$ is generically a~$\P^1$-fibration over the Pfaffian variety. We define an admissible subcategory~$\mA \subset \Dbcoh(X_1)$ as follows.

Let~$\O(H)$ be the pullback of the line bundle~$\O_{\P(\Lambda^2 V)}(1)$ along the morphism~$p$. For an integer~$0 \leq i \leq n-1$ we define the object~$L_i \in \Dbcoh(X_1)$ as the cone of the morphism
\begin{equation}
  \label{intro:lis}
  \pi^* \Lambda^{2n-i} Q \otimes \O(-(n-i)H) \to \pi^* \Lambda^i Q
\end{equation}
of vector bundles on $X_1$ defined over the point $([\omega], \ell)$ via the pairing with the $(n-i)$'th power of $\omega$. By definition of~$X_1$ the~$2$-form~$\omega$ on~$V$ descends to the~$2$-form on~$\pi^* Q$, so this makes sense. Note that for $i = 0$ the first vector bundle in~\eqref{intro:lis} vanishes and hence~$L_0 \caniso \O_{X_1}$. In fact, we show in Lemma~\ref{lem:li are exceptional} that the morphism~\eqref{intro:lis} is injective and hence each $L_i$ is a coherent sheaf. The main result of this paper is the following theorem.

\begin{theorem}[{ = Theorem~\textup{\ref{thm:lefschetz grouping}}}]
  \label{intro:lefschetz grouping}
  \begin{enumerate}
  \item The sequence $\langle L_0, L_1, \ldots, L_{n-1} \rangle$ is an exceptional collection on $X_1$.
  \item Let $\mA_0$ be the category generated by the sequence above. Then the sequence
    \[
      \langle \mA_0, \mA_0(H), \ldots, \mA_0(k_{\mathrm{max}}H) \rangle
    \]
    is semiorthogonal where $k_{\mathrm{max}} = \binom{2n-1}{2} - 2$.
  \item\ Moreover, that semiorthogonal sequence of subcategories can be extended:
    \begin{equation}
      \langle \mA_0, \mA_0(H), \ldots, \mA_0(k_{\mathrm{max}}H), L_0((k_{\mathrm{max}} + 1)H), \ldots, L_0((k_{\mathrm{max}} + n)H) \rangle
    \end{equation}
    by adding $n$ copies of the twists of $L_0 \caniso \O$. We refer to the subcategory generated by this sequence as $\mA$.
  \end{enumerate}
\end{theorem}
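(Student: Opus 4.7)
The unifying strategy is to reduce every $\Ext$-computation on $X_1$ to cohomology on $\P(V) \cong \P^{2n-1}$ via the projective bundle structure $\pi\colon X_1 \to \P(V)$, which identifies $X_1$ with the projectivization of $\Lambda^2 Q^\dual$, a bundle of rank $\binom{2n-1}{2}$. Each $L_i$ is a coherent sheaf represented by the two-term complex of locally free sheaves $[\pi^* \Lambda^{2n-i}Q \otimes \O(-(n-i)H) \to \pi^* \Lambda^i Q]$ (with $L_0 = \O$ being the degenerate case), so every $\RHom_{X_1}(L_i(kH), L_j(lH))$ unfolds, via a $2 \times 2$ double complex, into four terms of the form $\RHom_{\P(V)}(\Lambda^a Q, \Lambda^{a'} Q \otimes R\pi_*\O((b'-b)H))$ by the projection formula. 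The key input is that $R\pi_*\O(mH)$ equals $\Sym^m \Lambda^2 Q$ in degree $0$ for $m \geq 0$, vanishes for $-\binom{2n-1}{2}+1 \leq m \leq -1$, and is concentrated in top degree $\binom{2n-1}{2}-1$ for $m \leq -\binom{2n-1}{2}$; the resulting cohomology on $\P^{2n-1}$ of Schur-twisted bundles is then accessible by Borel--Weil--Bott after decomposition into irreducibles.

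With this machine in place, each part becomes a concrete verification. For parts~(2) and~(3), many of the twist differences arising in the double complex fall into the middle-range vanishing of $R\pi_*$; the value $k_{\max} = \binom{2n-1}{2}-2$ is calibrated so that the principal corners land in this range, and the surviving corners must either cancel in the spectral sequence via the $\omega$-contraction differentials, or vanish after a direct Borel--Weil--Bott check on $\P^{2n-1}$. In particular, the right-orthogonality in part~(3) of the added line bundles $\O((k_{\max}+s)H)$ to $\mA_0(jH)$ amounts to showing that $R\Gamma(X_1, L_i(mH)) = 0$ for $m$ in an explicitly bounded negative range, which again reduces to cohomology computations of specific Schur-twisted bundles on $\P(V)$; the mutual semiorthogonality of the added line bundles follows cleanly from the middle-range vanishing of $R\pi_* \O(-tH)$ for $1 \leq t \leq n-1$.

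The main obstacle is part~(1), the exceptionality $\Ext^*(L_i, L_j) = \delta_{i,j}\,\k$ within $\mA_0$. While one of the four corners of the double complex vanishes automatically by middle-range cancellation of $R\pi_* \O((j-n)H)$, the three remaining corners contribute genuine $\Sym^m \Lambda^2 Q$-twisted cohomology, so the required identity must emerge from delicate cancellations among them rather than from pointwise vanishing. The spectral sequence differentials are induced by contraction with powers of $\omega$, and the key technical step is to match these differentials with the natural pairings between exterior powers of $Q$ --- a kind of Pfaffian self-duality of the objects $L_i$ --- to identify how the three nonvanishing contributions conspire to leave exactly $\k$ in degree zero on the diagonal and zero elsewhere. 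Working this out via the plethystic decomposition of $\Sym^m \Lambda^2 Q \otimes \Lambda^a Q \otimes \Lambda^{a'}Q$ into irreducibles on $\P^{2n-1}$ is the technical heart of the proof.
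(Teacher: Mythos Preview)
Your overall framework---pushforward along $\pi$, projection formula, Borel--Weil--Bott---is exactly what the paper uses. However, your assessment of where the difficulty lies is inverted, and this matters because the hard step requires a specific algebraic input you have not identified.

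Part~(1) is the \emph{easy} part. For $i<j$ and $\RHom(L_j,L_i)$, all four corners of the double complex vanish \emph{separately}: two by the middle-range vanishing of $R\pi_*\O(mH)$ (the twists $-(n-i)$ and $i-j$ both land there), and the other two by a one-line box count (e.g.\ the target $\Lambda^i Q\otimes\Sym^{n-j}(\Lambda^2 Q)$ decomposes into $\Sigma^\lambda Q$ with $|\lambda|=2n-j-(j-i)<2n-j$, so $\RHom(\Lambda^{2n-j}Q,-)$ vanishes by BWB). No cancellation is required. For the diagonal $i=j$, the paper avoids any spectral sequence by observing that the two bundles defining $L_i$ form an exceptional pair with one-dimensional $\Hom$ between them, so $L_i$ is a mutation of that pair and hence automatically exceptional.

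The genuine difficulty is in Part~(2), specifically the range $1\le k\le j-i$ in the computation of $\RHom(\pi^*\Lambda^{2n-i}Q\otimes\O(-(n-i)H),\,L_j(-kH))$. Here both relevant pushforwards are nonzero, and one must show that $\Lambda^{2n-i}Q$ is left-orthogonal to the cone of
\[
\Lambda^{2n-j}Q\otimes\Sym^{j-i-k}(\Lambda^2 Q)\ \xrightarrow{\ \varphi\ }\ \Lambda^jQ\otimes\Sym^{n-i-k}(\Lambda^2 Q).
\]
The paper's key move is to recognise this map as an instance of a universal natural transformation $\varphi_{n,j,m}$ of polynomial functors, and to prove two facts about it: (a) its kernel consists only of Schur functors for diagrams with more than $2n$ rows, hence vanishes on the rank-$(2n{-}1)$ bundle $Q$ (shown by evaluating on a $2n$-dimensional space and using a nondegenerate symplectic form); and (b) for small $m$ its cokernel consists only of diagrams with strictly fewer than $2n-i-k$ rows (shown by an explicit bijection on admissible vertical strips). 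These bounds force the cone into the BWB-acyclic range for $\Lambda^{2n-i}Q$. Part~(3) runs on the same engine, using the dual map $\varphi_{n-1,i-1,k'}$ together with the observation that diagrams with exactly $2n{-}2$ or $2n{-}3$ rows are absent from its cone.

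Your phrase ``cancel in the spectral sequence via the $\omega$-contraction differentials'' gestures at this but does not supply it: the point is not that the differential is nonzero, but that it is \emph{maximally} nonzero on the relevant isotypic components, and establishing that is precisely the content of the $\varphi_{n,i,m}$ analysis. Without it there is a gap at the heart of Part~(2).
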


We conjecture that the category $\mA$ is, in fact, the HPD category of $\Gr(2, V)$.

\begin{conjecture}[{ = Conjecture~\textup{\ref{conj:main conjecture}}}]
  \label{intro:main conjecture}
  The subcategory~$\mA \subset \Dbcoh(X_1)$ is linear with respect to the projection morphism~$p\colon X_1 \to \P(\Lambda^2 V^\dual)$, and there exists a~$\Dbcoh(\P(\Lambda^2 V^\dual))$-linear equivalence between~$\mA$ and the homologically projectively dual category to~$\Gr(2, V)$ equipped with the Lefschetz semiorthogonal decomposition~\textup{\eqref{intro:grassmannian decomposition}}.
\end{conjecture}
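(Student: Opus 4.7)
My plan is to treat the two assertions of the conjecture separately, with the identification of $\mA$ as the HPD category occupying the bulk of the work.

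First, for $p$-linearity: since the target $\P(\Lambda^2 V^\dual)$ is a projective space, it suffices to show that $\mA$ is closed under the twist $\otimes \O(H)$. Theorem~\ref{intro:lefschetz grouping} presents $\mA$ by the generating sequence $\langle \mA_0, \mA_0(H), \ldots, \mA_0(k_{\mathrm{max}}H), L_0((k_{\mathrm{max}}+1)H), \ldots, L_0((k_{\mathrm{max}}+n)H) \rangle$. After twisting by $\O(H)$ the leftmost block $\mA_0$ disappears and a new rightmost block $\O((k_{\mathrm{max}}+n+1)H)$ appears. Linearity therefore reduces to showing that this extra twist of $\O$, after being left-mutated past all intermediate blocks, lands back inside $\mA_0 \subset \mA$. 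Equivalently, using the explicit generators $L_0, \ldots, L_{n-1}$ of $\mA_0$, one computes the left Serre functor of $\mA$ and verifies that it sends the rightmost exceptional object to an $H$-twist of some $L_i$. The $L_i$ are explicit cokernels of the morphism~\eqref{intro:lis}, so this reduces to a finite computation with Koszul-type complexes built from the pairing against $\omega^{n-i}$ on~$X_1$.

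Second, to identify $\mA$ with the HPD category of $\Gr(2, V)$, my plan is to construct an explicit Fourier--Mukai functor from $\Dbcoh(X_1)$ to $\Dbcoh(\mathcal{Y})$ via a correspondence, where $\mathcal{Y} \subset \Gr(2, V) \times \P(\Lambda^2 V^\dual)$ is the universal hyperplane section. Consider
\[
Z = \{ ([\omega], \ell, p) \in \P(\Lambda^2 V^\dual) \times \P(V) \times \Gr(2, V) : \ell \subset p,\ \ell \subset \ker \omega \}.
\]
Since any $2$-plane $p$ containing a nonzero vector of $\ker \omega$ is automatically $\omega$-isotropic, the forgetful map $q_Z \colon Z \to \mathcal{Y}$ is well-defined, while the other forgetful map $r_Z \colon Z \to X_1$ presents $Z$ as a $\P^{N-2}$-bundle. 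For an appropriate line bundle twist $M$ on $Z$ one defines $\Phi = (q_Z)_*(M \otimes r_Z^*(-))$, and the goal is to show that $\Phi$ restricts to a $\Dbcoh(\P(\Lambda^2 V^\dual))$-linear equivalence between $\mA$ and the HPD category. By Kuznetsov's universal HPD theorem~\cite{hpd}, the HPD category is the right orthogonal in $\Dbcoh(\mathcal{Y})$ to a specific list of admissible subcategories built from the decomposition~\eqref{intro:grassmannian decomposition}. Full faithfulness of $\Phi|_{\mA}$ reduces to controlling $\Ext$-groups between the $L_i(kH)$ by base change along $Z \times_{X_1} Z$, the containment $\Phi(\mA) \subset \mathrm{HPD}$ reduces to explicit $\Ext$-vanishings between $\Phi(L_i(kH))$ and the Lefschetz blocks on $\mathcal{Y}$, and essential surjectivity can be established either by Hochschild homology dimension counts or by constructing a quasi-inverse via $(r_Z)_* q_Z^*$.

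The hard part, and the reason the identification remains only a conjecture, is controlling the interaction between the cone presentation of $L_i$ and the universal $2$-form on $\mathcal{Y}$ under the correspondence $Z$. The kernel $\ker \omega$ jumps in dimension as $\omega$ degenerates along deeper Pfaffian strata, so the fibers of $q_Z$ are strata-dependent, and the orthogonality computation against the Lefschetz block produces contributions governed by the cohomology of exterior powers of the tautological bundles on partial flag varieties over each stratum. Two useful sanity checks for the overall strategy would be, first, the case $N = 6$, where one could try to match the collection $\{ L_i(kH) \}$ against Kuznetsov's explicit exceptional collection in~\cite{kuznetsov2006gr2l} by a direct birational comparison between $X_1$ and his resolution of the Pfaffian; and second, a direct comparison with the Rennemo--Segal candidate for even $N$, which would require exhibiting a derived equivalence between $\mA$ and their subcategory of a matrix factorization category.
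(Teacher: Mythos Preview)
The statement you are asked to prove is a \emph{conjecture} in the paper, not a theorem: the paper does not prove it and explicitly says so. In Section~\ref{ssec:conj linearity} the author writes that linearity ``should'' hold, and that figuring out the correct functor to the universal hyperplane section and proving enough properties ``seems to be quite difficult''. So there is no ``paper's own proof'' to compare against; there is only a brief discussion of what a proof might look like.

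Your proposal is not a proof either, and to your credit you acknowledge this (``the reason the identification remains only a conjecture''). What you have written is a plausible strategy. It is worth noting that it matches the paper's informal discussion quite closely: your correspondence $Z$ is exactly the natural correspondence the paper describes in Section~\ref{ssec:conj linearity} (a point of $X_1$ gives a line $\ell \subset \ker\omega$, and any $2$-plane through $\ell$ is isotropic), and the paper likewise says that identifying the right Fourier--Mukai kernel and establishing its properties is the hard part. Your suggested sanity checks are also anticipated by the paper: the $N=6$ comparison with \cite{kuznetsov2006gr2l} is discussed in Subsection~1.4 and Subsection~\ref{ssec:comparison with x2}, and the comparison with Rennemo--Segal is mentioned at the end of Subsection~1.4.

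On the linearity part, your reduction to closure under $\otimes\,\O(H)$ is correct in principle (since $\Dbcoh(\P^M)$ is generated by consecutive line bundle twists, closure under a single twist plus triangulated closure gives closure under the whole action). However, the sentence ``this reduces to a finite computation with Koszul-type complexes'' is doing all the work and is not substantiated; the paper does not carry this out either, and in Subsection~\ref{ssec:x1 decomposition} even the weaker related statement about generation of $\Dbcoh(X_1)$ is made \emph{conditional} on linearity. So neither you nor the paper has an argument for linearity, and your mutation sketch should not be presented as if it were routine.

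In short: your proposal is a reasonable outline that coincides with the approach the paper suggests, but neither constitutes a proof, and the paper makes no claim to have one.
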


The semiorthogonalities from Theorem~\ref{intro:lefschetz grouping} are not hard to check numerically, i.e., on the level of $K_0(X_1)$, but lifting this result to the actual semiorthogonality in the derived category is harder. We reduce the semiorthogonality to some algebraic questions related to modules over the symmetric algebra on an exterior square of a vector space.

\subsection{Comparison with related results}
The definition of objects $L_i$ as cones in~\eqref{intro:lis} is inspired by Kuznetsov's description of homological projective duality for $\Gr(2, 6)$ in~\cite{kuznetsov2006gr2l}. To make a more detailed comparison we introduce the variety $X_2$ as the following resolution of singularities of the Pfaffian variety: it is the variety of pairs
\[
  \{([\omega] \in \P(\Lambda^2 V^\dual), K_2 \in \Gr(2, V))\},
\]
where, like in the definition of $X_1$, $\omega$ is a (degenerate) $2$-form on $V$ and $K_2 \subset V$ is a two-dimensional subspace in the kernel of $\omega$.  Here, just like for $X_1$, it is easy to see that the projection morphism $\tilde{\pi}\colon X_2 \to \Gr(2, V)$ is the projectivization of the second exterior power of the tautological quotient bundle $\tilde{Q}$ of rank $2n-2$ on $\Gr(2, V)$. We define $\O_{X_2}(\tilde{H})$ as the relative ample bundle of $\tilde{\pi}$, and we can use the same formula as in~\eqref{intro:lis} to define the objects $\widetilde{L_i} \in \Dbcoh(X_2)$ for $i \in [0; n-1]$ as cones:
\[
  \widetilde{L_i} := [ \tilde{\pi}^* \Lambda^{2n-i} \tilde{Q} \otimes \O_{X_2}(-(n-i)\tilde{H}) \to \tilde{\pi}^* \Lambda^i \tilde{Q} ].
\]
Note that on $X_2$ we have $\widetilde{L_0} \caniso \O_{X_2}$ and $\widetilde{L_1} \caniso \tilde{Q}$.

In \cite{kuznetsov2006gr2l} our variety $X_2$ is called $\tilde{Y}$, and for $n = 3$ the vector bundles $F_0$, $F_1$ and $F_2$ on~$X_2$ defined there in the equation~(13) correspond in our notation to~$\widetilde{L_2}$,~$\widetilde{L_1}$ and~$\widetilde{L_0}$, respectively. The semiorthogonality proved in \cite[Prop.~5.4]{kuznetsov2006gr2l} in this case is an exact analogue of our Theorem~\ref{intro:lefschetz grouping} stated on~$X_2$ instead of~$X_1$. Moreover, \cite[Th.~4.1]{kuznetsov2006gr2l} confirms the~$n = 3$ case of Conjecture~\ref{intro:main conjecture}, at least when stated on~$X_2$ instead of~$X_1$.

This comparison raises the question: why do we work on~$X_1$ instead of~$X_2$? Starting with~$X_2$, a resolution of singularities for the Pfaffian variety, seems more natural for establishing homological projective duality than working with~$X_1$, which is generically a~$\P^1$-fibration over the Pfaffian.  We discuss this question in more detail in subsections~\ref{ssec:x1 decomposition} and~\ref{ssec:comparison with x2} at the end of the paper, but one basic reason is that some computations are easier to perform on~$X_1$. Additionally, since objects $\widetilde{L_i}$ for $i = 0$, $1$ or $n-1$ are somewhat special (e.g., they are locally free sheaves), generalizing the exceptional collection from the $n = 3$ case in~\cite{kuznetsov2006gr2l} to arbitrary $n$ took some experimentation, and the expected semiorthogonal decomposition of $\Dbcoh(X_1)$ mentioned in Subsection~\ref{ssec:x1 decomposition} was helpful in figuring out plausible formulas by checking the semiorthogonality on the level of~$K_0$. In fact, we conjecture (see Subsection~\ref{ssec:comparison with x2}) that the subcategory~$\widetilde{\mA} \subset \Dbcoh(X_2)$ defined analogously to the subcategory~$\mA \subset \Dbcoh(X_1)$ from Theorem~\ref{intro:lefschetz grouping} is in fact equivalent to~$\mA$.

Let us now briefly recall the approach by Rennemo and Segal in \cite{rennemo-segal}. They study homological projective duality for generalized Pfaffian varieties. For the Grassmannian--Pfaffian duality we are after, they fix an auxiliary vector space~$S$ of dimension~$2n-2$ with a symplectic form~$\omega_S$. Any~$2$-form on~$V$ of rank~$2n-2$ is a pullback of~$\omega_S$ along some surjection~$V \epiarrow S$, and the surjection is defined uniquely up to the action of~$\mathrm{Sp}(S, \omega_S)$. The stack~$\tilde{\mathcal{Y}}$ of all, not necessarily surjective, maps from~$V$ to~$S$ modulo the action of the group~$\mathrm{Sp}(S, \omega_S)$, is the key object. Representations of the group~$\mathrm{Sp}(S, \omega_S)$ define vector bundles on~$\tilde{\mathcal{Y}}$, and the candidate for the HPD category of~$\Gr(2, V)$ is defined in terms of \emph{symplectic Schur functors} (see, e.g., \cite[\S{}17.3]{fulton-harris}) applied to~$S$ with respect to a particular set of Young diagrams. For odd-dimensional~$V$ Rennemo and Segal prove that the candidate category is, in fact, the correct answer. They prove a partial result for even-dimensional $V$ in~\cite{rennemo-segal-addendum}

The relation with our definition is not obvious since descending the results concerning the derived category of the stack $\tilde{\mathcal{Y}}$, which is a quite large category, to results about derived categories of simpler geometric objects such as $X_1$ or $X_2$, is, in general, a hard problem. Let us just mention the fact that the objects $L_i$ defined in~\eqref{intro:lis} are similar to the description in~\mbox{\cite[\S{}17.3]{fulton-harris}} of symplectic Schur functors with respect to single-column Young diagrams. It would be interesting to check whether the subcategory $\mA$ defined in Theorem~\ref{intro:lefschetz grouping} is equivalent to the candidate subcategory defined in~\cite{rennemo-segal}.

\subsection{Structure of the paper}
In Section~\ref{sec:bwb} we recall basic properties of Schur functors and state the Borel--Weil--Bott theorem. In Section~\ref{sec:symmetric algebra exterior square} we consider the algebra $\Sym^\bullet(\Lambda^2 V)$ and some maps between graded modules over it from the $\GL(V)$-representation theory point of view. These algebraic observations are necessary in Section~\ref{sec:lefschetz rectangular by computation}, where we prove the main result of this paper, Theorem~\ref{thm:lefschetz grouping}. Finally, in Section~\ref{sec:conjectures} we discuss what else needs to be proved in order to show that our candidate HPD category $\mA$ is the correct one, as well as some other desirable properties of it.

\subsection{Notation and conventions}
Throughout this paper we work over an algebraically closed field $\k$ of characteristic zero. All categories are assumed to be triangulated, all functors are assumed to be derived. For a vector space $V$ we denote by $\P(V)$ the variety of one-dimensional subspaces of $V$. Our convention for Schur functors is that exterior powers correspond to single-column diagrams.

\subsection{Acknowledgments}
I thank Alexander Kuznetsov for suggesting me to work on this problem, many helpful conversations about homological projective duality, and general encouragement. I also thank Anton Fonarev for useful discussions. The work was supported by the Theoretical Physics and Mathematics Advancement Foundation ``BASIS''.

\section{Reminder on Borel--Weil--Bott}
\label{sec:bwb}

In this paper we perform many cohomology computations on Grassmannian varieties. While we mostly work with the projective space $\P(V)$, it is still more convenient to think of it as a Grassmannian $\Gr(1, V)$. This section is a collection of well-known facts about homogeneous bundles, Schur functors, and the Borel--Weil--Bott theorem. A good reference for these facts is the book \cite{weyman}.

\subsection{Schur functors}
Let $\lambda$ be a Young diagram. We denote by $\Sigma^\lambda(-)$ the \emph{Schur functor} corresponding to the partition $\lambda$ (see, e.g., \cite[Ch.~2]{weyman}). Our convention is the following: if $\lambda$ is a single column of length $k$, then $\Sigma^\lambda(V) = \Lambda^k V$ for any vector space $V$, and if $\lambda$ is a single row of length $k$, then $\Sigma^\lambda(V) = \Sym^k V$. While, strictly speaking, Schur functors are endofunctors of the category of vector spaces, the same definition produces endofunctors of the category of vector bundles on some algebraic variety; in fact, Schur functors act on any~$\k$-linear symmetric monoidal category~\cite[(6.5.1)]{sam2012introduction}. The space $\Sigma^\lambda V$ is zero if and only if the number of rows in $\lambda$ is strictly larger than $\dim V$.

It is often convenient to define Schur functors not only with respect to Young diagrams, but with respect to any non-increasing sequence~$\lambda := (\lambda_1, \ldots, \lambda_k)$ of integers (if you prefer, a dominant weight of the group $\GL(k)$). For the definition, let~$\lambda^\prime$ be the sequence given by~$\lambda^\prime_i := \lambda_i - \lambda_k$. Then~$\lambda^\prime$ is a non-increasing sequence of non-negative integers, which we can interpret as a sequence of lengths of rows for a Young diagram. Then we define~$\Sigma^\lambda(V) := \Sigma^{\lambda^\prime}(V) \otimes \det(V)^{\lambda_k}$.

Let $V$ be a vector space over a field $\k$ (which we always assume to be of characteristic zero). Then for any Young diagram $\lambda$ the space $\Sigma^\lambda V$ is an irreducible representation of the group $\GL(V)$, and, after a twist by a sufficiently large power of~$\det(V)$, any (rational) representation splits into a direct sum~$\oplus_{\lambda} (\Sigma^\lambda V)^{\oplus m_\lambda}$ for some multiplicities~$m_\lambda$ (see, for example, theorems~(2.2.9) and~(2.2.10) in \cite{weyman}). In particular, any tensor product~$\Sigma^\lambda(V) \otimes \Sigma^\mu(V)$ and any composition~$\Sigma^\lambda(\Sigma^\mu(V))$ can be expressed as a direct sum of Schur functors of~$V$. The description of the corresponding multiset of Young diagrams for a tensor product is known as Littlewood--Richardson rule, but we, fortunately, have no need of it beyond the following simpler special case:

\begin{definition}
  \label{def:vertical strip}
  Let $\lambda$ be a Young diagram. For a subdiagram $\mu \subset \lambda$ we say that $\lambda/\mu \in \mathrm{VS}_k$ (or, in words, \emph{is a vertical strip of length $k$}) if the complement to $\mu$ in $\lambda$ consists of $k$ boxes, all in different rows.
\end{definition}

\begin{theorem}[Pieri's formula]
  \label{thm:pieri}
  Let $V$ be a vector space. Let $\mu$ be a Young diagram, and let $k \geq 0$ be an integer. There is an isomorphism
  \[
    \Lambda^k V \otimes \Sigma^\mu V \caniso \bigoplus_{\lambda} \Sigma^\lambda V
  \]
  of~$\GL(V)$-representations, where the right-hand side is a multiplicity-free sum taken over all Young diagrams~$\lambda$ containing~$\mu$ such that~$\lambda/\mu$ is a vertical strip of length~$k$.
\end{theorem}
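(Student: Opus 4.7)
The plan is to reduce the statement to a classical identity of symmetric polynomials and then invoke complete reducibility of polynomial $\GL(V)$-representations in characteristic zero. Since $\GL(V)$ is linearly reductive over $\k$, every polynomial representation decomposes into a direct sum of irreducibles $\Sigma^\lambda V$, and two such representations are isomorphic if and only if their characters on the diagonal maximal torus agree.

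First I would identify the characters. Restricting to the diagonal torus with coordinates $x_1,\ldots,x_n$ for $n = \dim V$, the character of $\Sigma^\mu V$ is the Schur polynomial $s_\mu(x_1,\ldots,x_n)$, while the character of $\Lambda^k V$ is the elementary symmetric polynomial $e_k(x_1,\ldots,x_n)$. Thus the claimed isomorphism is equivalent to the symmetric function identity
\[
  e_k(x_1,\ldots,x_n) \cdot s_\mu(x_1,\ldots,x_n) = \sum_\lambda s_\lambda(x_1,\ldots,x_n),
\]
summed over Young diagrams $\lambda \supset \mu$ with $\lambda/\mu$ a vertical strip of length $k$. Diagrams $\lambda$ with more than $n$ rows contribute zero to the character, consistently with the vanishing $\Sigma^\lambda V = 0$ in that range, so no issue arises from truncating to $n$ variables.

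Second, I would prove this symmetric function identity. The most efficient route uses the dual Jacobi--Trudi determinantal formula $s_\lambda = \det(e_{\lambda'_i - i + j})$, which expresses Schur polynomials in terms of elementary symmetric ones via the conjugate partition $\lambda'$; multiplying $s_\mu$ by $e_k$ corresponds to adjoining a column of $k$ boxes to $\mu'$, and a sign-reversing involution on the non-vertical-strip index configurations produces the desired cancellation. A purely combinatorial alternative uses the tableau description $s_\lambda(x) = \sum_T x^T$ over semistandard Young tableaux of shape $\lambda$ and constructs a weight-preserving bijection between pairs (semistandard tableau of shape $\mu$, strictly increasing length-$k$ column in the alphabet $\{1,\ldots,n\}$) and semistandard tableaux of the relevant shapes $\lambda$. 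The main (and only) obstacle is this combinatorial identity, which is classical and appears as a standard result in symmetric function theory, for instance in~\cite{weyman}; since Pieri's formula is used in this paper purely as a computational input, I would simply invoke the reference rather than reproduce the argument.
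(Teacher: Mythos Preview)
Your proposal is correct and ultimately lands on the same approach as the paper: the paper's proof consists entirely of the citation ``See, for example, \cite[(2.3.5)]{weyman}.'' Your sketch of the character-theoretic reduction to the symmetric-function identity is accurate and adds useful context, but since you conclude by invoking the reference as well, the two treatments coincide.
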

\begin{proof}
  See, for example, \cite[(2.3.5)]{weyman}.
\end{proof}

\begin{theorem}
  \label{thm:plethysm description}
  Let $V$ be a vector space. Let $m \geq 0$ be an integer. Then there is an isomorphism
  \[
    \Sym^m(\Lambda^2 V) \caniso \bigoplus_{\lambda} \Sigma^\lambda V
  \]
  of $\GL(V)$-representations, where the right-hand side is a multiplicity-free sum taken over all Young diagrams~$\lambda$ with $2m$ boxes such that every column of $\lambda$ has even length.
\end{theorem}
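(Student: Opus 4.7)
The plan is to reduce the statement to an equality of formal $\GL(V)$-characters and then invoke a classical identity from the theory of symmetric functions. In characteristic zero, a polynomial $\GL(V)$-representation is determined by its formal character, so writing $x_1, \ldots, x_N$ for the eigenvalues of a diagonal element of $\GL(V)$ (with $N = \dim V$), the character of $\Lambda^2 V$ is $\sum_{i < j} x_i x_j$, and the total character of the symmetric algebra is packaged by the standard generating identity
\[
\sum_{m \geq 0} \mathrm{ch}\bigl(\Sym^m(\Lambda^2 V)\bigr) \;=\; \prod_{1 \leq i < j \leq N} \frac{1}{1 - x_i x_j},
\]
with $\mathrm{ch}(\Sym^m(\Lambda^2 V))$ recovered as the homogeneous component of degree $2m$ in the $x_i$'s.

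Next I would invoke Littlewood's classical identity
\[
\prod_{1 \leq i < j \leq N} \frac{1}{1 - x_i x_j} \;=\; \sum_{\lambda} s_\lambda(x_1, \ldots, x_N),
\]
where $s_\lambda$ is the Schur polynomial and the sum runs over all partitions $\lambda$ whose conjugate $\lambda'$ has only even parts, i.e., every column of $\lambda$ has even length. Since $s_\lambda$ is homogeneous of degree $|\lambda|$, extracting the degree-$2m$ component on both sides matches the theorem and gives the claimed decomposition. This identity is stated and proved in standard references on plethysm; the one closest to the present paper's setup is \cite{weyman}, which we cite rather than reproduce.

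The only nontrivial ingredient is Littlewood's identity itself, and if one wanted a self-contained argument this would be the main obstacle. The standard proofs proceed in one of three ways: an RSK-type bijection between pairs of semistandard tableaux and nonnegative-integer strictly-upper-triangular matrices; dual-basis manipulations using the orthonormality of Schur functions under the Hall inner product on the ring of symmetric functions; or a representation-theoretic derivation via $(\GL(V), \mathfrak{sp})$ Howe duality, viewing $\Sym^\bullet(\Lambda^2 V)$ as the ring of polynomial functions on the space of skew-symmetric bilinear forms. The multiplicity-freeness asserted in the theorem is automatic from Littlewood's identity, since each $s_\lambda$ appears there with coefficient exactly one. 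For the applications to Section~\ref{sec:symmetric algebra exterior square}, any of the three proofs gives the same combinatorial description, and I would not pursue a self-contained derivation here.
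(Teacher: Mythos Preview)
Your proposal is correct. The paper's own proof is simply a one-line citation to \cite[(2.3.8)]{weyman}, so your sketch via characters and Littlewood's identity actually provides more detail than the paper does while ultimately appealing to the same reference.
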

\begin{proof}
  See, for example, \cite[(2.3.8)]{weyman}.
\end{proof}

\subsection{Borel--Weil--Bott}
Let $V$ be a vector space of dimension $N$ and consider the Grassmannian $\Gr(k, V)$. Denote by $U \subset \O_{\Gr(k, V)} \otimes V$ the tautological subbundle, and let $Q$ be the tautological quotient bundle.
It is a well-known fact that any irreducible $\GL(V)$-equivariant vector bundle on~$\Gr(k, V)$ is isomorphic to the tensor product~$\Sigma^\lambda U^\dual \otimes \Sigma^\mu Q^\dual$, where~$\lambda$ is a non-increasing sequence of~$k$ integers and~$\mu$ is a non-increasing sequence of~$N-k$ integers. For example, the bundle $U$ corresponds to a pair $(\lambda, \mu)$, where $\lambda = (0, \ldots, 0, -1)$ and $\mu = 0$. The Borel--Weil--Bott theorem is a fundamental result that describes the cohomology of vector bundles of this form.

\begin{theorem}
  \label{thm:bwb}
  Let $\lambda$ and $\mu$ be non-increasing sequences of integers, of length $k$ and $N-k$, respectively. Let $\alpha$ be the concatenation of $\lambda$ and $\mu$, a sequence of $N$ integers. Denote by $\rho$ the sequence $(N, N-1, \ldots, 1)$ and consider the termwise sum $\alpha + \rho$.
  \begin{itemize}
  \item If the sequence $\alpha + \rho$ is not composed of $N$ distinct integers, then
    \[
      \RGamma(\Sigma^\lambda U^\dual \otimes \Sigma^\mu Q^\dual) = 0.
    \]
  \item If the sequence $\alpha + \rho$ is composed of $N$ distinct integers, let $\sigma \in S_N$ be the unique permutation such that $\sigma(\alpha + \rho)$ is a decreasing sequence. Then
    \[
      H^{i}(\Gr(k, V), \Sigma^\lambda U^\dual \otimes \Sigma^\mu Q^\dual) =
      \begin{cases}
        \Sigma^{\sigma(\alpha + \rho) - \rho} V^\dual, & \text{if $i = l(\sigma)$} \\
        0, & \text{otherwise.}
      \end{cases}
    \]
  \end{itemize}
\end{theorem}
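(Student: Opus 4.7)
The plan is to reduce the statement on the Grassmannian to the corresponding statement on the full flag variety~$\Fl(V)$, and then to prove the latter by induction on Weyl group length. Let $p\colon \Fl(V) \to \Gr(k, V)$ be the natural projection, whose fibers are products of two smaller full flag varieties (of $U$ and of $Q$). Given the non-increasing sequences $\lambda$ and $\mu$, write $\alpha = (\lambda_1, \ldots, \lambda_k, \mu_1, \ldots, \mu_{N-k})$ and let $\mathcal{L}_\alpha$ be the corresponding equivariant line bundle on $\Fl(V)$. The first step is to verify, using the classical Borel--Weil theorem along the fibers of~$p$, that $p_* \mathcal{L}_\alpha \caniso \Sigma^\lambda U^\dual \otimes \Sigma^\mu Q^\dual$ with vanishing higher derived pushforwards, so that the Grassmannian computation follows from the analogous statement on the flag variety via the Leray spectral sequence.

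For the base case on~$\Fl(V)$, when $\alpha$ is weakly decreasing (so $\alpha + \rho$ is strictly decreasing and $\sigma = e$, $\ell(\sigma) = 0$), the classical Borel--Weil theorem gives $H^0(\Fl(V), \mathcal{L}_\alpha) \caniso \Sigma^\alpha V^\dual$ with vanishing higher cohomology. For the inductive step I would use the~$\P^1$-bundle projection $q_i\colon \Fl(V) \to \Fl_{\hat i}(V)$ that forgets the $i$-th subspace in the flag. The fiberwise degree of $\mathcal{L}_\alpha$ is controlled by $(\alpha + \rho)_i - (\alpha + \rho)_{i+1} - 1$. If $(\alpha + \rho)_i = (\alpha + \rho)_{i+1}$ the degree is~$-1$, so $Rq_{i,*} \mathcal{L}_\alpha = 0$ and the whole cohomology vanishes by Leray. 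If instead $(\alpha + \rho)_i < (\alpha + \rho)_{i+1}$, a short computation with the relative Euler sequence (equivalently, relative Serre duality on the~$\P^1$-fibers) yields $Rq_{i,*} \mathcal{L}_\alpha \caniso Rq_{i,*} \mathcal{L}_{s_i \cdot \alpha}[-1]$, where $s_i$ acts via the dotted Weyl action $s_i \cdot \alpha := s_i(\alpha + \rho) - \rho$, and this step strictly decreases the length of the permutation needed to rearrange $\alpha + \rho$ into a decreasing sequence.

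Iterating, after $\ell(\sigma)$ such reductions we either encounter repeated consecutive entries in some intermediate $\alpha + \rho$ (whence the total cohomology vanishes) or we reach the dominant case; the Leray shifts accumulate to give the value $\Sigma^{\sigma(\alpha + \rho) - \rho} V^\dual$ placed in cohomological degree exactly~$\ell(\sigma)$, matching the stated formula.

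The main obstacle in this plan is the combinatorial bookkeeping in the induction. One must check that the minimal-length rearrangement~$\sigma$ of~$\alpha + \rho$ can be realized as a product of simple transpositions each of which strictly decreases Weyl length, so that~$\ell(\sigma)$ simple-reflection steps actually suffice; that the composition of dotted actions of these successive reflections matches the single permutation~$\sigma$; and that if $\alpha + \rho$ has any pair of equal entries (not necessarily adjacent), then the iterated reductions inevitably produce a pair of equal consecutive entries at some stage before the sequence is fully sorted, triggering the vanishing branch. These are purely combinatorial facts about the action of~$S_N$ on integer sequences, but they are where all the subtlety in the final formula is concentrated.
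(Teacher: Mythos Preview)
Your proposal is correct and follows the standard Demazure-style proof of the Borel--Weil--Bott theorem: reduce from the Grassmannian to the full flag variety by pushing forward along the fibers (which are products of smaller flag varieties, so classical Borel--Weil applies there), then argue by induction on the length of the sorting permutation using the $\P^1$-fibrations $q_i$ and the dotted Weyl action. The combinatorial points you flag at the end are the genuine content, but they are standard facts about reduced expressions in $S_N$ and the exchange condition, and your description of them is accurate.

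The paper, by contrast, does not prove this statement at all: it simply cites \cite[Cor.~(4.1.9)]{weyman}. So there is nothing to compare on the level of argument; you have supplied an actual proof where the paper only gives a reference. Your approach is essentially the one carried out in Weyman's book (and in Demazure's short proof), so in that sense you are in agreement with the cited source.
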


Note that the sequence $\alpha + \rho$ contains some integer twice if and only there exists a pair of indices $i$, $j$ such that $\alpha_i = \alpha_j + i - j$, i.e., the following fragments of $\alpha$ imply the acyclicity:
\[
  \begin{aligned}
    (\ldots \, , x, x + 1, \,\, \ldots) & \rightsquigarrow & \text{acyclic} \\
    (\ldots \, , x, *, x + 2, \,\, \ldots) & \rightsquigarrow & \text{acyclic} \\
    (\ldots \, , x, *, *, x + 3, \,\, \ldots) & \rightsquigarrow & \text{acyclic} \\
    & \text{\ldots} &
  \end{aligned}
\]

\begin{proof}
  See, for example, \cite[Cor.~(4.1.9)]{weyman}.
\end{proof}

In this paper we mostly work with $\Gr(1, V) = \P(V)$, for which~$U$ is the line bundle~$\O_{\P(V)}(-1)$. For convenience we record some sufficient conditions for acyclicity of vector bundles in this case.

\begin{corollary}
  \label{cor:bwb acyclicity}
  Let $V$ be a vector space of dimension $N$ and let~$Q$ be the tautological quotient bundle on~$\P(V)$. Let~$\lambda$ be a Young diagram and let~$k > 0$ be a positive integer.
  \begin{itemize}
  \item If $\lambda$ has strictly less than $k$ rows, then $\O_{\P(V)}(-k) \otimes \Sigma^\lambda Q^\dual$ is an acyclic bundle.
  \item If $\lambda$ has strictly less than $N - k$ rows, then $\O_{\P(V)}(-k) \otimes \Sigma^\lambda Q$ is an acyclic bundle.
  \end{itemize}
\end{corollary}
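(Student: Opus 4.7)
The plan is to derive both vanishings directly from the Borel--Weil--Bott theorem (Theorem~\ref{thm:bwb}) by exhibiting a repeated entry in the sequence $\alpha + \rho$ in each case. On $\P(V) = \Gr(1, V)$ the ``$U$-part'' of a weight is a single integer and $\O(-k) = \Sigma^{-k} U^\dual$, so for any bundle of the form $\O(-k) \otimes \Sigma^\mu Q^\dual$ with $\mu$ a non-increasing integer sequence of length $N - 1$, one has
\[
  \alpha + \rho = (N - k,\ \mu_1 + N - 1,\ \mu_2 + N - 2,\ \ldots,\ \mu_{N-1} + 1).
\]
The first entry equals $N - k$ and the $(k{+}1)$-st entry equals $\mu_k + N - k$, so both arguments will reduce to showing that the hypothesis on $\lambda$ forces $\mu_k = 0$, which produces the required collision and yields acyclicity via BWB.

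For the first item the bundle already has the required form: take $\mu$ to be $\lambda$ padded with zeros up to length $N - 1$. The assumption that $\lambda$ has strictly fewer than $k$ rows is then literally $\lambda_k = 0$, i.e.\ $\mu_k = 0$, so positions $1$ and $k{+}1$ of $\alpha + \rho$ coincide. For the second item I would first convert $\Sigma^\lambda Q$ into the form $\Sigma^{\mu} Q^\dual$. Using that the dual of the irreducible $\GL$-representation of highest weight $(\nu_1, \ldots, \nu_{N-1})$ has highest weight $(-\nu_{N-1}, \ldots, -\nu_1)$, one gets
\[
  \Sigma^\lambda Q \caniso \Sigma^{(-\lambda_{N-1},\, -\lambda_{N-2},\, \ldots,\, -\lambda_1)} Q^\dual
\]
with $\lambda$ padded by zeros. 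If $\lambda$ has strictly fewer than $N - k$ rows, then $\lambda_{N-k}, \lambda_{N-k+1}, \ldots, \lambda_{N-1}$ all vanish, so after the reversal the first $k$ entries of $\mu$ are zero; in particular $\mu_k = 0$ and the same collision argument applies.

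I do not anticipate any real obstacle: both statements are direct specializations of Borel--Weil--Bott, corresponding to the very first acyclicity pattern $(\ldots, x, x + 1, \ldots)$ listed below Theorem~\ref{thm:bwb}. The only subtleties are the Schur-duality identity used in the second item and the implicit requirement $k + 1 \leq N$ for the position $k{+}1$ of $\alpha + \rho$ to actually exist; the latter is automatic in the second item from the hypothesis $r < N - k$, and serves as the standing implicit convention in the first.
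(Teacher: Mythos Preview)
Your proposal is correct and follows essentially the same approach as the paper: the first item is a direct application of Borel--Weil--Bott, and the second item is reduced to the first via the duality $\Sigma^{\lambda} Q \caniso \Sigma^{-\lambda} Q^\dual$ with $(-\lambda)_i = -\lambda_{N-i}$, which is exactly the identity the paper invokes. You simply spell out the collision in $\alpha + \rho$ (and the implicit bound $k+1 \le N$) that the paper leaves to the reader.
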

\begin{proof}
  The first claim follows directly from Theorem~\ref{thm:bwb}. For the second claim note that~$\Sigma^{\lambda} Q \caniso \Sigma^{-\lambda} Q^\dual$, where~$-\lambda$ is a sequence of integers defined by~$(-\lambda)_i := -(\lambda_{N - i})$.
\end{proof}

\section{Symmetric algebra on an exterior square}
\label{sec:symmetric algebra exterior square}

Since the main results and conjectures in this paper are closely related to Pffafian varieties, i.e., varieties of degenerate $2$-forms on some vector space, it is not surprising that the second exterior powers of vector spaces (or vector bundles) play the key role. Many computations that we perform later are reduced to some algebraic or combinatorial properties of the symmetric algebra $\Sym^\bullet(\Lambda^2 V)$ for a vector space $V$ and its relation with the exterior algebra~$\Lambda^\bullet V$. In this section we record some facts about these algebras.

\subsection{Young diagrams and multiplicities}
First, we discuss the decomposition of the tensor product $\Lambda^\bullet V \otimes \Sym^\bullet(\Lambda^2 V)$ into a direct sum of irreducible $\GL(V)$-representations. We start with a couple of definitions.

\begin{definition}
  \label{def:mult function}
  Let $\lambda$ be a Young diagram and let $k \geq 0$ be an integer. We define the set~$\diagsl(\lambda, k)$ of subdiagrams of $\lambda$ as follows:
  \[
    \diagsl(\lambda, k) := \{ \mu \subset \lambda \, | \, \mu \, \text{has even columns}, \lambda/\mu \in \mathrm{VS}_k \}.
  \]
  We denote the cardinality of this set by $\multl(\lambda, k) := \#(\diagsl(\lambda, k))$.
\end{definition}

\begin{lemma}
  \label{lem:mult is multiplicity}
  Let $\lambda$ be a Young diagram and let $k \geq 0$ be an integer. Consider the decomposition of the tensor product $\Lambda^k V \otimes \Sym^\bullet(\Lambda^2 V)$ into irrreducible $\GL(V)$-representations. Then the multiplicity of the irreducible representation~$\Sigma^\lambda V$ in this tensor product is equal to~$\multl(\lambda, k)$.
\end{lemma}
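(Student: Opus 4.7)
The plan is to directly compute the multiplicity by combining the two representation-theoretic tools already recorded in this section, namely Pieri's formula (Theorem~\ref{thm:pieri}) and the plethysm decomposition of $\Sym^m(\Lambda^2 V)$ (Theorem~\ref{thm:plethysm description}).

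First I would expand the symmetric algebra as a direct sum of its graded pieces and apply Theorem~\ref{thm:plethysm description} to each piece, obtaining a $\GL(V)$-equivariant isomorphism
\[
  \Sym^\bullet(\Lambda^2 V) \caniso \bigoplus_{\mu} \Sigma^\mu V,
\]
where the sum ranges over all Young diagrams $\mu$ whose columns all have even length (and the grading corresponds to $|\mu|/2$). Note in particular that this sum is multiplicity-free.

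Next I would tensor this with $\Lambda^k V$ and apply Pieri's formula (Theorem~\ref{thm:pieri}) to each summand $\Lambda^k V \otimes \Sigma^\mu V$. The result is
\[
  \Lambda^k V \otimes \Sym^\bullet(\Lambda^2 V) \caniso \bigoplus_{\mu \text{ even cols}} \; \bigoplus_{\substack{\lambda \supset \mu \\ \lambda/\mu \in \mathrm{VS}_k}} \Sigma^\lambda V.
\]
Collecting terms by the outer diagram $\lambda$, the multiplicity of $\Sigma^\lambda V$ on the right is exactly the number of even-column subdiagrams $\mu \subset \lambda$ such that $\lambda/\mu$ is a vertical strip of length $k$. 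By Definition~\ref{def:mult function} this count is $\multl(\lambda, k)$, which proves the lemma.

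There is no real obstacle here: both ingredients are stated in the paper and the argument is essentially a bookkeeping exercise. The only point requiring mild care is to make sure that the double sum may legitimately be reindexed — i.e., that for each $\lambda$ only finitely many $\mu$ contribute — which is automatic because $\mu \subset \lambda$ forces $|\mu| \leq |\lambda|$.
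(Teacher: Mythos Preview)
Your proposal is correct and follows essentially the same argument as the paper: decompose $\Sym^\bullet(\Lambda^2 V)$ into the multiplicity-free sum of $\Sigma^\mu V$ over even-column diagrams via Theorem~\ref{thm:plethysm description}, then apply Pieri's formula (Theorem~\ref{thm:pieri}) and count the subdiagrams $\mu \subset \lambda$ with $\lambda/\mu \in \mathrm{VS}_k$.
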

\begin{proof}
  The $m$'th symmetric power $\Sym^m(\Lambda^2V)$ as a $\GL(V)$-representation splits into a multiplicity-free direct sum of representations $\Sigma^\mu V$ for all Young diagrams $\mu$ of size $2m$ such that all columns of $\mu$ have even length (Theorem~\ref{thm:plethysm description}). Then by the Pieri's formula (Theorem~\ref{thm:pieri}) in the decomposition of $\Lambda^k V \otimes \Sym^m(\Lambda^2 V)$ into irreducible representations the multiplicity of a diagram~$\lambda$ of size $k + 2m$ is equal to the number of subdiagrams~$\mu \subset \lambda$ such that all columns of~$\mu$ have even length and the complement is a vertical strip of length~$k$, i.e., to the number~$\multl(\lambda, k)$.
\end{proof}

The function $\multl(\lambda, -)$ for a fixed diagram $\lambda$ has some nice properties which are not completely immediate from the definition. We prove them in a series of lemmas.

\begin{lemma}
  If for some odd number $m$ the diagram $\lambda$ has more than one column of height~$m$, then~$\multl(\lambda, -) = 0$ for any argument.
\end{lemma}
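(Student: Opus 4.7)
The plan is to argue contrapositively by examining what happens in a pair of adjacent columns of the forbidden odd height. Suppose $\lambda$ has at least two columns of height $m$ with $m$ odd. Since the column lengths of a Young diagram are weakly decreasing, any two columns of equal height $m$ force every column between them to also have height exactly $m$; in particular, we may assume without loss of generality that two \emph{adjacent} columns $c$ and $c+1$ both have height $m$.

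Now fix any $k \geq 0$ and suppose for contradiction that there exists $\mu \in \diagsl(\lambda, k)$, i.e., a subdiagram $\mu \subset \lambda$ with all columns of even length such that $\lambda/\mu$ is a vertical strip. Let $a$ and $b$ denote the lengths of column $c$ and column $c+1$ in $\mu$, respectively. By the even-column hypothesis, both $a$ and $b$ are even, while $a, b \leq m$. Since $m$ is odd, this forces the strict inequalities $a \leq m-1$ and $b \leq m-1$.

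Consequently, in row $m$, both column $c$ and column $c+1$ of $\lambda$ lie in the complement $\lambda/\mu$. This exhibits two boxes of $\lambda/\mu$ in the same row, contradicting the definition of a vertical strip (Definition~\ref{def:vertical strip}). Hence $\diagsl(\lambda, k)$ is empty, so $\multl(\lambda, k) = 0$ for every $k \geq 0$.

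The argument is essentially elementary once one isolates the adjacent-columns reduction; there is no serious obstacle. The only point worth flagging is the initial reduction to adjacent columns, which relies on the standard monotonicity of column heights in a Young diagram rather than on any deeper property of the set $\diagsl$.
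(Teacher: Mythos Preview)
Your argument is correct. The paper's one-line proof makes essentially the same observation but frames it dually: rather than saying that both adjacent columns of height~$m$ must shrink (forcing two removed boxes in row~$m$), the paper notes that the \emph{leftmost} column of height~$m$ cannot shrink at all under removal of a vertical strip (since none of its boxes is rightmost in its row), and hence that column retains its odd length in~$\mu$. Your version uses the even-column hypothesis first and derives a contradiction with the vertical-strip condition; the paper uses the vertical-strip condition first and derives a contradiction with the even-column hypothesis. Both are equally elementary; the paper's phrasing avoids the preliminary reduction to adjacent columns, while yours is perhaps more explicit about where the contradiction lands.
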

\begin{proof}
  In this case removing a vertical strip from $\lambda$ will not change the length of leftmost column of height $m$, hence we will never get a diagram with even columns as the result.
\end{proof}

In this section we will say that a Young diagram $\lambda$ is \emph{mildly odd} if each odd-length column appears in $\lambda$ at most once.

\begin{definition}
  Let $\lambda$ be a Young diagram. Its \emph{height} $\mathrm{ht}(\lambda)$ is the number of non-empty rows in $\lambda$. We define the \emph{profile} of $\lambda$ to be a sequence of positive integers $h(\lambda) = (h_0, \ldots, h_n)$ as follows. Mark the rightmost box in each row of $\lambda$; they form a collection of vertical segments. Let $h_i$ be the length of the $i$'th vertical segment, counting from right to left. Figure~\ref{fig:profile of a diagram} is probably easier to understand than the description in words.
\end{definition}

\begin{figure}
  \label{fig:profile of a diagram}
  \includegraphics{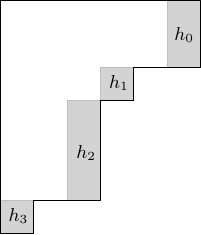}
  \caption{A Young diagram with profile $(2, 1, 3, 1)$}
\end{figure}

\begin{lemma}
  Let $\lambda, \lambda^\prime$ be two mildly odd Young diagrams with the same profile. Then there is a natural bijection between sets
  \[
    \mathrm{diags}_{\Lambda-2\Z}(\lambda, -) \caniso \mathrm{diags}_{\Lambda-2\Z}(\lambda^\prime, -)
  \]
  and hence an equality of numbers
  \[
    \mathrm{mult}_{\Lambda-2\Z}(\lambda, -) = \mathrm{mult}_{\Lambda-2\Z}(\lambda^\prime, -).
  \]
\end{lemma}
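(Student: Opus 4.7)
The plan is to parametrize $\diagsl(\lambda, k)$ by combinatorial data that depends only on the profile of $\lambda$ (using the mildly odd hypothesis), and then to transfer this parametrization between $\lambda$ and $\lambda'$.

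First I would describe vertical strips in terms of the profile. Let $S_0, S_1, \ldots, S_n$ be the row-segments of $\lambda$ from top to bottom, with $S_j$ a block of $h_j$ consecutive rows of common length $L_j$ (so $L_0 > L_1 > \cdots > L_n$). A vertical strip $\lambda/\mu$ removes at most one rightmost box per row; within each $S_j$ the removed rows must form a bottom suffix, because otherwise $\mu$ would fail the weakly-decreasing-row-lengths condition. Thus vertical strips of length $k$ correspond bijectively to tuples $(t_0, \ldots, t_n)$ with $t_j \in \{0, 1, \ldots, h_j\}$ and $t_0 + \cdots + t_n = k$, where $t_j$ records the number of bottom rows of $S_j$ from which a box is removed.

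Next I would read off the column heights of $\mu$. Writing $H_j := h_0 + \cdots + h_j$ and $w_j := L_j - L_{j+1}$ with $L_{n+1} := 0$, segment $S_j$ originally contributes $w_j$ columns all of height $H_j$, and the boxes removed from $S_j$ all lie in the single rightmost of these columns (they are the boxes at column $L_j$). Therefore in $\mu$ the leftmost $w_j - 1$ columns of $S_j$ still have height $H_j$, while the rightmost has height $H_j - t_j$. The condition that $\mu$ have all even columns becomes: (i) $H_j$ is even whenever $w_j \geq 2$, and (ii) $t_j \equiv H_j \pmod{2}$ for every $j$.

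Now the mildly odd hypothesis enters decisively: it says precisely that whenever $H_j$ is odd one has $w_j = 1$, which makes condition (i) automatic for both $\lambda$ and $\lambda'$. Hence $\diagsl(\lambda, k)$ is naturally in bijection with the set
\[
  T(h(\lambda), k) := \bigl\{ (t_0, \ldots, t_n) \,:\, 0 \leq t_j \leq h_j,\ t_j \equiv H_j \!\!\pmod{2},\ t_0 + \cdots + t_n = k \bigr\},
\]
which depends only on the profile. Since $h(\lambda) = h(\lambda')$, the same index set parametrizes $\diagsl(\lambda', k)$, yielding the desired bijection and the equality $\multl(\lambda, k) = \multl(\lambda', k)$. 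The only delicate point is the precise identification of which columns of $\mu$ are affected by the removal; this is a direct calculation on the shape of $\mu$, and I do not expect any conceptual obstacle.
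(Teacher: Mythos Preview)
Your argument is correct and is precisely a detailed unpacking of the paper's own one-line proof, which reads in its entirety: ``This is clear by definition: removing a vertical strip from $\lambda$ depends only on the profile.'' Your parametrization by tuples $(t_0,\ldots,t_n)$ with $0 \le t_j \le h_j$, $t_j \equiv H_j \pmod 2$, $\sum t_j = k$ is exactly what makes that sentence rigorous, and your observation that the mildly odd hypothesis is what renders condition~(i) vacuous is the key point the paper leaves implicit.
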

\begin{proof}
  This is clear by definition: removing a vertical strip from $\lambda$ depends only on the profile.
\end{proof}

Imagine a diagram $\lambda$ whose rightmost column has odd length. Then clearly if we want to remove something from $\lambda$ so that the remaining diagram would have even columns, we have to remove at least one box from the rightmost column. The following lemma builds upon that observation.

\begin{lemma}
  \label{lem:evenization step}
  Let $h(\lambda) = (h_0, \ldots, h_n)$ be a profile of a mildly odd Young diagram. Let $i$ be the smallest integer such that $h_i$ is odd. Let $h^\prime$ be the following sequence of numbers:
  \[
    \begin{split}
      h_0, \ldots, h_{i-1}, h_i - 1, h_{i+1} - 1, h_{i+2}, \ldots, h_n \quad & \text{if $i \neq n$,} \\
      h_0, \ldots, h_{n-1}, h_n - 1 \quad & \text{if $i = n$.}
    \end{split}
  \]
  Then there is an equality
  \[
    \mathrm{mult}_{\Lambda-2\Z}(h, k) = \mathrm{mult}_{\Lambda-2\Z}(h^\prime, k-1).
  \]
\end{lemma}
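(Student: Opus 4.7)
The plan is a bijective proof, based on parametrizing $\diagsl$ via tuples of non-negative integers and then exhibiting an explicit ``subtract one at position $i$'' map.

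First, I would parametrize $\diagsl(\lambda, k)$. Call the $h_j$ rows of $\lambda$ sharing the $j$-th distinct length \emph{level $j$}. A subdiagram $\mu \subset \lambda$ whose complement is a vertical strip is obtained by choosing, for each level independently, how many of its rows to shorten by one; the non-increasing condition on row lengths forces the shortened rows in a given level to form a bottom suffix, and the strict inequality $\ell_j > \ell_{j+1}$ ensures no further constraint couples distinct levels. Hence the data is precisely a tuple $(a_0, \ldots, a_n) \in \prod_j [0, h_j]$, and $|\lambda/\mu| = \sum_j a_j$.

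Next, I would characterize when $\mu$ has only even-length columns. Setting $S_j := h_0 + \cdots + h_j$, a direct inspection shows that the columns of $\mu$ take the value $S_j$ with multiplicity $\ell_j - \ell_{j+1} - 1$ (the interior columns of the block of width-$\ell_j$ columns) and the value $S_j - a_j$ with multiplicity one (the last column of that block, at position $\ell_j$). The mildly odd hypothesis says exactly that $S_j$ odd implies $\ell_j - \ell_{j+1} = 1$, so every $S_j$ that actually appears with positive multiplicity in $\mu$ is automatically even. Hence $\mu$ has even columns iff $a_j \equiv S_j \pmod{2}$ for all $j$, giving $\multl(h, k) = \#\{(a_j) : 0 \leq a_j \leq h_j, \ a_j \equiv S_j \pmod{2}, \ \sum_j a_j = k\}$.

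Finally, I would construct the bijection $\Phi : (a_0, \ldots, a_n) \mapsto (a_0, \ldots, a_i - 1, \ldots, a_n)$. Minimality of $i$ makes $S_0, \ldots, S_{i-1}$ even and $S_i$ odd, so $a_i$ is odd, hence $a_i \geq 1$ and $a_i - 1 \leq h_i - 1 = h'_i$. When $i < n$ there is an additional bound $a_{i+1} \leq h'_{i+1} = h_{i+1} - 1$ to verify: it follows from $a_{i+1} \equiv S_{i+1} \equiv 1 + h_{i+1} \pmod{2}$, which has opposite parity from $h_{i+1}$ and thus forces strict inequality. A short computation of $S'_j$ (equal to $S_j$ for $j < i$, to $S_i - 1$ at $j = i$, and to $S_j - 2$ for $j > i$, with the obvious modification when $i = n$) shows that $(a'_j)$ satisfies the parity conditions for $h'$. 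The total sum is $k - 1$, so the image lies in the target set; the inverse ``add one at position $i$'' admits the dual checks.

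The main difficulty is the second step: one must trace through the column multiset of $\mu$ carefully to see that the mildly odd hypothesis eliminates exactly the parity obstructions coming from interior columns. Once that is in place, the bijection itself is essentially forced, the asymmetric treatment of indices $i$ and $i+1$ being exactly what the parity constraints demand.
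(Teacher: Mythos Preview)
Your proof is correct and is essentially the same bijection as the paper's, presented in a more coordinatized form. The paper argues geometrically---observing that the bottom box of the unique odd-length column must be removed while the topmost removable box of the next column must not be, and then literally deleting those two rows from $\lambda$---whereas you encode the same two constraints as the parity conditions $a_i \equiv S_i$ (forcing $a_i \geq 1$) and $a_{i+1} \equiv S_{i+1} \not\equiv h_{i+1}$ (forcing $a_{i+1} \leq h_{i+1}-1$), after which the map $a_i \mapsto a_i - 1$ is exactly the paper's row deletion read off in the tuple parametrization. Your explicit reduction of ``$\mu$ has even columns'' to the congruences $a_j \equiv S_j \pmod 2$ is a nice clarification that the paper leaves implicit; this is the step where the mildly odd hypothesis is actually consumed, and you isolate it cleanly.
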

\begin{proof}
  For the purpose of this proof it is convenient to think of $\diagsl(\lambda, -)$ as the set of possible vertical strips in $\lambda$ for which the complement has even columns. Let us temporarily call such vertical strips \emph{admissible}. Since $i$ is the smallest integer such that $h_i$ is odd and $\lambda$ is mildly odd, there is a unique column in $\lambda$ of odd length $h_0 + \ldots + h_i$. By definition any admissible vertical strip must contain at least the bottom box of that column.

  Now consider the next largest column of $\lambda$. It has $h_0 + \ldots + h_i + h_{i+1}$ boxes. A vertical strip in $\lambda$ may include up to $h_{i+1}$ boxes from the bottom of this column. However, an admissible vertical strip cannot contain all available $h_{i+1}$ boxes, otherwise the remaining boxes form a column of odd length $h_0 + \ldots + h_i$. This means that an admissible vertical strip does not include the topmost available box in that column, i.e., the box at the level $h_0 + \ldots + h_i + 1$ of that column.

  Thus we have found a row whose rightmost box must be included in any admissible vertical strip, while in the next row the rightmost box must not be included in any admissible strip. This establishes a bijection between admissible vertical strips of size $k$ in $\lambda$ and admissible vertical strips of size $k-1$ in the diagram one gets if we take $\lambda$, find the last box in the column of length $h_0 + \ldots + h_i$, then remove the full row containing it \emph{and} the next row as well; the profile of the resulting diagram is given by the formulas in the statement.
\end{proof}

By construction if $h_i$ is the first odd number in the profile, then any odd numbers in $h^\prime$ have indices strictly bigger than $i$. Hence by repeating the procedure enough times we arrive at a profile that contains only even numbers. We denote the result by $h^{\mathrm{ev}}(\lambda)$. An evident corollary of the lemma above is

\begin{lemma}
  \label{lem:evenization of profile}
  Let $\#\mathrm{oddcolumns}(\lambda)$ be the number of odd-length columns in a mildly odd Young diagram~$\lambda$. Then
  \[
    \mathrm{mult}_{\Lambda-2\Z}(h(\lambda), k) = \mathrm{mult}_{\Lambda-2\Z}(h^{\mathrm{ev}}(\lambda), k - \#\mathrm{oddcolumns}(\lambda)).
  \]
\end{lemma}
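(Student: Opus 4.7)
The plan is to induct on the total count $N := \#\mathrm{oddcolumns}(\lambda)$. The base case $N = 0$ says that every partial sum $H_j := h_0 + h_1 + \cdots + h_j$ is even; equivalently, every $h_j$ is even, so by definition $h^{\mathrm{ev}}(\lambda) = h(\lambda)$ and the claimed identity is trivial. For the inductive step I would apply Lemma~\ref{lem:evenization step} once to produce a profile $h^\prime$ together with the identity
\[
  \multl(h(\lambda), k) \;=\; \multl(h^\prime, k - 1),
\]
and then invoke the inductive hypothesis on $h^\prime$ with parameter $k - 1$ and one fewer odd column. Note that Lemma~\ref{lem:evenization step} is stated intrinsically in terms of profiles, so the iteration stays entirely on the profile side and never needs to revisit a specific diagram $\lambda$.

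The main thing to verify is that the profile $h^\prime$ has exactly one fewer odd partial sum than $h(\lambda)$, so that the induction consumes the correct number of odd columns. Writing $H_j^\prime$ for the partial sums of $h^\prime$, the explicit formulas in Lemma~\ref{lem:evenization step} give $H_j^\prime = H_j$ for $j < i$, $H_i^\prime = H_i - 1$, and $H_j^\prime = H_j - 2$ for $j > i$, where $i$ is the smallest index with $h_i$ odd. By minimality of $i$ the first $i$ partial sums are already even and are unchanged; $H_i$ flips from odd to even; and each remaining partial sum changes by $2$ and retains its parity. So the odd count drops by exactly one per iteration. After $N$ steps we reach a profile with no odd partial sums, which is $h^{\mathrm{ev}}(\lambda)$ by definition, at parameter $k - N = k - \#\mathrm{oddcolumns}(\lambda)$.

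The only minor snag I anticipate is an edge case where an entry of the profile drops to zero during the iteration (for example, if $h_{i+1} = 2$ originally, then a subsequent application of Lemma~\ref{lem:evenization step} forces an entry equal to $0$), in which case $h^\prime$ must be re-read as a shorter sequence obtained by deleting the zero entry and merging the adjacent segments. Since the surviving partial sums and their parities are unaffected by this re-indexing, the parity bookkeeping above is not disturbed, and the induction closes without further complication.
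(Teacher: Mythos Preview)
Your proof is correct and follows the same iterative strategy as the paper: repeatedly apply Lemma~\ref{lem:evenization step} until the profile has only even entries, observing that each application shifts $k$ by one. The paper's own argument is just the sentence ``an evident corollary of the lemma above,'' together with the remark that iterating eventually yields an all-even profile; your partial-sum bookkeeping $H_j^\prime = H_j$, $H_i - 1$, $H_j - 2$ is a clean way to make explicit why the number of iterations is exactly $\#\mathrm{oddcolumns}(\lambda)$, and your handling of the zero-entry edge case is likewise a detail the paper leaves implicit.
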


Let us record some properties of the $\mathrm{mult}_{\Lambda-2\Z}$ function applied to an even profile.

\begin{lemma}
  \label{lem:even profile properties}
  Let $h = (h_0, \ldots, h_n)$ be an even profile. Let $m\colon \Z_{\geq 0} \to \Z_{\geq 0}$ be the function defined by $m(k) = \mathrm{mult}_{\Lambda-2\Z}(h, k)$. Then:
  \begin{enumerate}
  \item $m$ vanishes on any odd argument;
  \item $m$ is symmetric around the argument $(\sum h_i)/2$;
  \item $m$ is non-decreasing on even integers between $0$ and $(\sum h_i)/2$;
  \item If for some $j$ we have $h_j > \sum_{i \neq j} h_i$, then the function $m$ (restricted to even integers) is constant on the segment~$[ \sum_{i \neq j} h_i, h_j ]$.
  \end{enumerate}
\end{lemma}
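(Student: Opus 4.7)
The plan is to reduce all four statements to properties of a single combinatorial generating function. Since $\multl(h, k)$ depends only on the profile $h$, we may compute it on any Young diagram $\lambda$ realizing that profile; fix one such $\lambda$, whose rows partition from top to bottom into segments of sizes $h_0, \ldots, h_n$ with strictly decreasing lengths $L_0 > L_1 > \ldots > L_n$. Because $h$ is even, every column of $\lambda$ has even height.

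To enumerate elements of $\diagsl(\lambda, k)$, I would observe that within segment $i$ every row has length $L_i$, and for $\mu$ to remain a Young diagram the rows we remove from segment $i$ must form a bottom suffix of that segment, say the bottom $k_i$ rows, with $0 \leq k_i \leq h_i$. Each such removal lowers by one the height of the column of $\lambda$ at position $L_i$ and leaves all other columns unchanged; compatibility across adjacent segments is automatic since $L_i - 1 \geq L_{i+1}$. As the column heights of $\lambda$ are all even, $\mu$ then has even columns if and only if every $k_i$ is even, yielding a bijection
\[
    \diagsl(h, k) \;\longleftrightarrow\; \bigl\{(k_0, \ldots, k_n) : 0 \leq k_i \leq h_i,\ k_i \text{ even},\ \textstyle\sum_i k_i = k\bigr\}.
\]
This already gives claim (1). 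Substituting $k_i = 2 j_i$ and writing $A_i := h_i/2$, one identifies $\multl(h, 2s)$ with the coefficient of $x^s$ in
\[
    P(x) := \prod_{i=0}^n \bigl(1 + x + x^2 + \ldots + x^{A_i}\bigr).
\]
Claim (2) is then witnessed by the involution $(j_0, \ldots, j_n) \mapsto (A_0 - j_0, \ldots, A_n - j_n)$, which exchanges tuples of sum $s$ and tuples of sum $\sum A_i - s = (\sum h_i)/2 - s$; equivalently, $P$ is reciprocal of degree $(\sum h_i)/2$.

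For claim (3), I would invoke the classical unimodality of such products: each factor $1 + x + \ldots + x^{A_i}$ is the character of the irreducible $\mathrm{SL}_2$-representation $V_{A_i}$, so $P$ is the character of $\bigotimes_i V_{A_i}$, and decomposing this tensor product into irreducibles exhibits its coefficient sequence as symmetric and unimodal. For claim (4), factor $P(x) = Q(x)\cdot(1 + x + \ldots + x^{A_j})$ where $Q(x) := \prod_{i \neq j}(1 + x + \ldots + x^{A_i})$ has degree $B/2$ with $B := \sum_{i \neq j} h_i$. The coefficient of $x^s$ in $P$ is $\sum_t q_t$ where $t$ ranges over $[\max(0, s - A_j),\, \min(B/2, s)]$; in the stated window $B/2 \leq s \leq A_j$ this reduces to $[0, B/2]$, so the coefficient equals $Q(1) = \prod_{i \neq j}(A_i + 1)$, independent of $s$.

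The substantive step is the initial bijection with tuples $(k_i)$, since that is where the even-profile hypothesis genuinely enters (it is what forces each $k_i$ individually to be even); the subsequent manipulations are routine generating-function arithmetic together with a citation of the unimodality of $P(x)$.
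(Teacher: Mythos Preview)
Your argument is correct and follows essentially the same route as the paper: both first reduce $\multl(h,k)$ for an even profile to counting tuples $(k_0,\ldots,k_n)$ with $k_i$ even, $0\le k_i\le h_i$, and $\sum k_i=k$, then deduce (1) and (2) immediately and interpret the remaining claims via the convolution/product $\prod_i(1+x+\ldots+x^{A_i})$. The only stylistic difference is that for (3) the paper gestures at an induction on the number of factors (convolution of indicator functions of segments preserves symmetric unimodality), whereas you invoke the $\mathrm{SL}_2$-character interpretation; both are standard and equally short, and your treatment of (4) is the explicit version of the paper's ``easy by induction''.
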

\begin{proof}
  It is easy to see from the definition of the $\mathrm{mult}_{\Lambda-2\Z}$ function that for an even profile we have a simple formula:
  \[
    m(k) = \# \{ (y_0, \ldots, y_n) \in \Z^{n+1} \,\, | \,\, y_i \, \text{is even}, 0 \leq y_i \leq h_i, \sum y_i = k \}.
  \]
  Now it's clear that the map $(y_i) \mapsto (h_i - y_i)$ is a bijection between $m(k)$ and $m(\sum h_i - k)$. By dividing everything by $2$ we get that $m(2k)$ is equal to the number of integral points in the hyperplane slice $\sum \tilde{y_i} = k$ of the box
  \[
    [0; h_0/2] \times [0; h_1/2] \times \ldots \times [0; h_n/2].
  \]
  In other words, $m(2k)$ is a convolution of indicator functions of the segments in the formula above. The last two points of the lemma are easy to prove by induction on the number of segments.
\end{proof}

\begin{corollary}
  \label{cor:odd profiles properties}
  Let $\lambda$ be a mildly odd diagram. Then the function $\mathrm{mult}_{\Lambda-2\Z}(\lambda, -)$ has the following properties:
  \begin{enumerate}
  \item It is zero on the segment from $0$ to $\#\mathrm{oddcolumns}(\lambda)$.
  \item It is symmetric with respect to the argument $\left\lceil \mathrm{ht}(\lambda)/2\right\rceil$.
  \item When restricted to the numbers of the same parity as $\#\mathrm{oddcolumns}(\lambda)$, it is nondecreasing on the segment from $0$ to the symmetry point.
  \end{enumerate}
\end{corollary}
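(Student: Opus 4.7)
The plan is to reduce all three claims to the even-profile analysis in Lemma~\ref{lem:even profile properties} via the evenization procedure. Set $c := \#\mathrm{oddcolumns}(\lambda)$ and let $h^{\mathrm{ev}}$ be the resulting even profile obtained by iterating Lemma~\ref{lem:evenization step}. Lemma~\ref{lem:evenization of profile} provides the identity
\[
\multl(\lambda, k) = \multl(h^{\mathrm{ev}}, k - c),
\]
so $\multl(\lambda, -)$ is just the shift by $c$ of the function $m^{\mathrm{ev}}(-) := \multl(h^{\mathrm{ev}}, -)$.

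Claim~(1) is immediate: for $k < c$ the argument $k - c$ is negative, so the set $\diagsl$ is empty. Claim~(3) follows from part~(3) of Lemma~\ref{lem:even profile properties} after shifting: $m^{\mathrm{ev}}$ is nondecreasing on even arguments from $0$ to its symmetry point, and shifting by $c$ turns this into the nondecreasing statement for arguments of parity $c \bmod 2$ starting at $c$. The vanishing from claim~(1) extends the inequality all the way down to $0$.

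Claim~(2) requires identifying the symmetry point $c + (\sum h^{\mathrm{ev}}_i)/2$ with $\lceil \mathrm{ht}(\lambda)/2 \rceil$, which is where the main work sits. A direct check of Lemma~\ref{lem:evenization step} shows that each step decreases the number of odd columns by exactly one, so the iteration ends in precisely $c$ steps. The key observation is that the boundary case $i = n$ can occur at most once, because it only applies when $h_0, \ldots, h_{n-1}$ are already even, and applying it turns $h_n$ even as well, leaving no odd entries and ending the iteration. A parity argument, using that $\sum h^{\mathrm{ev}}_i$ is automatically even and that the total height decrease is $2c$ minus the number of $i = n$ steps, then forces the boundary case to occur once when $\mathrm{ht}(\lambda)$ is odd and never when $\mathrm{ht}(\lambda)$ is even. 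In either case $\sum h^{\mathrm{ev}}_i = 2\lceil \mathrm{ht}(\lambda)/2 \rceil - 2c$, which yields the required symmetry point. The main obstacle is this small combinatorial verification that the $i = n$ case, when it arises, is necessarily the last step of the iteration; once this is pinned down, the corollary drops out from the two preceding lemmas.
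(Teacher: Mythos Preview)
Your argument is correct and follows exactly the route the paper takes: the paper's proof is a one-line reference to Lemmas~\ref{lem:evenization of profile} and~\ref{lem:even profile properties}, with the remark that the ceiling in~(2) comes from the special $i = n$ branch of Lemma~\ref{lem:evenization step}. You have simply unpacked that remark, correctly observing that the $i = n$ step, if it occurs, must be the final one and hence occurs at most once, and then used the parity of $\sum h^{\mathrm{ev}}_i$ to pin down whether it occurs; this yields $\sum h^{\mathrm{ev}}_i = 2\lceil \mathrm{ht}(\lambda)/2 \rceil - 2c$ and hence the claimed symmetry point.
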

\begin{proof}
  Easily follows from Lemmas~\ref{lem:evenization of profile} and~\ref{lem:even profile properties}. The rounding up in point (2) appears due to the special formula for the $i = n$ case in Lemma~\ref{lem:evenization step}.
\end{proof}

\begin{remark}
  One can also state an analogue of the point (4) of Lemma~\ref{lem:even profile properties}: if $\mathrm{ht}(\lambda)$ is an even number, the statement is the same, otherwise one needs to adjust the constants by one depending on whether the number $j$ is the last one or not.
\end{remark}

\subsection{Tautological morphisms}

The following result and its corollaries are indispensable for future computations in Section~\ref{sec:lefschetz rectangular by computation}.

\begin{proposition}
  \label{prop:phi maps}
  Let $0 \leq i \leq n$ be two integers, and let $m \geq 0$ be another one. Consider the following natural transformation of endofunctors of the category of vector spaces:
  \[
    \varphi_{n, i, m}\colon
    \Lambda^{2n - i}(-) \otimes \Sym^m(\Lambda^2(-))
    \to
    \Lambda^i(-) \otimes \Sym^{m + n - i}(\Lambda^2(-))
  \]
  defined using the comultiplication in the exterior algebra
  \[
    \Lambda^{2n-i}(-) \to \Lambda^i(-) \otimes (\Lambda^2(-))^{\otimes (n - i)}
  \]
  and the multiplication
  \[
    (\Lambda^2(-))^{\otimes (n - i)} \otimes \Sym^m(\Lambda^2(-)) \to \Sym^{m + n - i}(\Lambda^2(-))
  \]
  in the symmetric algebra. Then there exists a functorial splitting
  \[
    \ker(\varphi_{n, i, m}) \iso \bigoplus_{\lambda \in K_{n, i, m}} \Sigma^\lambda(-)
  \]
  into a direct sum of Schur functors, where $K_{n, i, m}$ is a multiset of Young diagrams; moreover, all diagrams in $K_{n, i, m}$ have strictly more than $2n$ rows. 
\end{proposition}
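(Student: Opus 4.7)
The plan is to combine $\GL$-equivariance, which gives the Schur-functor splitting for free, with a generic-fiber argument for $\Sym(\Lambda^2 V)$-modules whose payoff is the $\mathfrak{sl}_2$-symmetry of the exterior algebra of a symplectic vector space. Since $\varphi_{n,i,m}$ is built from $\GL$-equivariant operations, for every vector space $V$ the map $\varphi_{n,i,m}(V)$ is $\GL(V)$-equivariant. In characteristic zero the category of polynomial $\GL$-representations is semisimple with irreducibles $\Sigma^\lambda V$, so $\ker(\varphi_{n,i,m})$ canonically splits into a direct sum of Schur functors, which defines the multiset $K_{n,i,m}$. To prove that every $\lambda \in K_{n,i,m}$ has strictly more than $2n$ rows, it suffices, since $\Sigma^\lambda W \neq 0$ whenever $\mathrm{ht}(\lambda) \leq \dim W$, to verify that $\varphi_{n,i,m}(W)$ is injective for a single $W$ with $\dim W = 2n$.

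Fix such a $W$ and assemble the maps $\varphi_{n,i,m}(W)$ for varying $m$ into a single graded $\Sym(\Lambda^2 W)$-linear map
\[
  \Phi \colon \Lambda^{2n-i} W \otimes \Sym^\bullet(\Lambda^2 W) \longrightarrow \Lambda^{i} W \otimes \Sym^\bullet(\Lambda^2 W)
\]
of free modules of equal rank $\binom{2n}{2n-i} = \binom{2n}{i}$. Since $\Sym(\Lambda^2 W)$ is an integral domain and the source is torsion-free, $\Phi$ is injective if and only if its fiber at the generic point of $\Spec \Sym(\Lambda^2 W) = \Lambda^2 W^\dual$ is injective; and by the equality of ranks, generic injectivity is equivalent to a generic isomorphism. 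Unwinding the definitions of the comultiplication and of the symmetrization, the fiber of $\Phi$ at a point $\omega \in \Lambda^2 W^\dual$ coincides, up to a nonzero combinatorial constant, with the iterated contraction $\iota_\omega^{n-i} \colon \Lambda^{2n-i} W \to \Lambda^{i} W$.

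Since $\dim W = 2n$, a generic $\omega \in \Lambda^2 W^\dual$ is a symplectic form. The classical $\mathfrak{sl}_2$-triple on $\Lambda^\bullet W$ given by the raising operator $L = \tilde\omega \wedge (-)$, the lowering operator $F = \iota_\omega$, and the Cartan element acting on $\Lambda^k W$ by $(k - n) \cdot \mathrm{id}$ (where $\tilde\omega \in \Lambda^2 W$ is the bivector dual to $\omega$) turns $\Lambda^\bullet W$ into a finite-dimensional semisimple $\mathfrak{sl}_2$-module. By the standard weight symmetry of finite-dimensional $\mathfrak{sl}_2$-representations, the operator $F^{n-i} = \iota_\omega^{n-i} \colon \Lambda^{2n-i} W \to \Lambda^{i} W$, which maps the weight-$(n-i)$ summand to the weight-$(-(n-i))$ summand, is an isomorphism for every $0 \leq i \leq n$. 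Hence the generic fiber of $\Phi$ is an isomorphism, $\Phi$ itself is injective, each graded piece $\varphi_{n,i,m}(W)$ is injective, and the claim follows.

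The step I expect to be the most delicate is the unwinding that identifies the fiber of $\Phi$ at $\omega$ with the iterated contraction $\iota_\omega^{n-i}$: the comultiplication $\Lambda^{2n-i} \to \Lambda^i \otimes (\Lambda^2)^{\otimes(n-i)}$ and the symmetrization map $(\Lambda^2)^{\otimes(n-i)} \otimes \Sym^m(\Lambda^2) \to \Sym^{m+n-i}(\Lambda^2)$ each carry combinatorial prefactors, and one must track these in order to conclude that their composition evaluates at $\omega$ to a nonzero scalar multiple of $\iota_\omega^{n-i}$ rather than to zero. Once that identification is in hand, the $\mathfrak{sl}_2$-input is formal.
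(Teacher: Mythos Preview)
Your argument is correct and follows essentially the same route as the paper: both reduce the Schur-functor splitting to semisimplicity of polynomial $\GL$-representations, then reduce the height bound to injectivity of $\varphi_{n,i,m}(W)$ for $\dim W = 2n$, and then verify this by checking that the fiber of the associated $\Sym(\Lambda^2 W)$-linear map over a nondegenerate $\omega$ is an isomorphism. The only difference is that where the paper says the generic fiber is an isomorphism ``by direct computation'', you supply a clean justification via the Lefschetz $\mathfrak{sl}_2$-action on $\Lambda^\bullet W$; your caveat about the nonzero combinatorial scalar is well-placed but harmless in characteristic zero.
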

\begin{proof}
  The existence of a functorial splitting into a direct sum of Schur functors is a general fact: the morphism $\varphi_{n, i, m}$ is a natural transformation between two \emph{polynomial functors} of degree~$2m + 2n - i$ (see, e.g., \cite[Ch.~I,~App.~A]{macdonald}). By \cite[I.A~(5.3)]{macdonald} the category of polynomial functors of degree $2m + 2n - i$ is equivalent to the category of representations of the symmetric group $S_{2m + 2n - i}$, so $\varphi_{n, i, m}$ corresponds to a morphism between two representations. The decomposition of the kernel of that morphism into irreducible representations of $S_{2m + 2n - i}$ provides the desired functorial splitting of $\ker(\varphi_{n, i, m})$ into a direct sum of Schur functors.

  It remains to show that all Young diagrams in $K_{n, i, m}$ have strictly more than $2n$ rows. Note that if $V$ is a $2n$-dimensional vector space, then $\Sigma^\lambda(V)$ is a zero vector space if and only if $\lambda$ has more than $2n$ rows. Since the splitting of $\ker(\varphi_{n, i, m})$ is functorial, we can prove the claim about diagrams in $K_{n, i, m}$ by showing that the map of vector spaces we get by evaluating the natural transformation $\varphi_{n, i, m}$ on a $2n$-dimensional vector space $V$ has zero-dimensional kernel. In other words, it remains to prove that the morphism
  \[
    \varphi_{n, i, m}(V)\colon
    \Lambda^{2n - i}V \otimes \Sym^m(\Lambda^2V)
    \to
    \Lambda^iV \otimes \Sym^{m + n - i}(\Lambda^2V)
  \]
  is injective if $\dim V = 2n$.

  We prove the injectivity of $\varphi_{n, i, m}(V)$ by a geometric argument. Let $\A := \Spec \Sym^\bullet(\Lambda^2 V)$ be the vector space $\Lambda^2 V^\dual$ considered as an algebraic variety. A trivial vector bundle with fiber~$\Lambda^2 V^\dual$ over~$\A$ has the tautological global section~$s \in H^0(\A, \Lambda^2 V^\dual \otimes \O_A)$. Consider the map of trivial vector bundles
  \[
    \widetilde{\varphi_{n, i}}\colon \Lambda^{2n-i}V \otimes \O_{\A} \xrightarrow{s^{n-i}} \Lambda^iV \otimes \O_{\A}
  \]
  over that space. Over a general point of $\A$, which corresponds to a nondegenerate $2$-form on~$V$, one checks by direct computation that the map of fibers is injective (in fact, an isomorphism). Thus the kernel of the map~$\widetilde{\varphi_{n, i}}$ is a torsion subsheaf of the vector bundle~$\Lambda^{2n-i}V \otimes \O_A$, which means that the kernel vanishes and~$\widetilde{\varphi_{n, i}}$ is injective as a map of coherent sheaves. This implies that the induced map~$H^0(\widetilde{\varphi_{n, i}})$ on global sections is injective. Since the morphism~$\varphi_{n, i, m}(V)$ is a direct summand of~$H^0(\widetilde{\varphi_{n, i}})$, it is also injective.
\end{proof}

For convenience we record the cases where substituting a vector space into $\varphi_{n, i, m}$ results in an injective morphism.

\begin{corollary}
  \label{cor:injective phi maps}
  Let $0 \leq i \leq n$ be two integers, and let $m \geq 0$ be another. Let $V$ be a vector space with $\dim(V) \leq 2n$. Then the morphism
  \[
    \varphi_{n, i, m}(V)\colon
    \Lambda^{2n - i}V \otimes \Sym^m(\Lambda^2V)
    \to
    \Lambda^iV \otimes \Sym^{m + n - i}(\Lambda^2V)
  \]
  is injective.
\end{corollary}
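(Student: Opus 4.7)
The plan is to obtain this corollary as an immediate consequence of Proposition~\ref{prop:phi maps}. By that proposition, the natural transformation $\varphi_{n,i,m}$ of polynomial functors admits a functorial splitting of its kernel as
\[
  \ker(\varphi_{n,i,m}) \iso \bigoplus_{\lambda \in K_{n,i,m}} \Sigma^\lambda(-),
\]
and, crucially, every Young diagram $\lambda$ appearing in the multiset $K_{n,i,m}$ has strictly more than $2n$ rows. This structural information is exactly what the corollary needs.

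Given a vector space $V$ with $\dim V \leq 2n$, I would just evaluate the functorial splitting at $V$. As recalled in Section~\ref{sec:bwb}, the Schur functor $\Sigma^\lambda V$ vanishes whenever $\lambda$ has more rows than $\dim V$. Since each $\lambda \in K_{n,i,m}$ has more than $2n \geq \dim V$ rows, every summand $\Sigma^\lambda V$ is zero. Therefore $\ker(\varphi_{n,i,m}(V)) = 0$, i.e., $\varphi_{n,i,m}(V)$ is injective.

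There is no real obstacle here: all the substantive work (the functorial decomposition of the kernel and the geometric argument bounding the supports of the $\lambda$) has been carried out in Proposition~\ref{prop:phi maps}; the corollary is simply the specialization of that statement to vector spaces on which the offending Schur functors identically vanish.
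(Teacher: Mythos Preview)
Your proof is correct and matches the paper's own argument essentially verbatim: invoke the functorial splitting of $\ker(\varphi_{n,i,m})$ from Proposition~\ref{prop:phi maps}, observe that every $\lambda \in K_{n,i,m}$ has more than $2n$ rows, and conclude that $\Sigma^\lambda V = 0$ for $\dim V \leq 2n$.
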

\begin{proof}
  If $\lambda$ is a Young diagram with strictly more than $2n$ rows, then $\Sigma^\lambda(V) = 0$. Thus the statement follows from the universal description of $\ker(\varphi_{n, i, m})$ in Proposition~\ref{prop:phi maps}.
\end{proof}

\begin{remark}
  Proposition~\ref{prop:phi maps} and Corollary~\ref{cor:injective phi maps} are stated in terms of the category of vector spaces, but one easily sees that their analogues also hold in the category of vector bundles on some algebraic variety: this can be proved in a down-to-earth fashion, by defining the maps fiberwise and checking that they glue, or instead we can invoke some abstract nonsense about the category of polynomial functors acting on any symmetric monoidal $k$-linear category (see, e.g., \cite[(6.5.1)]{sam2012introduction} or \cite{baez2023schur}).
\end{remark}

\begin{corollary}
  \label{cor:cone of tautological morphism}
  Let $0 \leq i \leq n$ be two integers, and let $m \geq 0$ be another. Let $V$ be a vector space or a vector bundle on some algebraic variety. The cone of the morphism
  \[
    \varphi_{n, i, m}(V)\colon
    \Lambda^{2n - i}V \otimes \Sym^m(\Lambda^2V)
    \to
    \Lambda^iV \otimes \Sym^{m + n - i}(\Lambda^2V)
  \]
  is isomorphic to the direct sum $\ker(\varphi_{n, i, m}(V))[1] \oplus \coker(\varphi_{n, i, m}(V))$. Both the kernel and the cokernel of $\varphi_{n, i, m}(V)$ can be decomposed into direct sums of Schur functors applied to $V$. If $\lambda$ is a Young diagram with $2n$ or $2n-1$ rows, then it cannot appear in the decomposition of the cone of $\varphi_{n, i, m}$ with nonzero multiplicity.
\end{corollary}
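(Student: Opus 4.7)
The plan is to deduce all three assertions from Proposition~\ref{prop:phi maps} combined with the multiplicity analysis developed in Section~\ref{sec:symmetric algebra exterior square}. The splitting of the cone and the existence of Schur decompositions follow formally from the functoriality of $\varphi_{n,i,m}$; the only substantive step is the height bound, which I expect to be the main obstacle.

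For the splitting, $\varphi_{n,i,m}$ is a morphism of polynomial functors of degree $2m + 2n - i$. Via the equivalence of polynomial functors with representations of $S_{2m+2n-i}$, which is a semisimple category in characteristic zero, one obtains functorial decompositions $\Lambda^{2n-i}(-) \otimes \Sym^m(\Lambda^2(-)) \cong \ker(\varphi_{n,i,m}) \oplus \mathcal{I}$ and $\Lambda^i(-) \otimes \Sym^{m+n-i}(\Lambda^2(-)) \cong \mathcal{I} \oplus \coker(\varphi_{n,i,m})$ under which $\varphi_{n,i,m}$ restricts to the identity on $\mathcal{I}$ and to zero on the complementary summand. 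Evaluating on $V$ exhibits $\varphi_{n,i,m}(V)$ as a direct sum (in the arrow category) of an identity map and a zero map, so taking cones componentwise yields $\mathrm{cone}(\varphi_{n,i,m}(V)) \cong \ker(\varphi_{n,i,m}(V))[1] \oplus \coker(\varphi_{n,i,m}(V))$. The kernel decomposes into Schur functors by Proposition~\ref{prop:phi maps}, while the cokernel, being a functorial direct summand of $\Lambda^i(-) \otimes \Sym^{m+n-i}(\Lambda^2(-))$, inherits such a decomposition from Lemma~\ref{lem:mult is multiplicity} via Schur's lemma.

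For the height bound, diagrams in $\ker(\varphi_{n,i,m})$ already have strictly more than $2n$ rows by Proposition~\ref{prop:phi maps}, so only the cokernel requires attention. Fix a Young diagram $\lambda$ with $\mathrm{ht}(\lambda) \in \{2n-1, 2n\}$; by Schur's lemma the restriction of $\varphi_{n,i,m}$ to the $\lambda$-isotypic parts of source and target is given by $M_\lambda \otimes \mathrm{id}_{\Sigma^\lambda(-)}$ for a scalar matrix $M_\lambda$ of dimensions $\multl(\lambda, i) \times \multl(\lambda, 2n-i)$. Since $\lceil \mathrm{ht}(\lambda)/2 \rceil = n$ in both cases and $i + (2n-i) = 2n$, the symmetry statement in Corollary~\ref{cor:odd profiles properties} gives $\multl(\lambda, i) = \multl(\lambda, 2n-i)$, so $M_\lambda$ is square. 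Specializing to a vector space $V_0$ with $\dim V_0 = \mathrm{ht}(\lambda) \leq 2n$, the map $\varphi_{n,i,m}(V_0)$ is injective by Corollary~\ref{cor:injective phi maps}; since $\Sigma^\lambda V_0 \neq 0$ at this dimension, $M_\lambda$ must itself be injective and hence invertible, so $\lambda$ contributes neither to $\ker(\varphi_{n,i,m})$ nor to $\coker(\varphi_{n,i,m})$. The crux is thus the numerical coincidence $\multl(\lambda, i) = \multl(\lambda, 2n-i)$ at the boundary heights, which is precisely what the symmetry analysis of Section~\ref{sec:symmetric algebra exterior square} delivers.
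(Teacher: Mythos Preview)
Your proposal is correct and follows essentially the same route as the paper: semisimplicity of polynomial functors gives the splitting and Schur decompositions, and the height bound is obtained by combining the symmetry $\multl(\lambda,i)=\multl(\lambda,2n-i)$ for $\mathrm{ht}(\lambda)\in\{2n-1,2n\}$ from Corollary~\ref{cor:odd profiles properties} with the vanishing of the kernel at heights $\leq 2n$ from Proposition~\ref{prop:phi maps}. The only cosmetic differences are that you phrase the isotypic argument via an explicit square matrix $M_\lambda$ and invoke Corollary~\ref{cor:injective phi maps} (specialized at $\dim V_0=\mathrm{ht}(\lambda)$) rather than citing Proposition~\ref{prop:phi maps} directly for the kernel vanishing; the logic is the same.
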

\begin{proof}
  As observed in the proof of Proposition~\ref{prop:phi maps}, all claims from the statement but the last one follow from the fact that the morphism $\varphi_{n, i, m}$ is defined universally, in the category of polynomial functors, which is semisimple.

  Let $\lambda$ be a Young diagram. By Lemma~\ref{lem:mult is multiplicity} the multiplicity of the representation $\Sigma^\lambda V$ in the domain and the codomain of $\varphi_{n, i, m}(V)$ is equal to $\multl(\lambda, 2n-i)$ and $\multl(\lambda, i)$, respectively. By Corollary~\ref{cor:odd profiles properties} the function $\multl(\lambda, -)$ is symmetric with respect to the point $\lceil \mathrm{ht}(\lambda)/2 \rceil$, where $\mathrm{ht}(\lambda)$ is the number of rows in $\lambda$.
  Thus we see that if the diagram $\lambda$ has $2n$ or $2n-1$ rows, it appears with the same multiplicity in the domain and the codomain of~$\varphi_{n, i, m}(V)$. This implies that it appears with the same multiplicity in the kernel and the cokernel of~$\varphi_{n, i, m}(V)$. However, we already know by Proposition~\ref{prop:phi maps} that the multiplicity of any~$\lambda$ with~$2n$ or fewer rows in~$\ker(\varphi_{n, i, m}(V))$ is zero. Thus the multiplicity of~$\lambda$ in the cokernel is also zero.
\end{proof}

\begin{remark}
  Let $\lambda$ be a Young diagram and let $V$ be some vector space. We saw in the proof of Corollary~\ref{cor:cone of tautological morphism} that the multiplicity of the representation $\Sigma^\lambda V$ in the domain and codomain of $\varphi_{n, i, m}(V)$ is equal to $\multl(\lambda, 2n-i)$ and $\multl(\lambda, i)$, respectively. If $\lambda$ is not mildly odd, both those numbers are zero, so assume that $\lambda$ is a mildly odd diagram and let $\mathrm{ht}(\lambda)$ be the number of rows in $\lambda$. Then Corollary~\ref{cor:odd profiles properties} describes the behavior of the function $\multl(\lambda, -)$: it is non-decreasing from $0$ up to the symmetry point $\lceil \mathrm{ht}(\lambda)/2 \rceil$ and then it is non-increasing symmetrically. Thus we see that
  \[
    \begin{aligned}
      \mathrm{ht}(\lambda) \leq 2n-2 & \implies & \multl(\lambda, 2n-i) \leq \multl(\lambda, i) \\
      \mathrm{ht}(\lambda) = 2n - 1 \,\, \text{or} \,\, 2n & \implies & \multl(\lambda, 2n - i) = \multl(\lambda, i) \\
      \mathrm{ht}(\lambda) > 2n & \implies & \multl(\lambda, 2n-i) \geq \multl(\lambda, i).
    \end{aligned}
  \]
  Maybe the morphism $\varphi_{n, i, m}$ is maximally cancellative in the sense that a Young diagram cannot appear in its kernel and its cokernel at the same time; if this were the case, not only would we conclude that its kernel is composed of Schur functors with respect to Young diagrams with strictly more than $2n$ rows, but also that its cokernel is composed of Schur functors with respect to Young diagrams with $\leq 2n-2$ rows. Sadly, I don't know how to prove this.
\end{remark}

While Proposition~\ref{prop:phi maps} is concerned only with the kernel of the map $\varphi_{n, i, m}$, we will use some properties of the cokernel as well, at least for small values of $m$.

\begin{lemma}
  \label{lem:coker for small twists}
  Let $0 \leq i \leq n$ be two integers, and let $m \geq 0$ be another one. Let $V$ be a vector space with $\dim(V) \leq 2n$. Consider the morphism
  \[
    \varphi_{n, i, m}(V)\colon
    \Lambda^{2n - i}V \otimes \Sym^m(\Lambda^2V)
    \to
    \Lambda^iV \otimes \Sym^{m + n - i}(\Lambda^2V)
  \]
  defined as in Proposition~\ref{prop:phi maps}. Then the cokernel of $\varphi_{n, i, m}$ can be decomposed into a direct sum~$\oplus \Sigma^\lambda V$ of some Schur functors applied to~$V$, where all diagrams $\lambda$ appearing with non-zero multiplicity have strictly less than $2n - i + m$ rows.
\end{lemma}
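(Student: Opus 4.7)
The plan is to show that every Young diagram $\lambda$ with $\mathrm{ht}(\lambda) \geq 2n-i+m$ appears with zero multiplicity in the Schur functor decomposition of $\coker \varphi_{n,i,m}(V)$ guaranteed by Corollary~\ref{cor:cone of tautological morphism}. Combining Proposition~\ref{prop:phi maps} (which gives that the kernel's $\Sigma^\lambda V$-isotypic component vanishes whenever $\mathrm{ht}(\lambda) \leq 2n$) with Lemma~\ref{lem:mult is multiplicity}, one sees that for such $\lambda$ the multiplicity in the cokernel equals $\multl(\lambda, i) - \multl(\lambda, 2n-i)$. Since $\Sigma^\lambda V = 0$ whenever $\mathrm{ht}(\lambda) > \dim V$ and $\dim V \leq 2n$, it suffices to prove $\multl(\lambda, i) = \multl(\lambda, 2n-i)$ for every Young diagram $\lambda$ with $2n-i+m \leq \mathrm{ht}(\lambda) \leq 2n$. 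If $m > i$ this range is empty, so I may assume $m \leq i$. Setting $h = \mathrm{ht}(\lambda)$, the constraint $|\lambda| = 2m+2n-i$ together with $h \geq 2n-i+m$ yields the excess bound $|\lambda| - h \leq m$. Letting $\lambda'$ be the Young diagram obtained from $\lambda$ by removing its entire leftmost column, this gives $|\lambda'| \leq m$ and $\mathrm{ht}(\lambda') =: t \leq m$.

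The central step of the proof is the recursive formula
\[
  \multl(\lambda, k) = \sum_{\substack{0 \leq a \leq h - t \\ a \equiv h \pmod 2}} \multl(\lambda', k - a),
\]
which I would obtain by uniquely splitting each admissible vertical strip in $\lambda$ into two parts: a suffix of length $a$ in the segment of length-one rows of $\lambda$ (removing $a$ boxes from the leftmost column) and an admissible vertical strip of size $k - a$ inside $\lambda'$. The parity condition $a \equiv h \pmod 2$ encodes that the leftmost column of the complement has even height. I expect this to be the main obstacle: one must check that this splitting is a genuine bijection between admissible strips in $\lambda$ and pairs $(a, \text{strip in } \lambda')$, confirm that the complement is automatically a valid Young diagram, and verify that the ``even columns'' requirement on the complement decomposes exactly as claimed.

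Once the recursion is in hand, comparing $\multl(\lambda, i)$ with $\multl(\lambda, 2n-i)$ becomes routine bookkeeping. Substituting $k' = k - a$ rewrites each side as a sum of $\multl(\lambda', k')$ over an indexing set cut out by range and parity. The inequalities $|\lambda'| \leq m \leq i \leq 2n - i$ (using $i \leq n$) together with $h - t \geq 2n - i$ collapse this range to $\{k' \in [0, |\lambda'|] : k' \equiv i + h \pmod 2\}$ for both values $k = i$ and $k = 2n-i$; the two parity conditions agree because $i \equiv 2n - i \pmod 2$. This identifies the two sums and hence forces $\multl(\lambda, i) = \multl(\lambda, 2n-i)$, concluding the proof.
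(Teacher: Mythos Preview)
Your proposal is correct and takes essentially the same approach as the paper. Both arguments reduce to proving $\multl(\lambda,i)=\multl(\lambda,2n-i)$ for the relevant $\lambda$ by exploiting the fact that the first column of $\lambda$ exceeds the second by at least $2n-i$; the paper packages this as an explicit bijection (``take $2n-2i$ more boxes from the first column''), while your recursive formula $\multl(\lambda,k)=\sum_a \multl(\lambda',k-a)$ amounts to indexing admissible strips by the number $a$ of boxes removed from that long first column and then observing that the substitution $a\mapsto a+(2n-2i)$ identifies the two index sets --- the same bijection in disguise.
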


\begin{remark}
  Proceeding as in the proof of Proposition~\ref{prop:phi maps} we can use this result about the evaluation of the natural transformation $\varphi_{n, i, m}$ on a vector space $V$ of dimension at most $2n$ to deduce the following fact about the functorial description of $\coker(\varphi_{n, i, m})$: all diagrams appearing in the cokernel with nonzero multiplicity either have strictly more than~$2n$ rows, or strictly less than~$2n - i + m$ rows. It is plausible that no diagrams with strictly more than~$2n$ rows actually appear, but I don't know how to prove that.
\end{remark}

\begin{proof}
  First, note that the map $\varphi_{n, i, m}(V)$ is injective by Corollary~\ref{cor:injective phi maps}. Since $\varphi_{n, i, m}$ is a~$\GL(V)$-equivariant injective map, it is enough to prove that if~$\lambda$ is a diagram whose first column has $\geq 2n - i + m$ boxes, then it occurs in the decomposition of the source object and the target object with the same multiplicity. By Lemma~\ref{lem:mult is multiplicity} we need to prove that for any Young diagram $\lambda$ with $2n - i + 2m$ boxes whose first column's length is at least $2n - i + m$, we have an equality $\multl(\lambda, 2n-i) = \multl(\lambda, i)$.

  We explicitly construct a bijection between the two sets $\mathrm{diags}_{\Lambda-2\Z}(\lambda, -)$ for $i$ and $2n-i$. The idea is that the first column of any such $\lambda$ is much longer than the other columns, and within some bounds we can choose to remove or not to remove boxes from the first column independently of other columns. Explicitly, since the first column of~$\lambda$ has length at least~$2n - i + m$ and there are~$2n-i+2m$ boxes, we know that the length of the second column is at most
  \[
    2n-i+2m - (2n - i + m) = m,
  \]
  and hence the first column is longer than the second by at least
  \[
    2n - i + m - m = 2n - i.
  \]
  Thus, given any way in $\mathrm{diags}_{\Lambda-2\Z}(\lambda, i)$ to remove an $i$-long vertical strip from $\lambda$ so that all the columns of the remaining part have even lengths, we can always take $2n-2i$ more boxes from the first column, and this will be a valid Young diagram in $\mathrm{diags}_{\Lambda-2\Z}(\lambda, 2n-i)$; and vice versa, it is easy to check that any way to remove $(2n-i)$-long vertical strip from $\lambda$ requires taking at least $2n - 2i$ boxes from the first column, which we can decide not to remove.
\end{proof}

\begin{remark}
  The last part of the proof is essentially an application of the point (4) of Lemma~\ref{lem:even profile properties}, but since $\lambda$ is not necessarily a diagram with an even profile, giving a direct proof is less annoying than tracking the corrections related to the parity of the number $2n - i + m$.
\end{remark}

\section{Semiorthogonality on $X_1$}
\label{sec:lefschetz rectangular by computation}

In this section we prove the following theorem. We use the notation from Section~\ref{ssec:intro gr2n}.

\begin{theorem}
  \label{thm:lefschetz grouping}
  \begin{enumerate}
  \item The sequence $\langle L_0, L_1, \ldots, L_{n-1} \rangle$ is an exceptional collection on $X_1$.
  \item\label{thm:lefschetz rectangular part} Let $\mA_0$ be the category generated by the sequence above. Then the sequence
    \[
      \langle \mA_0, \mA_0(H), \ldots, \mA_0(k_{\mathrm{max}}H) \rangle
    \]
    is semiorthogonal where $k_{\mathrm{max}} = \binom{2n-1}{2} - 2$.
  \item\ Moreover, that semiorthogonal sequence of subcategories can be extended:
    \begin{equation}
      \label{eq:lefschetz decomposition}
      \langle \mA_0, \mA_0(H), \ldots, \mA_0(k_{\mathrm{max}}H), L_0((k_{\mathrm{max}} + 1)H), \ldots, L_0((k_{\mathrm{max}} + n)H) \rangle
    \end{equation}
    by adding $n$ copies of the twists of $L_0 \caniso \O$. We refer to the subcategory generated by this sequence as $\mA$.
  \end{enumerate}
\end{theorem}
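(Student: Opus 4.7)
The theorem consists of various Ext-vanishings between twists of the $L_i$. The plan is to present each $L_i$ by its defining two-term complex
\[
  L_i = \coker[\pi^* \Lambda^{2n-i} Q \otimes \O(-(n-i)H) \xrightarrow{\omega^{n-i}} \pi^* \Lambda^i Q]
\]
(the injectivity of this map being the subject of Lemma~\ref{lem:li are exceptional}), so that each $\Ext$ between twists is the total cohomology of a bicomplex whose four corners have the form $\RGamma(X_1, \pi^*(\Lambda^a Q \otimes \Lambda^b Q^\dual) \otimes \O(cH))$, and then push down along $\pi\colon X_1 \to \P(V)$ and apply the Borel--Weil--Bott theorem on $\P(V) = \Gr(1, V)$.

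\textbf{Pushdown to $\P(V)$.} Since $\pi$ is the projectivization of the rank-$N$ bundle $\Lambda^2 Q^\dual$ with $N := \binom{2n-1}{2}$, the projection formula combined with fiberwise cohomology gives $R^0\pi_* \O(cH) = \Sym^c(\Lambda^2 Q)$ for $c \geq 0$, $R\pi_* \O(cH) = 0$ for $-N+1 \leq c \leq -1$, and a single term in degree $N-1$ for $c \leq -N$ computed by relative Serre duality. Each corner of the bicomplex therefore becomes the cohomology on $\P(V)$ of a $\GL(V)$-equivariant bundle of the shape $\Lambda^a Q \otimes \Lambda^b Q^\dual \otimes \Sym^{|c|}(\Lambda^2 Q)$ or its Serre-dual analogue. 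Using Pieri's rule (Theorem~\ref{thm:pieri}) and Theorem~\ref{thm:plethysm description} the bundle decomposes into irreducibles $\Sigma^\lambda Q \otimes \Sigma^{(1^b)} Q^\dual$, and Corollary~\ref{cor:bwb acyclicity} decides acyclicity summand by summand. In the majority of cases every corner of the bicomplex becomes acyclic after pushdown and the corresponding Ext space vanishes for free.

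\textbf{Cancellations via $\varphi_{n,i,m}$ --- the main obstacle.} In the remaining cases two corners retain matching cohomological contributions in adjacent bicomplex degrees, and vanishing reduces to showing that the connecting differential between them is an isomorphism. Up to twists and Schur decomposition, this differential is precisely the universal natural transformation $\varphi_{n,i,m}$ of Proposition~\ref{prop:phi maps} specialized to the bundle $Q$ on $\P(V)$. Since $\rk Q = 2n-1 < 2n$, Corollary~\ref{cor:injective phi maps} ensures injectivity, and Proposition~\ref{prop:phi maps} together with Lemma~\ref{lem:coker for small twists} describes the cokernel via the multiplicity function $\multl(\lambda, -)$. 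The actual matching of summands rests on the symmetry $\multl(\lambda, 2n-i) = \multl(\lambda, i)$ from Corollary~\ref{cor:odd profiles properties}, which holds precisely when $\lambda$ has $2n-1$ or $2n$ rows. The bound $k_{\mathrm{max}} = \binom{2n-1}{2} - 2$ in part~(2) is exactly the largest $k$ for which this combinatorial matching succeeds without leaving an uncancelled Schur summand.

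\textbf{Exceptionality and part (3).} Part~(1) is the $k = 0$ instance of the same analysis, with $\Ext^0(L_i, L_i) = \k$ pinned down by tracing the identity through the Schur decomposition. In part~(3), the extra semiorthogonality with the $L_0((k_{\mathrm{max}}+m)H)$ collapses, since $L_0 \caniso \O$, to $R\Gamma(X_1, \O(-kH)) = 0$ for $1 \leq k \leq n-1$ (immediate from the Kodaira range) and $R\Gamma(X_1, L_i(-kH)) = 0$ for $k \in [1, k_{\mathrm{max}} + n]$ and $0 \leq i \leq n-1$. The latter is handled by the same resolution-plus-pushdown approach, with the top-degree Serre-dual pushforward entering for the large values of $k$; the extra $n$ twists beyond $k_{\mathrm{max}}$ are gained because for these values only one of the two terms in the resolution of $L_i$ contributes to the relevant cohomology range, so no cancellation is required and a looser Borel--Weil--Bott estimate suffices.
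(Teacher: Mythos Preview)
Your overall architecture---resolve each $L_i$ by its two-term complex, push forward along $\pi$, and reduce to Borel--Weil--Bott on $\P(V)$ with the maps $\varphi_{n,i,m}$ controlling the nontrivial differentials---is exactly the paper's. But two points in your sketch do not match what actually happens, and the second one is a real gap.

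\textbf{Part (2).} Your description of the cancellation mechanism is slightly off. In the hard case ($k\le j-i$, both pushforwards nonzero) the relevant map is $\varphi_{n,j,j-i-k}(Q)$, and what one needs is not the $\multl$ symmetry at $2n{-}1$ or $2n$ rows but rather Lemma~\ref{lem:coker for small twists}: the cokernel consists of Schur functors with strictly fewer than $2n-i-k$ rows, hence lies in the right orthogonal of $\Lambda^{2n-i}Q$. The symmetry you cite only constrains diagrams with $2n{-}1$ rows and says nothing about diagrams with $2n-i,\dots,2n-2$ rows. Also, you do not say how you handle $i>j$; the paper does this by a Serre-duality trick (replacing $L_j(-kH)$ by $L_j(-k'H)\otimes\O(\delta)$ with $\delta=-H-2h$), which is why every lemma in that subsection is proved twice, once with and once without the $\delta$-twist.

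\textbf{Part (3).} Your explanation for why the extra $n$ twists are ``free'' is wrong. You claim that for $k\in[k_{\max}+1,k_{\max}+n]$ only one term of the resolution of $L_i$ contributes after pushforward, so no cancellation is needed. This is true only for the single value $k=k_{\max}+1=\binom{2n-1}{2}-1$. For $k=\binom{2n-1}{2}+k'$ with $k'\in[0,n-2]$ both twists $-k$ and $-(k+n-i)$ are $\le -\binom{2n-1}{2}$, so both pushforwards are nonzero (case (3) of the fiberwise computation), and a genuine cancellation is required. The paper identifies the resulting map, after the substitution $\Lambda^{2n-i}Q\cong\Lambda^{i-1}Q^\dual\otimes\O(1)$, with the \emph{dual} of $\varphi_{n-1,\,i-1,\,k'}(Q)$---note the shift from $n$ to $n{-}1$. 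Corollary~\ref{cor:cone of tautological morphism} then says the cone avoids diagrams with $2n{-}2$ or $2n{-}3$ rows; since $\rk Q=2n{-}1$, the only potentially dangerous diagrams left have exactly $2n{-}1$ rows, and these are eliminated by a box-count showing their $(2n{-}2)$nd row has exactly one box, which is again a BWB-acyclic configuration. Without this step your argument for part~(3) does not close.
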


The proof is not just a straightforward computation reducing everything to the Borel--Weil--Bott theorem: due to the nature of the objects $L_i$, which are defined as cones of some maps of vector bundles, in the proof of semiorthogonality we end up needing to show that certain morphisms between (reducible) $\GL$-representations are far from being zero, in that they restrict to isomorphisms on certain classes of irreducible summands of those representations. This is done by using the results from Section~\ref{sec:symmetric algebra exterior square}, specifically Corollary~\ref{cor:injective phi maps} and Lemma~\ref{lem:coker for small twists}.

Since the category $\mA$ from Theorem~\ref{thm:lefschetz grouping} is generated by the twists of $L_i$'s by some powers of the line bundle $\O(H)$, to prove the theorem we need to show the vanishing
\begin{equation}
  \label{eq:lefschetz rectangular semiorthogonality}
  \RHom_{X_1}(L_i, L_j(-kH)) = 0
\end{equation}
in three kinds of situations:
\begin{itemize}
\item if $k = 0$ and $n-1 \geq i > j \geq 0$ (this is the first claim of the theorem); or
\item if $k = 1, \ldots, \binom{2n-1}{2}-2$ and $i, j \in [0, n-1]$ are arbitrary (this is the second claim of the theorem).
\item if $k = \binom{2n-1}{2} - 1, \ldots, \binom{2n-1}{2} + n - 2$, when $i = 0$ and $j \in [0; n-1]$ is arbitrary (this is the third claim of the theorem).
\end{itemize}

Before proceeding with the proof, we will discuss some basic properties of the objects~$L_i$ and the morphism~$\pi\colon X_1 \to \P(V)$ in Subsection~\ref{ssec:semiorthogonality basic observations}. After that, we prove these three cases in three subsections, \ref{ssec:lis semiorthogonality}, \ref{ssec:rectangular semiorthogonality} and \ref{ssec:lefschetz tail semiorthogonality}, respectively. 

\subsection{Basic observations}
\label{ssec:semiorthogonality basic observations}

In this subsection we collect a few basic facts about objects~$L_i$ and the variety~$X_1$. Recall that the morphism~$\pi\colon X_1 \to \P(V)$ is the projectivization of the vector bundle~$\Lambda^2 Q^\dual$ on~$\P(V)$, and $\O(H)$ is the relative ample line bundle for the morphism~$\pi$. The line bundle~$\O(H)$ coincides with the pullback of $\O_{\P(\Lambda^2 V^\dual)}(1)$ along the map $p\colon X_1 \to \P(\Lambda^2 V^\dual)$. We also denote by~$\O(h)$ the line bundle $\pi^*\O_{\P(V)}(1)$.

\begin{lemma}
  \label{lem:serre duality on x1}
  The canonical bundle of $X_1$ is $\O(-\binom{2n-1}{2}H - 2h)$.
\end{lemma}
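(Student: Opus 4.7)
The plan is a routine calculation exploiting the projective bundle structure $\pi\colon X_1 \to \P(V)$, which the paper has already identified as the projectivization of $\Lambda^2 Q^\dual$, a bundle of rank $r = \binom{2n-1}{2}$ on $\P(V)$. There are no obstacles here; the only thing to be careful about is matching conventions so that $\O(H)$ is indeed the relative tautological quotient line bundle (rather than its inverse) — and this is already set up correctly: the inclusion $\Lambda^2 Q^\dual \subset \Lambda^2 V^\dual \otimes \O_{\P(V)}$ shows that $\O(H) = p^* \O_{\P(\Lambda^2 V^\dual)}(1)$ restricts fiberwise to the hyperplane class, so in the standard ``sub'' convention $\mathbb{P}(\Lambda^2 Q^\dual) = X_1$ we have $\O_{X_1}(1) = \O(H)$.

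First, from the relative Euler sequence
\[
  0 \to \O_{X_1} \to \pi^*(\Lambda^2 Q^\dual) \otimes \O(H) \to T_\pi \to 0
\]
I take determinants to obtain
\[
  \omega_{X_1/\P(V)} \caniso \O(-rH) \otimes \pi^*\det(\Lambda^2 Q^\dual)^{-1}.
\]

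Next, I evaluate $\det(\Lambda^2 Q^\dual)$. Using the standard identity $\det(\Lambda^2 F) = \det(F)^{\mathrm{rk}(F) - 1}$ applied to $F = Q^\dual$ of rank $2n-1$, together with $\det(Q) = \O_{\P(V)}(1)$ (from the tautological sequence on $\P(V)$), I get $\det(\Lambda^2 Q^\dual) = \det(Q)^{-(2n-2)} = \O_{\P(V)}(-(2n-2))$, hence $\pi^*\det(\Lambda^2 Q^\dual)^{-1} = \O((2n-2)h)$.

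Finally, combining with $\omega_{\P(V)} = \O_{\P(V)}(-2n)$ via $\omega_{X_1} = \omega_{X_1/\P(V)} \otimes \pi^*\omega_{\P(V)}$ gives
\[
  \omega_{X_1} \caniso \O\bigl(-\tbinom{2n-1}{2} H + (2n-2)h - 2nh\bigr) = \O\bigl(-\tbinom{2n-1}{2} H - 2h\bigr),
\]
as claimed.
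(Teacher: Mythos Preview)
Your proof is correct and is precisely the direct computation the paper alludes to: it uses the projective bundle structure $\pi\colon X_1 = \P_{\P(V)}(\Lambda^2 Q^\dual) \to \P(V)$, the relative Euler sequence, and the identities $\det(\Lambda^2 Q^\dual) = \O_{\P(V)}(-(2n-2))$ and $\omega_{\P(V)} = \O_{\P(V)}(-2n)$ to obtain the result. There is nothing to add.
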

\begin{proof}
  Direct computation using the fact that $\pi\colon X_1 \to \P(V)$ is the projectivization of the vector bundle $\Lambda^2 Q^\dual$ of rank $\binom{2n-1}{2}$.
\end{proof}

\begin{lemma}
  \label{lem:pushforwards fiberwise}
  Let $k \in \Z$ be an integer. Then:
  \begin{enumerate}
  \item If $k \geq 0$, then $\pi_*\O(kH) \caniso \Sym^k(\Lambda^2Q)$.
  \item If $k$ is between $-1$ and $-(\binom{2n-1}{2} - 1)$, then $\pi_*\O(kH) = 0$.
  \item If $k = -(\binom{2n-1}{2} + k^\prime)$ with $k^\prime \geq 0$, then
    \[
      \pi_*\O(kH) \caniso \det(\Lambda^2 Q^\dual) \otimes \Sym^{k^\prime}(\Lambda^2 Q^\dual)[-d] \caniso \O(-(2n-2)h) \otimes \Sym^{k^\prime}(\Lambda^2 Q^\dual)[-d],
    \]
    where $d = \dim X_1$.
  \end{enumerate}
\end{lemma}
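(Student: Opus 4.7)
All three parts amount to direct applications of the theory of derived pushforward along a projective bundle, here $\pi\colon X_1 = \P(\Lambda^2 Q^\dual) \to \P(V)$. Let $E := \Lambda^2 Q^\dual$, a rank-$r$ vector bundle on $\P(V)$ with $r = \binom{2n-1}{2}$; the fibres of $\pi$ are copies of $\P^{r-1}$, so the relative dimension is $r-1$. In the classical convention in which $\P(E)$ parametrises one-dimensional subspaces in the fibres of~$E$, the relative Euler sequence
\[
0 \to \O(-H) \to \pi^*E \to T_\pi(-H) \to 0
\]
yields the relative canonical sheaf $\omega_\pi \caniso \O(-rH) \otimes \pi^*\det(E)^\dual$.

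For part~(1), I would invoke the projective bundle formula: for $k \geq 0$, $\pi_*\O(kH) \caniso \Sym^k(E^\dual) = \Sym^k(\Lambda^2 Q)$, with vanishing higher direct images. For part~(2), I would argue fibrewise: the restriction of $\O(kH)$ to each fibre $\pi^{-1}(\ell) \caniso \P^{r-1}$ is $\O_{\P^{r-1}}(k)$, whose cohomology vanishes in every degree for $-r < k < 0$; cohomology and base change then force $R\pi_*\O(kH) = 0$ in this range.

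For part~(3), I would apply relative Grothendieck--Serre duality, which for smooth proper $\pi$ reads $R\pi_*(\mathcal{F}^\dual \otimes \omega_\pi)[r-1] \caniso (R\pi_*\mathcal{F})^\dual$ for a vector bundle $\mathcal{F}$. Writing $k = -(r + k^\prime)$ with $k^\prime \geq 0$ and taking $\mathcal{F} = \O(kH)$, so that $\mathcal{F}^\dual \otimes \omega_\pi \caniso \O(k^\prime H) \otimes \pi^*\det(E)^\dual$, the projection formula combined with part~(1) identifies the left-hand side with $\Sym^{k^\prime}(E^\dual) \otimes \det(E)^\dual\,[r-1]$. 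Dualising back then places $R\pi_*\O(kH)$ in cohomological degree $r-1$ and identifies it with $\Sym^{k^\prime}(\Lambda^2 Q^\dual) \otimes \det(\Lambda^2 Q^\dual)$, as required. The final simplification $\det(\Lambda^2 Q^\dual) \caniso \O(-(2n-2)h)$ follows from the identity $\det(\Lambda^2 W) \caniso (\det W)^{\mathrm{rk}(W) - 1}$ applied to $W = Q^\dual$ of rank $2n-1$, together with $\det Q \caniso \O_{\P(V)}(1)$ from the tautological exact sequence on $\P(V)$. There is no conceptual obstacle; the only thing requiring care is the bookkeeping around the choice of projectivisation convention and the resulting duals and shifts in Grothendieck duality.
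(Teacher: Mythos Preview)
Your argument is correct and is exactly the standard projective-bundle computation the paper is invoking with its one-line proof (``this is a general property of projectivizations of vector bundles''); you have simply spelled it out. One small remark: your duality computation yields the shift $[-(r-1)]$ with $r-1 = \binom{2n-1}{2}-1$ the relative dimension of~$\pi$, which is the correct value, whereas the paper's statement writes $[-d]$ with $d = \dim X_1$ --- this appears to be a slip in the paper (harmless, since the precise shift is never used in the applications), and you should not let your ``as required'' paper over the discrepancy.
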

\begin{proof}
  As above, this is a general property of projectivizations of vector bundles.
\end{proof}

Recall that for an integer $i \in [0; n-1]$ the object $L_i \in \Dbcoh(X_1)$ is defined as the cone of the morphism
\[
  \pi^*\Lambda^{2n - i}Q \otimes \O(-(n-i)H) \to \pi^*\Lambda^i Q.
\]
given by the multiplication with the $(n-i)$'th power of the $2$-form corresponding to the tautological section of $\O(H)$. Note that $L_0$ is by definition isomorphic to $\O_{X_1}$, so it is an exceptional object.

\begin{lemma}
  \label{lem:li are exceptional}
  Let $i \in [1, n-1]$. 
  Then the following properties hold:
  \begin{enumerate}
  \item The objects $\langle \pi^*\Lambda^{2n-i}Q \otimes \O(-(n-i)H), \,\, \pi^*\Lambda^iQ \rangle$ form an exceptional pair.
  \item The graded vector space
    \[
      \RHom(\pi^*\Lambda^{2n-i}Q \otimes \O(-(n-i)H), \pi^*\Lambda^iQ)
    \]
    is one-dimensional, concentrated in degree $0$.
  \item The object $L_i$ is an exceptional sheaf.
  \end{enumerate}
\end{lemma}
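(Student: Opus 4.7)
The plan is to reduce the $\Ext$-computations on $X_1$ to cohomology computations on $\P(V)$ via the projection formula, using the pushforward formulas of Lemma~\ref{lem:pushforwards fiberwise}, and then to apply Pieri's rule (Theorem~\ref{thm:pieri}) together with the Borel--Weil--Bott acyclicity from Corollary~\ref{cor:bwb acyclicity}. Concretely, for vector bundles $E, F$ on $\P(V)$ and $k \in \Z$ one has
\[
  \RHom_{X_1}(\pi^*E, \pi^*F \otimes \O(kH)) \caniso \RGamma(\P(V), E^\vee \otimes F \otimes \pi_*\O(kH)),
\]
and I will repeatedly use the identification $(\Lambda^kQ)^\vee \caniso \Lambda^{2n-1-k}Q \otimes \O(-1)$ coming from $\det Q \caniso \O_{\P(V)}(1)$.

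For part (1), the cross-vanishing is immediate: for $n \geq 2$ the integer $n-i$ lies in $[1, n-1] \subset [1, \binom{2n-1}{2}-1]$, so $\pi_*\O(-(n-i)H) = 0$ by Lemma~\ref{lem:pushforwards fiberwise}, whence $\RHom(\pi^*\Lambda^iQ, \pi^*\Lambda^{2n-i}Q \otimes \O(-(n-i)H)) = 0$. The exceptionality of each $\pi^*\Lambda^jQ$ reduces via $\pi_*\O_{X_1} \caniso \O_{\P(V)}$ to $\End_{\P(V)}(\Lambda^jQ) = \k$, which I would prove by rewriting this as $\RGamma(\P(V), \Lambda^{2n-1-j}Q \otimes \Lambda^jQ \otimes \O(-1))$, expanding by Pieri into Schur summands $\Sigma^\lambda Q$ with $|\lambda| = 2n-1$, and noting that each such summand is acyclic after twisting by $\O(-1)$ unless $\lambda$ has $2n-1$ rows (Corollary~\ref{cor:bwb acyclicity}), forcing $\lambda = (1^{2n-1})$ and $\Sigma^\lambda Q = \O(1)$, which appears with multiplicity one and contributes $\RGamma(\O_{\P(V)}) = \k$.

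For part (2), the same reduction, now using $\pi_*\O((n-i)H) \caniso \Sym^{n-i}(\Lambda^2Q)$, produces
\[
  \RHom(A, B) \caniso \RGamma(\P(V), \Lambda^{i-1}Q \otimes \Lambda^iQ \otimes \Sym^{n-i}(\Lambda^2Q) \otimes \O(-1)).
\]
The total number of boxes is again $2n-1$, so the same acyclicity argument reduces the computation to the multiplicity of $\Sigma^{(1^{2n-1})}$: applying Pieri to the $\Lambda^{i-1}$ factor forces the intermediate diagram to be $(1^{2n-i})$, and Lemma~\ref{lem:mult is multiplicity} identifies the remaining multiplicity as $\multl((1^{2n-i}), i) = 1$. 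Hence $\RHom(A, B) = \k$ in degree zero.

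For part (3), I first check that $L_i$ is a coherent sheaf by verifying that the defining map $A := \pi^*\Lambda^{2n-i}Q \otimes \O(-(n-i)H) \to B := \pi^*\Lambda^iQ$ is injective. Since $A$ is locally free on the smooth integral variety $X_1$, generic injectivity suffices; at a point $([\omega], \ell) \in X_1$ with $\omega$ of maximal rank $2n-2$, the map becomes contraction by $\omega^{n-i}$ on $V/\ell$, and after decomposing $V/\ell$ as $\ker(\omega|_{V/\ell})$ plus a symplectic complement $W$ of dimension $2n-2$ this breaks into two contractions $\Lambda^kW \to \Lambda^{k-2(n-i)}W$, both injective by the standard $\mathrm{sl}_2$-action on $\Lambda^\bullet W$. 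With the resulting short exact sequence $0 \to A \to B \to L_i \to 0$ in hand, exceptionality of $L_i$ follows from a triangle chase: applying $\RHom(-, A)$ gives $\RHom(L_i, A) \caniso \k[-1]$ from the vanishings of part (1); applying $\RHom(-, B)$ gives $\RHom(L_i, B) = 0$ because the connecting map $\RHom(B, B) = \k \to \k = \RHom(A, B)$ is induced by the nonzero defining map of $L_i$ and is therefore an isomorphism of one-dimensional spaces; and finally applying $\RHom(L_i, -)$ yields $\RHom(L_i, L_i) = \k$. The most delicate step, I expect, is the generic-injectivity argument establishing that $L_i$ is a sheaf rather than a two-term complex; the alternative to the Hodge--Lefschetz weight-count would be to appeal directly to Corollary~\ref{cor:injective phi maps} applied to $V/\ell$ and an evaluation argument.
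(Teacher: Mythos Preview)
Your proposal is correct and follows essentially the same route as the paper. The reductions via adjunction and Lemma~\ref{lem:pushforwards fiberwise}, the use of the identity $\Lambda^{2n-i}Q^\dual \caniso \Lambda^{i-1}Q \otimes \O(-1)$, the box-count plus Corollary~\ref{cor:bwb acyclicity} leaving only the column $(1^{2n-1})$, and the generic-fiber injectivity via the splitting $V/\ell \caniso (K_2/\ell) \oplus W$ are all exactly what the paper does. The only cosmetic differences are that the paper phrases the exceptionality of $L_i$ as ``mutation of an exceptional pair'' rather than spelling out the triangle chase, and for the injectivity of $\Lambda^{2n-i}W \to \Lambda^iW$ the paper composes with one further contraction by $\omega$ to reach the hard Lefschetz isomorphism $\Lambda^{(2n-2)-(i-2)}W \isoarrow \Lambda^{i-2}W$, whereas you invoke the $\mathfrak{sl}_2$-action directly; these are the same argument in different clothing.
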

\begin{proof}
  The bundle~$\pi^*\Lambda^iQ$ is exceptional since~$\Lambda^iQ$ is an exceptional bundle on~$\P(V)$ and~$\pi^*$ is a fully faithful functor. Similarly, the bundle $\pi^*\Lambda^{2n - i}Q$ and any its twists are exceptional. To show that they form an exceptional pair, we use the adjunction between $\pi^*$ and $\pi_*$:
  \[
    \begin{aligned}
    \RHom(\pi^*\Lambda^iQ, & \pi^*\Lambda^{2n - i}Q \otimes \O(-(n-i)H)) \caniso \\ & \caniso \RGamma(\P(V), \Lambda^i Q^\dual \otimes \Lambda^{2n-i}Q \otimes \pi_*\O(-(n-i)H)).
    \end{aligned}
  \]
  Here by Lemma~\ref{lem:pushforwards fiberwise} the object $\pi_*\O(-(n-i)H)$ is zero, so everything vanishes and the first claim of the lemma is proved.

  For the second claim, we compute  
  \[
    \begin{aligned}
      \RHom(\pi^*\Lambda^{2n-i}Q & \otimes \O(-(n-i)H), \pi^*\Lambda^iQ) \caniso \\
      & \caniso \RGamma(\P(V), \Lambda^{2n-i}Q^\dual \otimes \Lambda^i Q \otimes \pi_*\O((n-i)H)) \caniso \\
      & \caniso \RGamma(\P(V), \Lambda^{2n-i}Q^\dual \otimes \Lambda^i Q \otimes \Sym^{n-i}(\Lambda^2 Q)) \caniso \\
      & \caniso \RGamma(\P(V), \O(-1) \otimes \Lambda^{i-1}Q \otimes \Lambda^i Q \otimes \Sym^{n-i}(\Lambda^2 Q)),
    \end{aligned}
  \]
  where for the second isomorphism we used Lemma~\ref{lem:pushforwards fiberwise} and the third follows from the linear-algebraic observation that $\Lambda^a Q^\dual \iso \Lambda^{\rk(Q) - a} Q \otimes \det(Q^\dual)$ for any $a$. Consider the decomposition of $\Lambda^{i-1}Q \otimes \Lambda^i Q \otimes \Sym^{n-i}(\Lambda^2 Q)$ into a direct sum of Schur functors $\oplus_\lambda \Sigma^\lambda(Q)$. First, note that all diagrams $\lambda$ appearing in this decomposition have exactly
  \[
    i - 1 + i + 2 (n - i) = 2n - 1
  \]
  boxes, while the acyclicity part of the Borel--Weil--Bott theorem (Corollary~\ref{cor:bwb acyclicity}) implies that the cohomology~$\RGamma(\P(V), \O(-1) \otimes \Sigma^\lambda(Q))$ vanishes if $\lambda$ has strictly less than $2n-1$ rows. Thus the only diagram~$\lambda$ with nonzero contribution to the cohomology is the vertical column of height~$2n - 1$. By the Pieri's formula (Theorem~\ref{thm:pieri}) and the description of plethysm~$\Sym^\bullet(\Lambda^2(-))$ (Theorem~\ref{thm:plethysm description}) it is easy to see that this diagram appears with multiplicity $1$, and the Borel--Weil--Bott theorem~\ref{thm:bwb} then shows that the cohomology is one-dimensional, in degree zero, as expected.

  The first two claims imply that $L_i$ is an exceptional object since it becomes identified with the mutation of an exceptional pair. For the last claim it remains to show that $L_i$ is a sheaf, not a complex. By definition it is enough to prove that the morphism
  \begin{equation}
    \label{eq:definition of Li}
    \pi^*\Lambda^{2n - i}Q \otimes \O(-(n-i)H) \to \pi^*\Lambda^i Q.
  \end{equation}
  is injective. Since this is a map between vector bundles, it is enough to prove the injectivity of the map of fibers on a general point. By the definition of $X_1$ a general point corresponds to a pair: a filtration
  \[
    0 \subset K_1 \subset K_2 \subset V
  \]
  of $V$ by subspaces with $\dim K_1 = 1$ and $\dim K_2 = 2$, together with a nondegenerate $2$-form~$\omega$ on~$V/K_2$. After choosing a splitting $V \iso V/K_2 \oplus K_2$, the map induced by \eqref{eq:definition of Li} on the fibers at that point can be described in these terms as the morphism
  \begin{equation}
    \label{eq:fiberwise map Li on general point}
    \Lambda^{2n - i}(V/K_2 \oplus K_2/K_1) \xrightarrow{\omega^{n-i}} \Lambda^i(V/K_2 \oplus K_2/K_1),
  \end{equation}
  where $\omega$ does not act on the one-dimensional vector space~$K_2/K_1$. Decomposing the exterior powers of a direct sum into sums of exterior powers we see that the map \eqref{eq:fiberwise map Li on general point} splits into a direct sum of two maps: first, the map
  \[
    \Lambda^{2n-i}(V/K_2) \xrightarrow{\omega^{n-i}} \Lambda^i(V/K_2),
  \]
  and this is an injective map since its composition with another substitution of $\omega$ becomes the pairing of complementary exterior powers
  \[
    \Lambda^{2n-2 - (i - 2)} (V/K_2) \xrightarrow{\omega^{n-i + 1}} \Lambda^{i-2}(V/K_2),
  \]
  which is an isomorphism as $\omega$ is a nondegenerate $2$-form on a $(2n-2)$-dimensional vector space~$V/K_2$ (note that $i \leq n-1$ by assumption; for $i=1$ we get an isomorphism of zero vector spaces). The second summand of~\eqref{eq:fiberwise map Li on general point} is the twist by the one-dimensional vector space $K_2/K_1$ of the map
  \[
    \Lambda^{2n-2 - (i - 1)}(V/K_2) \xrightarrow{\omega^{n-i}} \Lambda^{i-1}(V/K_2),
  \]
  which is again a nondegenerate pairing of complementary exterior powers of $V/K_2$.
\end{proof}

Another fact that we use in the computations below is the following identification. It is almost tautological, but I don't know a concise way of explaining it (beyond saying ``it is easy to see''), so the proof is lengthier than it has any right to be.

\begin{lemma}
  \label{lem:lis and tautological morphisms}
  Let $0 \leq i \leq n-1$ be an index for the object $L_i \in \Dbcoh(X_1)$, and let $k \in \Z$ be an integer.
  \begin{enumerate}
  \item If $k \geq n - i$, then by Lemma~\textup{\ref{lem:pushforwards fiberwise}} we have
    \[
      \begin{aligned}
        \pi_*(L_i(kH)) & \caniso [ \Lambda^{2n-i} Q \otimes \pi_*\O((k-(n-i))H) \to \Lambda^{i} Q \otimes \pi_*\O(kH) ] \caniso \\
        & \caniso [ \Lambda^{2n-i} Q \otimes \Sym^{k - n + i}(\Lambda^2 Q) \xrightarrow{\varphi} \Lambda^{i} Q \otimes \Sym^{k}(\Lambda^2 Q) ],
      \end{aligned}
    \]
    and the morphism $\varphi$ is equal to the map $\varphi_{n, i, k - n + i}(Q)$ from Proposition~\textup{\ref{prop:phi maps}}.
  \item If $k = -(\tbinom{2n-1}{2} + k^\prime)$ with $k^\prime \geq 0$, then by Lemma~\textup{\ref{lem:pushforwards fiberwise}} and the linear-algebraic identity~$\Lambda^a Q \caniso \Lambda^{2n - 1 - a} Q^\dual \otimes \O(1)$ the pushforward~$\pi_*(L_i(kH))$ is isomorphic to the object
    \[
      \begin{aligned}
        [
        \Lambda^{2n-i} &  Q \otimes \O(-(2n-2)) \otimes \Sym^{k^\prime + n - i}(\Lambda^2 Q^\dual)
        \to
        \Lambda^i Q \otimes \O(-(2n-2)) \otimes \Sym^{k^\prime}(\Lambda^2 Q^\dual)
        ] \caniso \\
        & \caniso \O(-(2n-3)) \otimes
        [
        \Lambda^{i-1} Q^\dual \otimes \Sym^{k^\prime + n - i}(\Lambda^2 Q^\dual)
        \xrightarrow{\psi}
        \Lambda^{2n - 2 - (i-1)} Q^\dual \otimes \Sym^{k^\prime}(\Lambda^2 Q^\dual)
        ],
      \end{aligned}
    \]
    and the morphism $\psi$ is equal to the dual of the map $\varphi_{n-1, i-1, k^\prime}(Q)$ from Proposition~\textup{\ref{prop:phi maps}}.
  \end{enumerate}
\end{lemma}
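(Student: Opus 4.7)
The strategy is to apply $\pi_*$ (derived) to the two-term distinguished triangle defining $L_i(kH)$, which expresses $\pi_*(L_i(kH))$ as the cone of the morphism obtained from $\omega^{n-i}\colon \pi^*\Lambda^{2n-i}Q \otimes \O((k-(n-i))H) \to \pi^*\Lambda^i Q \otimes \O(kH)$. By the projection formula this reduces the computation of the outer terms to computing $\pi_*\O(jH)$ for $j = k - (n-i)$ and $j = k$, both of which are handled by Lemma~\ref{lem:pushforwards fiberwise} in the two ranges of $k$ considered.

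For part~(1), both values of $j$ are non-negative, so both pushforwards sit in cohomological degree zero and immediately yield the displayed two-term complex. To identify the induced morphism with $\varphi_{n, i, k-n+i}(Q)$, I would trace through the construction of the defining map on $X_1$: it is built from the $(n-i)$-fold product of the tautological section $\omega \in H^0(X_1, \pi^*\Lambda^2 Q^\dual \otimes \O(H))$ combined with the comultiplication $\Lambda^{2n-i}Q \to \Lambda^i Q \otimes (\Lambda^2 Q)^{\otimes(n-i)}$ in the exterior algebra. Under the $(\pi^* \dashv \pi_*)$-adjunction, $\omega$ corresponds to the identity of $\Lambda^2 Q = \pi_*\O(H)$, and iterating this the adjoint of $\omega^{n-i}$ becomes the natural multiplication $(\Lambda^2 Q)^{\otimes(n-i)} \to \Sym^{n-i}(\Lambda^2 Q) = \pi_*\O((n-i)H)$. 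Together with the outer multiplication in the symmetric algebra, this assembles into the definition of $\varphi_{n, i, k-n+i}(Q)$ from Proposition~\ref{prop:phi maps} verbatim.

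For part~(2), both values of $j$ fall in the third regime of Lemma~\ref{lem:pushforwards fiberwise}, so the pushforwards sit in cohomological degree $d = \binom{2n-1}{2}-1$ and introduce the shift $[-d]$ and the common factor $\det(\Lambda^2 Q^\dual) = \O(-(2n-2)h)$, giving the first displayed isomorphism. The linear-algebraic identity $\Lambda^a Q \caniso \Lambda^{2n-1-a}Q^\dual \otimes \O(h)$ then rewrites the outer $\Lambda Q$ factors and shifts the common twist from $\O(-(2n-2)h)$ to $\O(-(2n-3)h)$, producing the second displayed form.

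To identify $\psi$ as the dual of $\varphi_{n-1, i-1, k'}(Q)$, I would use relative Grothendieck duality for $\pi$. Under the Hodge identification, the morphism $\omega^{n-i}$ on $X_1$ becomes a ``wedge by $\omega^{n-i}$'' morphism on the exterior algebra of $Q^\dual$. The Serre dual of this wedge morphism (as a morphism of sheaves on $X_1$) is the contraction-by-$\omega^{n-i}$ morphism on $Q$ tensored with $\omega_\pi = \O(-\binom{2n-1}{2}H + (2n-2)h)$; after the $\omega_\pi$-twist the source and target fall into the positive-twist regime, and the analysis of part~(1) applied with parameters $(n-1, i-1, k')$ identifies its pushforward with $\varphi_{n-1, i-1, k'}(Q)$ tensored with the common factor $\O((2n-3)h)$. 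Grothendieck--Verdier duality then converts this back to $\psi = \varphi_{n-1, i-1, k'}(Q)^\dual$ after dropping the common twist. The main obstacle is precisely the bookkeeping in this last step: tracking simultaneously the Serre duality isomorphism, the Hodge star identification that turns ``contraction'' on $Q$ into ``wedge'' on $Q^\dual$, and the index shifts $n \mapsto n-1$, $i \mapsto i-1$ that arise because $Q$ has rank $2n-1$ and the relevant exterior indices $i-1$ and $2n-1-i$ already sum to $2(n-1)$. As a cross-check, uniqueness of the $\GL(V)$-equivariant morphism between the relevant common Schur summand of source and target, together with non-vanishing of both candidates, suffices to pin down $\psi$ up to the correct normalization.
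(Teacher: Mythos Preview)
Your proposal is correct and follows essentially the same approach as the paper: identify the pushforward of the defining triangle via the projection formula and Lemma~\ref{lem:pushforwards fiberwise}, then match the induced map with $\varphi$ by unwinding the comultiplication--multiplication description; for part~(2) both you and the paper reduce to part~(1) via Grothendieck duality for $\pi$, which accounts for the index shift $(n,i)\mapsto(n-1,i-1)$. The only cosmetic difference is that the paper phrases part~(1) by passing to the affine total space $\Tot_{\P(V)}(\Lambda^2 Q^\dual)$ with its $\mathbb{C}^\star$-action and reading off graded pieces, whereas you argue directly with the $(\pi^*\dashv\pi_*)$-adjunction; these are two packagings of the same computation.
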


\begin{proof}
  To see the compatibility, we first define the ``affine'' versions of $X_1$ and the objects~$L_i$'s as follows. Denote by~$X_{1, \aff}$ the total space~$\Tot_{\P(V)}(\Lambda^2 Q^\dual)$ and let~$\pi_{\aff}\colon X_{1, \aff} \to \P(V)$ be the projection morphism. Then we set $L_{i, \aff}$ to be the cone of the morphism
  \begin{equation}
    \label{eq:affine li}
    \pi_{\aff}^* \Lambda^{2n-i} Q \to \pi_{\aff}^* \Lambda^i Q
  \end{equation}
  of vector bundles on $X_{1, \aff} \caniso \Tot_{\P(V)}(\Lambda^2 Q^\dual)$ given by the multiplication by the $(n-i)$'th power of the $2$-form on $Q$.

  Next, we discuss the relation between $L_i$ and $L_{i, \aff}$, as well as their pushforwards to $\P(V)$ along the maps $\pi$ and $\pi_{\aff}$, respectively. Consider the~$\C^\star$-action by scaling on the vector space~$V$. It induces a natural~$\C^\star$-equivariant structure on the vector bundle~$Q$ on~$\P(V)$, thus on~$\Lambda^2 Q$ as well, which in turn induces a fiberwise~$\C^\star$-action on~$X_{1, \aff}$.
  Moreover, both pullbacks~$\pi_{\aff}^* \Lambda^{2n-i} Q$ and~$\pi_{\aff}^* \Lambda^i Q$ on~$X_1$ can be equipped with the natural~$\C^\star$-equivariant structure and the morphism~\eqref{eq:affine li} is~$\C^\star$-invariant. By definition of the projectivization we know that a global section of, say, some twist of the bundle~$\pi^* \Lambda^i Q$ by a positive power of the relative ample divisor on~$X_1 := \P_{\P(V)}(\Lambda^2 Q^\dual)$ is the same thing as a~$\C^\star$-equivariant global section (with some specific weight) of the restriction of the bundle~$\pi_{\aff}^* \Lambda^i Q$ to the punctured variety~$X_{1, \aff} \setminus \P(V)$, where we remove the zero section from the total space of the vector bundle. Note that since $X_{1, \aff}$ is a smooth variety and the zero section $\P(V) \subset X_{1, \aff}$ is not a divisor, a global section of $\pi_{\aff}^* \Lambda^i Q$ on the punctured total space always extends to a global section on the whole $X_1$ by Hartogs's extension theorem. Finally, note that the same holds not only for global sections, but also on each fiber of the projection to $\P(V)$, separately.

  In other words, what we have established above is that for $k \geq n-i$ the morphism
  \begin{equation}
    \label{eq:nonaffine li pushforward}
    \Lambda^{2n-i} Q \otimes \pi_*\O((k - (n-i))H) \to \Lambda^i Q \otimes \pi_*\O(kH)
  \end{equation}
  whose cone is isomorphic to the pushforward $\pi_*(L_i(kH))$ is a direct summand of the morphism on $\P(V)$ that we get by pushing forward the map~\eqref{eq:affine li} along the affine morphism $\pi_{\aff}$:
  \begin{equation}
    \label{eq:affine li pushforward}
    \Lambda^{2n-i} Q \otimes \pi_{\aff, *}(\O_{X_{1, \aff}}) \to \Lambda^i Q \otimes \pi_{\aff, *}(\O_{X_{1, \aff}}).
  \end{equation}
  In fact, the map~\eqref{eq:nonaffine li pushforward} coincides with one of the summands in the decomposition of the morphism~\eqref{eq:affine li pushforward} with respect to the weights of the $\C^\star$-action.

  The variety $X_{1, \aff} := \Tot_{\P(V)}(\Lambda^2 Q^\dual)$ is by definition a relative spectrum of the graded algebra object $\Sym^\bullet(\Lambda^2 Q)$ on $\P(V)$. Coherent sheaves on $X_{1, \aff}$ correspond to module objects over that algebra, and the correspondence is realized by the pushforward along the projection morphism $\pi_{\aff}$. Thus the pushforward of the map of pullbacks~\eqref{eq:affine li} corresponds to some morphism of free (graded) modules
  \[
    \Lambda^{2n - i} Q \otimes \Sym^{\bullet - (n-i)}(\Lambda^2 Q) \to \Lambda^i Q \otimes \Sym^{\bullet}(\Lambda^2 Q).
  \]
  By looking at a single fiber of the projection map $\Tot_{\P(V)}(\Lambda^2 Q^\dual) \to \P(V)$ separately it is easy to check that the map of free modules corresponding to the tautological multiplication by the $(n-i)$'th power of the $2$-form on $Q$ is the application of the comultiplication
  \[
    \Lambda^{2n-i} Q \to \Lambda^i Q \otimes (\Lambda^2 Q)^{\otimes (n-i)}
  \]
  in the exterior algebra and then the multiplication in the symmetric algebra $\Sym^{\bullet}(\Lambda^2 Q)$. So this morphism coincides with the direct sum of maps $\varphi_{n, i, m}(Q)$ defined in Proposition~\ref{prop:phi maps} over all values of the integer $m$. And this is exactly what we needed to prove for the first part of the lemma.

  For the second part, note that the dual to the object $\pi_*(L_i(kH))$ is isomorphic to
  \[
    \begin{split}
      \pi_*(L_i(kH))^\dual & := \intRHom_{\P(V)}(\pi_*(L_i(kH)), \O_{\P(V)}) \caniso \pi_* \intRHom_{X_1}(L_i(kH), \pi^! \O_{\P(V)}) \caniso \\
      & \caniso \pi_*(L_i^\dual \otimes \O(-kH) \otimes \omega_{X_1} \otimes \omega_{\P(V)}^\dual) \caniso \\
      & \caniso \pi_*(L_i^\dual(-(k + \tbinom{2n-1}{2})H)) \otimes \O((2n-2)h).
    \end{split}
  \]
  where we used Grothendieck duality for the proper morphism $\pi$ for the first isomorphism and Lemma~\ref{lem:serre duality on x1} for the second one. Here the twist $-(k + \tbinom{2n-1}{2})$ is a non-negative number by assumption. Now we can use an argument analogous to the first part to compute the dual to the object $\pi_*(L_i(kH))$ since we only need to compute the pushforward of objects which fiberwise have no higher cohomology.
\end{proof}

\subsection{Semiorthogonality of $L_i$'s}
\label{ssec:lis semiorthogonality}

In this subsection we prove that the sequence
\[
  \langle L_0, \ldots, L_{n-1} \rangle
\]
is an exceptional collection in~$\Dbcoh(X_1)$. In fact, we prove a somewhat stronger statement: each $L_i$ is defined as a cone of a morphism between two vector bundles, and we show that for~$i < j$ the vector bundles used to define~$L_i$ are pairwise right-orthogonal to the vector bundles used to define~$L_j$. This is a basic computation using Borel--Weil--Bott theorem (Theorem~\ref{thm:bwb}). We use the following combinatorial lemma.

\begin{lemma}
  \label{lem:orthogonal to exterior powers}
  Let $0 \leq m \leq \rk(Q)$ be a number, and let $\lambda$ be a Young diagram with strictly less than~$m$ rows. Then~$\RHom_{\P(V)}(\Lambda^m Q, \Sigma^{\lambda}(Q)) = 0$. Moreover, for any number $m^\prime \geq 0$ we have the vanishing
  \[
    \RHom_{\P(V)}(\Lambda^{m+m^\prime}Q \otimes (\det Q)^{\otimes m^\prime}, \Sigma^{\lambda}(Q)) = 0.
  \]
\end{lemma}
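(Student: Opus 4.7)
The plan is to reduce both statements to the acyclicity criterion in Corollary~\ref{cor:bwb acyclicity} via Pieri's formula. The crucial identity is $\Lambda^m Q^\dual \cong \Lambda^{\rk Q - m} Q \otimes \det Q^\dual$, which on $\P(V) = \Gr(1, V)$ (where $\rk Q = N - 1$ for $N = \dim V$ and $\det Q \cong \O_{\P(V)}(1)$) becomes
\[
  \Lambda^m Q^\dual \cong \Lambda^{N-1-m} Q \otimes \O(-1).
\]

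First I would use adjunction to rewrite $\RHom(\Lambda^m Q, \Sigma^\lambda Q) \cong \RGamma(\P(V), \Lambda^m Q^\dual \otimes \Sigma^\lambda Q)$, and then apply the identity above, so that the problem becomes the vanishing of $\RGamma(\P(V), \O(-1) \otimes \Lambda^{N-1-m} Q \otimes \Sigma^\lambda Q)$. Pieri's formula (Theorem~\ref{thm:pieri}) decomposes $\Lambda^{N-1-m} Q \otimes \Sigma^\lambda Q$ as a direct sum $\bigoplus_\mu \Sigma^\mu Q$ over Young diagrams $\mu$ obtained from $\lambda$ by adjoining a vertical strip of length $N-1-m$. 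Since a vertical strip contains at most one box per row and $\lambda$ has at most $m-1$ rows, each such $\mu$ has at most $(m-1) + (N-1-m) = N - 2$ rows. Then the second bullet of Corollary~\ref{cor:bwb acyclicity} with $k = 1$ kills every summand $\O(-1) \otimes \Sigma^\mu Q$, giving the first claim.

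For the second claim I would repeat the same argument with the extra twist $(\det Q)^{\otimes m'} \cong \O(m')$: it contributes $\O(-m')$ after dualizing, so the computation becomes $\RGamma(\O(-1-m') \otimes \Lambda^{N-1-m-m'}Q \otimes \Sigma^\lambda Q)$ (if $m + m' > N - 1$ the exterior power already vanishes and there is nothing to prove). Pieri now produces diagrams $\mu$ with at most $(m-1) + (N-1-m-m') = N - 2 - m'$ rows, still strictly less than $N - (1+m')$, so Corollary~\ref{cor:bwb acyclicity} with $k = 1 + m'$ finishes the proof.

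I do not expect a genuine obstacle here; the argument is essentially one line of representation theory followed by one application of BWB acyclicity. The only point deserving some care is the alignment of the two strict inequalities: the hypothesis ``$\lambda$ has strictly less than $m$ rows'' leaves exactly one row of slack, which is precisely the slack consumed by the twist $\O(-1)$ coming from $\det Q^\dual$ in the acyclicity bound of Corollary~\ref{cor:bwb acyclicity}.
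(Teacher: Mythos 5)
Your argument is correct and is essentially identical to the paper's own proof: adjunction, the identity $\Lambda^m Q^\dual \cong \det Q^\dual \otimes \Lambda^{\rk Q - m} Q$, Pieri's formula to bound the number of rows of the resulting diagrams by $(m-1)+(\rk Q - m)$, and then the acyclicity criterion of Corollary~\ref{cor:bwb acyclicity}. The paper treats the $m'>0$ case by simply noting it is ``very similar,'' which is exactly the twist-by-$\O(-m')$ bookkeeping you carry out explicitly.
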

\begin{proof}
  For $m^\prime = 0$ note that
  \[
    \RHom(\Lambda^m Q, \Sigma^{\lambda}(Q)) \caniso \RGamma(\Lambda^m Q^\dual \otimes \Sigma^\lambda Q) \caniso \RGamma(\det Q^\dual \otimes \Lambda^{2n-1-m}Q \otimes \Sigma^\lambda Q).
  \]
  The tensor product $\Lambda^{2n-1-m}Q \otimes \Sigma^\lambda Q$ can be decomposed into a direct sum of Schur functors of $Q$, of some weights. By Corollary~\ref{cor:bwb acyclicity} we know that if a diagram $\mu$ has strictly less than~$2n-1$ rows, then~$\RGamma(\det Q^\dual \otimes \Sigma^{\mu}(Q)) = 0$. However, since $\lambda$ is assumed to have $< m$ rows, by the Pieri rule any Young diagrams occuring in the decomposition has $< 2n-1$ rows, and the lemma follows. The case of nonzero $m^\prime$ is very similar.
\end{proof}

\begin{remark}
  In this subsection we only need the $m^\prime = 0$ case of the lemma. The more general statement is useful later in the paper.
\end{remark}

Now we can proceed with the proof.

\begin{lemma}
  \label{lem:lis semiorthogonality}
  Let $0 \leq i < j \leq n-1$ be two indices. Then $\RHom_{X_1}(L_j, L_i) = 0$.
\end{lemma}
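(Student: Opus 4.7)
The plan is to reduce the problem to four separate vanishings. Since $L_i$ and $L_j$ are both sheaves by Lemma~\ref{lem:li are exceptional}, each fits in a short exact sequence
\[
0 \to A_k \to B_k \to L_k \to 0,
\qquad
A_k := \pi^*\Lambda^{2n-k} Q \otimes \O(-(n-k)H),
\qquad
B_k := \pi^*\Lambda^k Q.
\]
Applying $\RHom(-, L_i)$ to the triangle defining $L_j$ and then $\RHom(B_j, -)$, $\RHom(A_j, -)$ to the triangle defining $L_i$, one sees that it suffices to show the vanishing of the four pieces $\RHom(B_j, B_i)$, $\RHom(B_j, A_i)$, $\RHom(A_j, B_i)$, $\RHom(A_j, A_i)$. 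I compute each by pulling the $\O(H)$-twists across the $\RHom$ and then using the $\pi^* \dashv \pi_*$ adjunction together with the projection formula to reduce to cohomology on $\P(V)$.

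In pieces (2) and (4) the relevant pushforward is $\pi_*\O(-kH)$ with $k = n - i$ and $k = j - i$ respectively, so $k$ is an integer in $[1, n]$. A direct estimate shows $n \leq \binom{2n-1}{2} - 1$ for every $n \geq 2$ (and $n \geq 2$ is forced by the hypothesis $i < j$), so $k$ lies in the dead zone of Lemma~\ref{lem:pushforwards fiberwise} and both pieces vanish. A small but crucial point is that the net twist in case (4) comes out to $\O(-(j-i)H)$ with $j - i > 0$, i.e.\ negative, since extracting a twist from the first argument of $\RHom$ inverts it.

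In pieces (1) and (3) the pushforwards are $\O_{\P(V)}$ and $\pi_*\O((n-j)H) \cong \Sym^{n-j}(\Lambda^2 Q)$, so after adjunction each piece becomes $\RHom_{\P(V)}(\Lambda^m Q, R)$ with $(m, R) = (j, \Lambda^i Q)$ in case (1) and $(m, R) = (2n - j, \Lambda^i Q \otimes \Sym^{n-j}(\Lambda^2 Q))$ in case (3). I apply Lemma~\ref{lem:orthogonal to exterior powers} to both. In case (1) the Schur diagram of $R$ is a single column of height $i < j = m$, so the lemma applies immediately. In case (3) I decompose $R$ into Schur functors $\Sigma^\lambda Q$ via Pieri's rule and the plethysm description (Theorems~\ref{thm:pieri} and~\ref{thm:plethysm description}); each $\lambda$ has $i + 2(n-j) = 2n + i - 2j$ boxes, which is strictly less than $2n - j = m$ precisely because $i < j$, so every $\lambda$ has fewer than $m$ rows and the lemma gives zero. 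The main obstacle is purely clerical: the estimate $i + 2(n-j) < 2n - j$ in case (3) is a tight reformulation of $i < j$, and signs when moving twists across $\RHom$ must be tracked carefully.
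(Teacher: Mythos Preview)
Your proof is correct and follows essentially the same approach as the paper: reduce to the four pairwise $\RHom$-spaces between the constituent bundles, kill two of them using the vanishing of $\pi_*\O(-kH)$ for $k$ in the range of Lemma~\ref{lem:pushforwards fiberwise}, and kill the other two via Lemma~\ref{lem:orthogonal to exterior powers} together with the box-count $i + 2(n-j) < 2n-j$. Your explicit check that $n \leq \binom{2n-1}{2}-1$ for $n\geq 2$ makes precise a step the paper leaves implicit, but otherwise the arguments coincide.
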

\begin{proof}
  By definition we have
  \[
    \begin{aligned}
      L_i & := [ \pi^*\Lambda^{2n-i} Q \otimes \O(-(n-i)H) \to \pi^* \Lambda^i Q ] \\
      L_j & := [ \pi^*\Lambda^{2n-j} Q \otimes \O(-(n-j)H) \to \pi^* \Lambda^j Q ].
    \end{aligned}
  \]
  
  Let us consider the pairwise maps between vector bundles appearing in the definitions of~$L_i$ and~$L_j$. First, we compute
  \[
    \begin{split}
      \RHom_{X_1}(\pi^*\Lambda^j Q, & \pi^* \Lambda^{2n-i} Q \otimes \O(-(n-i)H)) \caniso \\
      & \RHom_{\P(V)}(\Lambda^j Q, \Lambda^{2n-i} Q \otimes \pi_*\O(-(n-i)H)).
    \end{split}
  \]
  Since $i \in [0; n-1]$, the pushforward $\pi_*\O(-(n-i)H)$ is a zero object by Lemma~\ref{lem:pushforwards fiberwise}, so the graded space above is zero. Next, consider
  \[
    \RHom_{X_1}(\pi^* \Lambda^j Q, \pi^* \Lambda^i Q) \caniso \RHom_{\P(V)}(\Lambda^j Q, \Lambda^i Q).
  \]
  Since by assumption $j > i$ this vanishes by Lemma~\ref{lem:orthogonal to exterior powers}. Next, consider the space
  \[
    \begin{split}
      \RHom_{X_1}(\pi^* \Lambda^{2n-j}Q \otimes \O(-(n-j)H) &, \pi^* \Lambda^{2n-i} Q \otimes \O(-(n-i)H)) \caniso \\
      & \caniso \RHom_{\P(V)}(\Lambda^{2n-j}Q, \Lambda^{2n-i}Q \otimes \pi_*\O((i-j)H)).
    \end{split}
  \]
  Again, since $j > i$ the number $i - j$ is a negative number of magnitude at most $n-1$ and the pushforward $\pi_*\O((i-j)H)$ is zero by Lemma~\ref{lem:pushforwards fiberwise}. Finally, consider the graded space
  \[
    \begin{split}
      \RHom_{X_1}(\pi^* \Lambda^{2n-j} Q & \otimes \O(-(n-j)H), \pi^* \Lambda^i Q) \caniso \\
      & \caniso \RHom_{\P(V)}(\Lambda^{2n-j} Q, \Lambda^i Q \otimes \pi_*\O((n-j)H)).
    \end{split}
  \]
  By Lemma~\ref{lem:pushforwards fiberwise} the pushforward $\pi_*\O((n-j)H)$ is isomorphic to $\Sym^{n-j}(\Lambda^2 Q)$. So we need to show the vanishing of the space
  \[
    \RHom_{\P(V)}(\Lambda^{2n-j}Q, \Lambda^i Q \otimes \Sym^{n-j}(\Lambda^2 Q)).
  \]
  Consider the target vector bundle. It can be decomposed into a direct sum of Schur functors of $Q$ with positive weights, and the number of boxes in each appearing diagram is equal to
  \[
    i + 2(n-j) = 2n - j - (j - i),
  \]
  which is strictly smaller than $2n-j$ since $i < j$ by assumption. Thus, in particular, any appearing diagram has strictly less than $2n-j$ rows, and by Lemma~\ref{lem:orthogonal to exterior powers} all such diagrams lie in the orthogonal to $\Lambda^{2n-j} Q$, which confirms the semiorthogonality.

  The four semiorthogonalities together imply that $L_i$ is right-orthogonal to $L_j$, which is exactly what we needed to prove.
\end{proof}

\subsection{Semiorthogonality of the rectangular part}
\label{ssec:rectangular semiorthogonality}

This subsection is dedicated to the proof of the second claim of Theorem~\ref{thm:lefschetz grouping}, i.e., to the fact that the subcategories~$\mA_0$ and~$\mA_0(k H)$ are semiorthogonal for~$k$ between~$1$ and~$\binom{2n-1}{2} - 2$. For the ease of reading we use the following definition.

\begin{definition}
  A number $k \in \Z$ is said to be \emph{in the rectangular range} if $k \in [1; \binom{2n-1}{2} - 2]$.
\end{definition}

In this subsection we generally assume that $k$ is in the rectangular range. Additionally, we will indicate what happens for the case $k = 0$ in remarks below the main computations.

Since by definition $\mA_0$ is generated by objects~$L_i$ for~$i \in [0; n-1]$, the desired semiorthogonality of subcategories is equivalent to the fact that for~$i, j \in [0, n-1]$ the objects~$L_i(kH)$ and~$L_j$ are semiorthogonal. Some computations below depend on which of two numbers,~$i$ or~$j$, is larger. Instead of repeating more or less the same computation two times, we use the following observation:

\begin{lemma}
  \label{lem:ij symmetry with twist}
  Let $i \leq j$ be two numbers. The following statements are equivalent:
  \begin{enumerate}
  \item The vanishing
    \[
      \RHom(L_j, L_i(-k H)) = 0
    \]
    holds for any $k$ in the rectangular range.
  \item Let $\O(\delta)$ be the line bundle $\O_{X_1}(-H - 2h)$. The vanishing
    \[
      \RHom(L_i, L_j (-k H) \otimes \O(\delta)) = 0
    \]
    holds for any $k$ in the rectangular range.
  \end{enumerate}
\end{lemma}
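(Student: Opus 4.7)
The statement is an equivalence between two quantified vanishing statements, so the natural approach is to produce a single transformation relating individual $\RHom$ spaces and then check that the transformation respects the rectangular range.

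The plan is to apply Serre duality on $X_1$. Both $L_i$ and $L_j$ are perfect complexes (in fact coherent sheaves, by Lemma~\ref{lem:li are exceptional}), so Serre duality gives
\[
  \RHom_{X_1}(L_j, L_i(-kH)) \;\cong\; \RHom_{X_1}\bigl(L_i(-kH),\, L_j \otimes \omega_{X_1}\bigr)^{\!\dual}[-\dim X_1].
\]
Substituting the canonical bundle $\omega_{X_1} \cong \O(-\binom{2n-1}{2}H - 2h)$ from Lemma~\ref{lem:serre duality on x1} and absorbing the twist by $\O(kH)$, the right-hand side is the dual (up to shift) of
\[
  \RHom_{X_1}\!\left(L_i,\; L_j\bigl(-(\tbinom{2n-1}{2}-k)H - 2h\bigr)\right)
  \;\cong\;
  \RHom_{X_1}\!\left(L_i,\; L_j(-k'H) \otimes \O(\delta)\right),
\]
where I set $k' := \binom{2n-1}{2} - k - 1$, so that $-(k'+1)H - 2h = -(\binom{2n-1}{2}-k)H - 2h$ and $\O(\delta) = \O(-H - 2h)$ accounts for the shift by one.

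The final step is purely combinatorial: the involution $k \mapsto \binom{2n-1}{2} - k - 1$ is a bijection from $\{1,2,\ldots,\binom{2n-1}{2}-2\}$ to itself (it sends $1 \leftrightarrow \binom{2n-1}{2}-2$). Hence the vanishing in~(1) for all $k$ in the rectangular range is equivalent, via the Serre-duality isomorphism, to the vanishing in~(2) for all $k'$ in the rectangular range, which is exactly what is to be shown.

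There is no real obstacle here: the argument is a direct application of Serre duality combined with the explicit formula for $\omega_{X_1}$ and the elementary observation that the rectangular range is symmetric about $(\binom{2n-1}{2}-1)/2$. The only thing to verify carefully is the bookkeeping: the $-H$ coming from $\omega_{X_1}(kH)$ at the correct shifted index of $k$ combines with the $-2h$ from $\omega_{X_1}$ to produce precisely the twist $\O(\delta)$ appearing in~(2), and this works because the substitution $k' = \binom{2n-1}{2} - k - 1$ is designed to account for exactly one extra power of $\O(H)$.
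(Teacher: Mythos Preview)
Your proof is correct and is essentially identical to the paper's own argument: apply Serre duality on $X_1$, substitute $\omega_{X_1} \cong \O(-\binom{2n-1}{2}H - 2h)$ from Lemma~\ref{lem:serre duality on x1}, and observe that $k \mapsto \binom{2n-1}{2} - k - 1$ is an involution of the rectangular range. The bookkeeping with $k'$ and the extraction of the twist $\O(\delta) = \O(-H-2h)$ matches the paper exactly.
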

\begin{proof}
  Using Serre duality in $\Dbcoh(X_1)$ and Lemma~\ref{lem:serre duality on x1} we compute
  \[
    \begin{aligned}
      \RHom(L_j, & L_i (-k H))^\dual[\dim X_1] \caniso \RHom(L_i(-kH), L_j \otimes \omega_{X_1}) \caniso \\
      & \caniso \RHom(L_i, L_j((k - \tbinom{2n-1}{2})H - 2h)) \caniso \\
      & \caniso \RHom(L_i, L_j(-(\tbinom{2n-1}{2} -k  - 1)H) \otimes \O(-H - 2h)).
    \end{aligned}
  \]
  Observe that the correspondence~$k \mapsto \tbinom{2n-1}{2} - k - 1$ defines a bijection from the rectangular range~$[1; \binom{2n-1}{2} - 2]$ to itself. The lemma follows.
\end{proof}

By Lemma~\ref{lem:ij symmetry with twist} the second claim of Theorem~\ref{thm:lefschetz grouping} is equivalent to the following lemma.

\begin{lemma}
  \label{lem:ilessj case}
  Let $0 \leq i \leq j \leq n-1$ be two indices, and let $k \in \Z$ be in the rectangular range. Then $\RHom(L_i, L_j(-kH)) = 0$. In fact, the object $L_j(-kH)$ is right-orthogonal to the exceptional pair
  \begin{equation}
    \label{eq:exceptional pair for Li}
    \langle \pi^* \Lambda^{2n-i}Q \otimes \O(-(n-i)H), \Lambda^iQ \rangle
  \end{equation}
  used to define $L_i$. Moreover, the same is true after replacing $L_j(-kH)$ with $L_j(-kH) \otimes \O(\delta)$, where $\delta = -H - 2h$ as defined in Lemma~\textup{\ref{lem:ij symmetry with twist}}.
\end{lemma}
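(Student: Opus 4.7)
The plan is to reduce each required vanishing to a cohomology computation on $\P(V)$ using the adjunction $\pi^* \dashv \pi_*$. Since $L_i$ is the cone of a morphism from $\pi^*\Lambda^{2n-i}Q \otimes \O(-(n-i)H)$ to $\pi^*\Lambda^iQ$, it suffices to show that both $L_j(-kH)$ and $L_j(-kH) \otimes \O(\delta)$ are right-orthogonal to each bundle of the pair. By adjunction and the projection formula, every such $\RHom$ becomes an assertion of the form
\[
  \RHom_{\P(V)}\bigl(\Lambda^a Q,\, \pi_*L_j(mH) \otimes \O(-c h)\bigr) = 0,
\]
for some $a \in \{i,\, 2n-i\}$, $c \in \{0, 2\}$, and an integer $m$ that is linear in $k$, $i$, $n$ and depends on the sub-case.

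The main computation is then $\pi_*L_j(mH)$, which is itself a cone
\[
  \bigl[\Lambda^{2n-j}Q \otimes \pi_*\O(sH) \to \Lambda^jQ \otimes \pi_*\O(mH)\bigr], \qquad s := m - (n-j).
\]
Lemma~\ref{lem:pushforwards fiberwise} gives three regimes for each $\pi_*\O(tH)$: the value is $\Sym^t(\Lambda^2 Q)$ for $t \geq 0$, zero for $t \in [-(\binom{2n-1}{2}-1), -1]$, and a shift of $\O(-(2n-2)h) \otimes \Sym^{\bullet}(\Lambda^2 Q^\dual)$ for $t \leq -\binom{2n-1}{2}$. According to the regimes of $m$ and $s$, three sub-cases emerge: (i) both $\pi_*\O(mH)$ and $\pi_*\O(sH)$ vanish, so $\pi_*L_j(mH) = 0$ trivially; (ii) only one summand survives, so $\pi_*L_j(mH)$ is (a shift of) a single $\Lambda^? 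Q \otimes \Sym^\bullet(\cdots)$; (iii) both summands survive (at the extremes $m \geq n-j$ or $m \leq -\binom{2n-1}{2}$), in which case Lemma~\ref{lem:lis and tautological morphisms} identifies $\pi_*L_j(mH)$ with the cokernel of the universal map $\varphi_{n,j,s}(Q)$ or with the dual of $\varphi_{n-1,j-1,\cdot}(Q)$. In (iii), the map is injective by Corollary~\ref{cor:injective phi maps} since $\rk Q = 2n-1 < 2n$, so the cokernel really is the pushforward.

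For each non-trivial sub-case, the $\RHom$ on $\P(V)$ is checked to vanish by Borel--Weil--Bott. The workhorse is Lemma~\ref{lem:orthogonal to exterior powers} (including its $(\det Q)^{m'}$-twisted variant): one decomposes the relevant tensor product into Schur functors $\Sigma^\lambda Q$ via Pieri's formula, and verifies that every appearing $\lambda$ has too few rows to produce nonzero cohomology by Corollary~\ref{cor:bwb acyclicity}. In sub-case~(iii) the cokernel's Schur decomposition is not immediately explicit, but Lemma~\ref{lem:coker for small twists} provides a useful bound: every appearing $\lambda$ has strictly less than $n + m$ rows. The hypothesis $i \leq j$ keeps total box counts under control, and the rectangular-range bound $k \leq \binom{2n-1}{2}-2$ is exactly what is needed to keep all resulting row counts below the Borel--Weil--Bott threshold.

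I expect the main obstacle to be the verification in sub-case~(iii), particularly its "dual" incarnation when $m \leq -\binom{2n-1}{2}$: one must combine the Pieri-type row bound with the bound from Lemma~\ref{lem:coker for small twists} and run a careful sub-case-by-sub-case check of the acyclicity criterion, tracking how the $\O(-(2n-2)h)$ factor interacts with the relevant $\Sigma^\lambda Q^\dual$ summands. The $\O(\delta)$-twisted statement is handled in parallel with $m$ replaced by $m-1$ and an additional $\O(-2h)$ factor, which alters the regime boundaries but not the structure of the argument.
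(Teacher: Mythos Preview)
Your proposal is correct and follows essentially the same approach as the paper: the paper splits the statement into Lemmas~\ref{lem:ilessj easy part} and~\ref{lem:semiorthogonality ilessj key part}, treating the two bundles of the pair~\eqref{eq:exceptional pair for Li} separately, and in each case performs exactly the regime analysis for $\pi_*\O(tH)$ that you describe, invoking Lemma~\ref{lem:coker for small twists} in the ``both survive'' sub-case and Corollary~\ref{cor:bwb acyclicity} for the box-counting. One small remark: the ``dual'' incarnation of sub-case~(iii) (with $m \leq -\binom{2n-1}{2}$) that you anticipate as the main obstacle does not in fact occur for $k$ in the rectangular range --- the only genuinely non-trivial case is the positive one, handled via $\varphi_{n,j,j-i-k}(Q)$ and the row bound $<2n-i-k$ from Lemma~\ref{lem:coker for small twists}.
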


We prove the semiorthogonality to the two objects in~\eqref{eq:exceptional pair for Li} separately, in lemmas~\ref{lem:ilessj easy part} and~\ref{lem:semiorthogonality ilessj key part}.

\begin{lemma}
  \label{lem:ilessj easy part}
  Let $0 \leq i \leq j \leq n-1$ be two indices, and let $k$ be in the rectangular range. Then the object $\Lambda^i Q$ is left-orthogonal to the exceptional pair
  \begin{equation}
    \label{eq:exceptional pair for Lj twist}
    \langle \pi^* \Lambda^{2n-j}Q \otimes \O(-(n-j+k)H), \Lambda^jQ \otimes \O(-kH) \rangle
  \end{equation}
  and hence to the object $L_j(-kH)$ as well. Moreover, the same is true after replacing $L_j(-kH)$ with $L_j(-kH) \otimes \O(\delta)$, where $\delta = -H - 2h$ as defined in Lemma~\textup{\ref{lem:ij symmetry with twist}}.
\end{lemma}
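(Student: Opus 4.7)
The strategy is to reduce each vanishing to a cohomology computation on $\P(V)$ via the projection formula and then apply Lemma~\ref{lem:pushforwards fiberwise} together with Corollary~\ref{cor:bwb acyclicity}. Throughout I write $\Lambda^i Q$ for its pullback $\pi^*\Lambda^i Q$; the $\pi^* \dashv \pi_*$ adjunction gives
\[
  \RHom_{X_1}(\pi^*\Lambda^i Q,\, \pi^*\Lambda^a Q \otimes \O(-m H - l h))
  \caniso
  \RHom_{\P(V)}(\Lambda^i Q,\, \Lambda^a Q \otimes \O_{\P(V)}(-l) \otimes \pi_*\O(-m H)),
\]
where $l = 0$ in the untwisted case and $l = 2$ after tensoring with $\O(\delta)$.

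For the second object in the pair the relevant exponent is $m = k$ (or $m = k+1$ after the $\delta$-twist), which lies in $[1, \binom{2n-1}{2}-1]$ by the assumption $k \in [1, \binom{2n-1}{2}-2]$, so $\pi_*\O(-mH) = 0$ by part~(2) of Lemma~\ref{lem:pushforwards fiberwise}. For the first object, with $m = n-j+k$ or $m = n-j+k+1$, the same argument works whenever $m \leq \binom{2n-1}{2}-1$. The remaining sub-case is $m \geq \binom{2n-1}{2}$: writing $m = \binom{2n-1}{2} + m'$ with $m' \geq 0$, part~(3) of Lemma~\ref{lem:pushforwards fiberwise} identifies $\pi_*\O(-mH)$ with $\O_{\P(V)}(-(2n-2)) \otimes \Sym^{m'}(\Lambda^2 Q^\dual)$ up to a cohomological shift that does not affect vanishing. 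Using $\Lambda^{2n-j}Q \caniso \Lambda^{j-1}Q^\dual \otimes \det Q$ (valid since $\rk Q = 2n-1$), the vanishing reduces to
\[
  \RGamma\bigl(\P(V),\, \O_{\P(V)}(-c) \otimes \Lambda^i Q^\dual \otimes \Lambda^{j-1}Q^\dual \otimes \Sym^{m'}(\Lambda^2 Q^\dual)\bigr) = 0,
\]
with $c = 2n-3$ in the untwisted case and $c = 2n-1$ in the $\delta$-twisted case.

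Decomposing the non-$\O(-c)$ factor into Schur summands $\Sigma^\lambda Q^\dual$ of total weight $i + j - 1 + 2m'$, each such $\lambda$ has at most this many rows. The rectangular-range bound $k \leq \binom{2n-1}{2}-2$ forces $m' \leq n-j-2$ (respectively $m' \leq n-j-1$), and combined with the hypothesis $i \leq j$ this produces row counts of at most $2n-5$ and $2n-3$ respectively, both strictly less than the corresponding $c$. Corollary~\ref{cor:bwb acyclicity} then yields the desired acyclicity. The delicate point is precisely this sub-case: one must track the Serre-duality twist $\O(-(2n-2)h)$ from Lemma~\ref{lem:pushforwards fiberwise}\,(3) together with the additional $\O(-2)$ from $\delta$ and verify that the bound on $m'$ imposed by the rectangular range keeps the row count strictly below the acyclicity threshold, which it does by a narrow but positive margin thanks to $i \leq j$.
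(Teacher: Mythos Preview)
Your proof is correct and follows essentially the same route as the paper's own argument: reduce to $\P(V)$ via adjunction, kill the small-$m$ cases by the vanishing in Lemma~\ref{lem:pushforwards fiberwise}(2), and in the remaining large-$m$ window rewrite $\Lambda^{2n-j}Q \caniso \Lambda^{j-1}Q^\dual \otimes \det Q$ and bound the number of boxes (hence rows) in the Schur summands to invoke Corollary~\ref{cor:bwb acyclicity}. Your box-count bounds $2n-5$ and $2n-3$ are exactly the paper's bounds $(2n-3)-(j-i+2)$ and $(2n-3)-(j-i)$ specialized to the worst case $i=j$, so the two proofs are the same computation presented with different bookkeeping.
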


\begin{remark}
  This is an example of a statement that works only under the assumption that~$i \leq j$. For~$i > j$ the object~$\Lambda^i Q$ is not, in general, left-orthogonal to the pair~\eqref{eq:exceptional pair for Lj twist}. Without Lemma~\ref{lem:ij symmetry with twist} we would have to make a separate, but very similar statement about the case~$i > j$.
\end{remark}

\begin{proof}
  First observe that
  \[
    \RHom_{X_1}(\pi^* \Lambda^i Q, \pi^* \Lambda^jQ \otimes \O(-kH)) = \RHom_{\P(V)}(\Lambda^iQ, \Lambda^j Q \otimes \pi_*\O(-kH)).
  \]
  By Lemma~\ref{lem:pushforwards fiberwise} the pushforward $\pi_*\O(-kH)$ vanishes for any $k$ from $1$ to $\binom{2n-1}{2}-1$. Since $k$ is in the rectangular range, it is between $1$ and $\binom{2n-1}{2} - 2$, hence the space vanishes and $\Lambda^i Q$ is semiorthogonal to the second object of the exceptional pair~\eqref{eq:exceptional pair for Li}. This argument shows that the space vanishes not only for the twist by $\O(-kH)$, but for $\O(-kH) \otimes \O(\delta)$ as well.

  Now consider semiorthogonality to the first object in~\eqref{eq:exceptional pair for Lj twist}:
  \[
    \begin{split}
      \RHom_{X_1}(\pi^* \Lambda^i Q, & \pi^* \Lambda^{2n-j}Q \otimes \O(-(n-j+k)H)) \caniso \\
      & \caniso \RHom_{\P(V)}(\Lambda^iQ, \Lambda^{2n-j}Q \otimes \pi_*\O(-(n-j+k)H)).
    \end{split}
  \]
  Since $n-j+k$ is always positive (as $j \leq n-1$), for small values of $k$ the pushforward is also zero. But for $k = \binom{2n-1}{2} - (n-j)$ and for larger values of $k$ the pushforward is not zero. According to Lemma~\ref{lem:pushforwards fiberwise} we will get
  \[
    \begin{array}{>{\displaystyle}l c >{\displaystyle}r}
      k = \binom{2n-1}{2} - (n-j) & \rightsquigarrow & \O(-(2n-2))[\ldots] \\
      k = \binom{2n-1}{2} - (n-j-1) & \rightsquigarrow & \O(-(2n-2))[\ldots] \otimes \Lambda^2 Q^\dual \\
      \cdots & \rightsquigarrow & \cdots \\
      k = \binom{2n-1}{2} -2 & \rightsquigarrow & \O(-(2n-2))[\ldots] \otimes \Sym^{n - j - 2}(\Lambda^2 Q^\dual)
    \end{array}
  \]
  Thus to prove the semiorthogonality for every $k$ in the rectangular range it remains to show the vanishing of
  \[
    \begin{split}
      \RHom(\Lambda^i Q, & \Lambda^{2n-j} Q \otimes \O(-(2n-2)) \otimes \Sym^{h}(\Lambda^2 Q^\dual)) \caniso \\
      \caniso & \, \RGamma(\P(V), \Lambda^i Q^\dual \otimes \Lambda^{2n-j} Q \otimes \O(-(2n-2)) \otimes \Sym^h(\Lambda^2 Q^\dual))
    \end{split}
  \]
  where $h \in [0, n-j-2]$. Let us write this differently, in the form where all bundles could be obtained by applying a Schur functor with a \emph{positive weight} to $Q^\dual$, by replacing the exterior power~$\Lambda^{2n-j} Q$ with~$\Lambda^{j-1} Q^\dual \otimes \O(1)$:
  \begin{equation}
    \label{eq:ilessj easy part}
    \RGamma(\P(V),
    \O(-(2n-3))
    \otimes
    \Lambda^i Q^\dual \otimes \Lambda^{j-1} Q^\dual \otimes \Sym^h(\Lambda^2 Q^\dual))
  \end{equation}
  
  It turns out that all Young diagram that occur in the Littlewood--Richardson decomposition of the tensor product on the right hand side have
  \[
    \leq i + j - 1 + 2(n - j - 2) = (2n - 3) - (j - i + 2)
  \]
  boxes. Since by assumption $i \leq j$, this number is strictly smaller than $2n-3$, and this implies that the cohomology~\eqref{eq:ilessj easy part} vanishes by Borel--Weil--Bott theorem (see Corollary~\ref{cor:bwb acyclicity}).

  Observe that replacing $\O(-kH)$ with~$\O(-kH) \otimes \O(-\delta)$, i.e., with the line bundle
  \[
    \O(-(k+1)H - 2h),
  \]
  results in the following modification: in the equation~\eqref{eq:ilessj easy part}, instead of $\O(-(2n-3))$, the negative twist is now~$\O_{\P(V)}(-(2n-1))$, while~$h$ lies in the segment $[1, n - j - 1]$. The same argument proves the semiorthogonality in this case.
\end{proof}

\begin{remark}
  One can track the case $k = 0$ through the computations to see that
  \[
    \RHom_{X_1}(\Lambda^i Q, L_j) \caniso \RHom_{\P(V)}(\Lambda^i Q, \Lambda^j Q) \caniso \Lambda^{j-i} V.
  \]
\end{remark}

Now that have proved Lemma~\ref{lem:ilessj easy part}, it remains to show that the first object in the exceptional pair~\eqref{eq:exceptional pair for Li} defining $L_i$ is semiorthogonal to $L_j(-kH)$. 

\begin{lemma}
  \label{lem:semiorthogonality ilessj key part}
  Let $0 \leq i \leq j \leq n-1$ be two indices, and let $k$ be in the rectangular range. Then the object $\pi^* \Lambda^{2n-i} Q \otimes \O(-(n-i))$ is semiorthogonal to $L_j(-kH)$. Moreover, the same is true after replacing $L_j(-kH)$ with $L_j(-kH) \otimes \O(\delta)$, where $\delta = -H - 2h$ as defined in Lemma~\textup{\ref{lem:ij symmetry with twist}}.
\end{lemma}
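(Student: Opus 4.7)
Set $m := n-i-k$. By the projection formula and adjunction for $\pi\colon X_1 \to \P(V)$, the plan is to rewrite
\[
\RHom_{X_1}(\pi^*\Lambda^{2n-i}Q \otimes \O(-(n-i)H),\, L_j(-kH)) \iso \RHom_{\P(V)}(\Lambda^{2n-i}Q,\, \pi_* L_j(mH)),
\]
and then analyze $\pi_*L_j(mH)$, which by the definition of $L_j$ is the cone of
\[
\Lambda^{2n-j}Q \otimes \pi_*\O((m-n+j)H) \to \Lambda^j Q \otimes \pi_*\O(mH).
\]
The behavior of the line bundle pushforwards is controlled by Lemma~\ref{lem:pushforwards fiberwise}, so the next step is to split into cases according to the size of $m$.

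The first case is $m \ge n-j$ (equivalently $k \le j-i$, which only arises when $i < j$). Here both pushforwards are nonzero, and Lemma~\ref{lem:lis and tautological morphisms}(1) combined with Corollary~\ref{cor:injective phi maps} identifies $\pi_*L_j(mH)$ with $\coker(\varphi_{n,j,m-n+j}(Q))$. The plan is to invoke Lemma~\ref{lem:coker for small twists}, which expresses this cokernel as a direct sum of $\Sigma^\lambda Q$ with $\lambda$ having fewer than $n+m$ rows; since $k \ge 1$ forces $n+m \le 2n-i-1$, Lemma~\ref{lem:orthogonal to exterior powers} then kills the $\RHom$. The second case is $0 \le m \le n-j-1$, where the source pushforward vanishes and we are left with $\Lambda^j Q \otimes \Sym^m(\Lambda^2 Q)$; its Schur summands have at most $j+2m \le 2n-j-2 < 2n-i$ rows (using $i \le j$), so again Lemma~\ref{lem:orthogonal to exterior powers} applies. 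The third case, in which $\pi_*\O(mH)$ and $\pi_*\O((m-n+j)H)$ both vanish, is trivial.

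A key bookkeeping step is to verify that under the hypothesis $i \le j$ and $k \in [1, \binom{2n-1}{2}-2]$, the ``Serre-dual'' regime $m \le -\binom{2n-1}{2}+n-j$ from the third clause of Lemma~\ref{lem:pushforwards fiberwise} never actually arises: a direct calculation shows that this range would force $k \ge \binom{2n-1}{2}+j-i$, contradicting $k \le \binom{2n-1}{2}-2$ combined with $j \ge i$. This is the main obstacle in the sense that it is the only genuinely delicate piece of arithmetic; once noted, the three cases above exhaust all admissible values of $m$.

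For the twisted statement with $\O(\delta) = \O(-H-2h)$, the plan is to absorb $\O(-2h) = \pi^*\O(-2)$ on the left, reducing to $\RHom_{\P(V)}(\Lambda^{2n-i}Q \otimes \O(2),\, \pi_*L_j((m-1)H))$, and then apply the same case analysis with $m$ replaced by $m-1$. The extra twist by $\O(2)$ is handled using the general form of Lemma~\ref{lem:orthogonal to exterior powers} (the case $m' = 2$), which requires Schur summands to have fewer than $2n-i-2$ rows; the row-count bounds from each case tighten by exactly the needed amount thanks again to the assumption $k \ge 1$ and $i \le j$, with the slightly awkward subcase $i = j$ working because $m = n-i$ is forbidden by $k \ge 1$.
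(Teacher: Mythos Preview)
Your proposal is correct and follows essentially the same approach as the paper: the same three-way case split according to the signs of the two line-bundle twists, the same appeal to Lemma~\ref{lem:coker for small twists} (via Lemma~\ref{lem:lis and tautological morphisms}) in the ``both nonnegative'' case, and the same use of Lemma~\ref{lem:orthogonal to exterior powers} (with $m'=2$ in the twisted version). The only cosmetic differences are your reparametrization by $m = n-i-k$ and your explicit bookkeeping for the Serre-dual regime, which the paper handles in one line by bounding the magnitudes of the negative twists by $k$ (respectively $k+1$).
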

\begin{proof}
  We first discuss the case without the twist by $\O(\delta)$. We need to show that
  \[
    \begin{split}
      \RHom_{X_1}(& \pi^*\Lambda^{2n-i} Q, [\pi^* \Lambda^{2n-j} Q \otimes \O((j-i-k)H) \to \pi^* \Lambda^j Q ((n-i-k)H)] ) \caniso \\
      \caniso & \,\, \RHom_{\P(V)}(\Lambda^{2n-i}Q, \\
      & \qquad [\Lambda^{2n-j} Q \otimes
      \eqnmarkbox[blue]{firstpushforward}{\pi_*\O((j-i-k)H)}
      \to \Lambda^jQ \otimes
      \eqnmarkbox[red]{secondpushforward}{\pi_*\O((n-i-k)H)}])
      = 0
    \end{split}
  \]
  \annotate{below, left}{firstpushforward}{\protect\smallcircled{$1$}}
  \annotate{below, right}{secondpushforward}{\protect\smallcircled{$2$}}
  \vspace{0.3cm}

  Note that $n-i > j-i \geq 0$. Thus, depending on the value of $k$, there are three options for the signs of the twists \textcolor{blue}{\smallcircled{$1$}} and \textcolor{red}{\smallcircled{$2$}}. We will discuss them separately.

  If $k > n - i$, then both twists are negative numbers of magnitude $\leq k$. Since $k$ is in the rectangular range, by Lemma~\ref{lem:pushforwards fiberwise} both pushforwards are zero and the semiorthogonality is trivial.

  If $j-i < k \leq n-i$, then the twist \textcolor{blue}{\smallcircled{$1$}} is negative, while the twist \textcolor{red}{\smallcircled{$2$}} is non-negative. Thus to prove the semiorthogonality we need to show that
  \[
    \RHom(\Lambda^{2n-i}Q, \Lambda^j Q \otimes \pi_*\O(n-i-k))
    \caniso
    \RHom(\Lambda^{2n-i}Q, \Lambda^j Q \otimes \Sym^{n-i-k}(\Lambda^2 Q)) = 0
  \]
  in the specified range of $k$. As in the proof of Lemma~\ref{lem:lis semiorthogonality}, note that any Young diagram appearing in the decomposition of the target vector bundle into Schur functors of~$Q$ has~$j + 2(n-i-k)$ boxes, and since in this case~$j-i < k$ and $i \leq j$, we know that each diagram has strictly less than $2n-i$ boxes, in particular strictly less than $2n-i$ rows, so by Lemma~\ref{lem:orthogonal to exterior powers} all such diagrams lies in the orthogonal to $\Lambda^{2n-i}Q$, which confirms the semiorthogonality in this range of values for $k$.

  Finally, we come to the most difficult case, $k \leq j - i$. Then both twists \textcolor{blue}{\smallcircled{$1$}} and \textcolor{red}{\smallcircled{$2$}} are non-negative, and we want to show that the object $\Lambda^{2n-i}Q$ is semiorthogonal to the cone of the morphism $\pi_*(L_j((n - i -k)H))$:
  \begin{equation}
    \label{eq:doubly positive pushforward}
    \Lambda^{2n-j}Q \otimes \Sym^{j-i-k}(\Lambda^2 Q) \to \Lambda^jQ \otimes \Sym^{n-i-k}(\Lambda^2 Q).
  \end{equation}

  By Lemma~\ref{lem:lis and tautological morphisms} this is exactly the map $\varphi_{n, j, j-i-k}$ from Lemma~\ref{lem:coker for small twists}. So it is injective and its cokernel is a direct sum of Schur functors of Young diagrams with strictly less than~$(2n-i)-k$ rows. Hence by Lemma~\ref{lem:orthogonal to exterior powers} the cokernel is semiorthogonal to $\Lambda^{2n-i}Q$ since $k \geq 0$. This is exactly what we wanted to show.

  Now let us note what changes after replacing $L_j(-kH)$ by $L_j(-kH) \otimes
  \O(\delta)$. The vanishing we need to prove is now the following:
  \begin{equation}
    \label{eq:ilessj complicated with twist}
    \begin{split}
      & \RHom_{\P(V)} (\Lambda^{2n-i}Q \otimes \O(2), \\
      & \qquad [
      \Lambda^{2n-j}Q \otimes \pi_*\O((j-i-k-1)H)
      \to
      \Lambda^jQ \otimes \pi_*\O((n-i-k-1)H)
      ]) = 0.
    \end{split}
  \end{equation}
  Again, we divide the proof into three cases depending on the signs of the twists. When both twists are negative, they are negative with magnitude at most $k+1$, and then by Lemma~\ref{lem:pushforwards fiberwise} both pushforwards are zero. When the first twist is negative and the second is non-negative, i.e., when $k$ belongs to the segment $[j-i, n-i-1]$, we need to prove the vanishing
  \[
    \RHom(\Lambda^{2n-i}Q \otimes \O(2), \Lambda^jQ \otimes \Sym^{n-i-k-1}(\Lambda^2 Q)) = 0.
  \]
  As above, by Lemma~\ref{lem:orthogonal to exterior powers} it is enough to show that
  \[
    j + 2(n-i-k-1) < 2n - i - 2.
  \]
  Equivalently, we need to show that $2k - (j-i)$ is a strictly positive number. Note that we have two assumptions on $k$: first, $k$ is in the rectangular range, in particular $k \geq 1$; second, we currently consider the case where the sign of the first twist in~\eqref{eq:ilessj complicated with twist} is negative, which means $k \geq j - i$. The inequality follows.

  In the third case, where both twists in~\eqref{eq:ilessj complicated with twist} are non-negative, we need to prove that the object $\Lambda^{2n-i}Q \otimes \O(2)$ is left-orthogonal to the cone of the morphism
  \[
    \Lambda^{2n-j}Q \otimes \Sym^{j-i-k-1}(\Lambda^2 Q) \to \Lambda^jQ \otimes \Sym^{n-i-k-1}(\Lambda^2 Q).
  \]
  As above, this is the map $\varphi_{n, j, j-i-k-1}$ from Lemma~\ref{lem:coker for small twists}. It is injective, its cokernel splits into a direct sum of Schur functors of Young diagrams with strictly less than $(2n-i) - k - 1$ rows. Since $k$ is in the rectangular range, we have $k \geq 1$, so by Lemma~\ref{lem:orthogonal to exterior powers} the cone of this map is semiorthogonal to $\Lambda^{2n-i}Q \otimes \O(2)$.
\end{proof}

\begin{remark}
  One can track the case $k = 0$ through the proof to see that the vanishing
  \[
    \RHom_{X_1}(\pi^*\Lambda^{2n-i}Q \otimes \O(-(n-i)H), L_j) = 0
  \]
  holds.
\end{remark}

Lemmas~\ref{lem:ilessj easy part} and~\ref{lem:semiorthogonality ilessj key part} together imply Lemma~\ref{lem:ilessj case}, and, by Lemma~\ref{lem:ij symmetry with twist}, we see that for any~$k$ in the rectangular range and any $i, j$ in the range $[0; n-1]$ we have the semiorthogonality
\[
  \RHom_{X_1}(L_i, L_j(-kH)) = 0.
\]
In other words, we proved the second claim of Theorem~\ref{thm:lefschetz grouping}.

Additionally, patching together the remarks after Lemmas~\ref{lem:ilessj easy part} and~\ref{lem:semiorthogonality ilessj key part} concerning the extra case $k=0$, one sees the following enhancement to the results of Subsection~\ref{ssec:lis semiorthogonality}:
\begin{lemma}
  Let $i \leq j \in [0; n-1]$ be two indices.
  Then $\RHom_{X_1}(L_i, L_j) \caniso \Lambda^{j-i} V$. For a triple~$i \leq j \leq k$ the composition map
  \[
    \RHom(L_i, L_j) \otimes \RHom(L_j, L_k) \to \RHom(L_i, L_k)
  \]
  is isomorphic to the multiplication
  \[
    \Lambda^{j-i} V \otimes \Lambda^{k-j} V \to \Lambda^{k - i} V
  \]
  in exterior algebra.
\end{lemma}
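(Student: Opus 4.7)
The plan is to reduce $\RHom_{X_1}(L_i, L_j)$ to $\RHom_{\P(V)}(\Lambda^i Q, \Lambda^j Q)$ using the defining triangles of $L_i$ and $L_j$, and then invoke the identification with $\Lambda^{j-i}V$ recorded in the remark following Lemma~\ref{lem:ilessj easy part}. Applying $\RHom(-, L_j)$ to the triangle $\pi^*\Lambda^{2n-i}Q \otimes \O(-(n-i)H) \to \pi^*\Lambda^i Q \to L_i$, the first $\RHom$ vanishes by the remark after Lemma~\ref{lem:semiorthogonality ilessj key part} (the $k=0$ case), while the second equals $\Lambda^{j-i}V$ by the remark after Lemma~\ref{lem:ilessj easy part}. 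Hence precomposition with the canonical map $\pi^*\Lambda^i Q \to L_i$ induces an isomorphism $\RHom_{X_1}(L_i, L_j) \isoarrow \Lambda^{j-i}V$.

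To verify compatibility with composition, I would first obtain an explicit description of this isomorphism. Applying $\RHom(\pi^*\Lambda^i Q, -)$ to the triangle defining $L_j$ and using $\pi_*\O(-(n-j)H) = 0$ from Lemma~\ref{lem:pushforwards fiberwise}, postcomposition with $\pi^*\Lambda^j Q \to L_j$ gives an isomorphism $\RHom(\pi^*\Lambda^iQ, \pi^*\Lambda^jQ) \isoarrow \RHom(\pi^*\Lambda^iQ, L_j)$. Combined with the full faithfulness of $\pi^*$, this says that every $f\colon L_i \to L_j$ admits a unique lift $\tilde f\colon \Lambda^iQ \to \Lambda^jQ$ on $\P(V)$ making the evident square with the canonical maps $\pi^*\Lambda^\bullet Q \to L_\bullet$ commute. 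A direct diagram chase then shows that if $g\colon L_j \to L_k$ lifts to $\tilde g$, then $g \circ f$ lifts to $\tilde g \circ \tilde f$, so composition of morphisms between the $L_\bullet$ transports to composition in $\Hom_{\P(V)}(\Lambda^\bullet Q, \Lambda^\bullet Q)$.

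Finally, the standard $\GL(V)$-equivariant map $\Lambda^{j-i}V \to \Hom_{\P(V)}(\Lambda^iQ, \Lambda^jQ)$, sending $v_{i+1} \wedge \cdots \wedge v_j$ to the morphism $q_1 \wedge \cdots \wedge q_i \mapsto \bar v_{i+1} \wedge \cdots \wedge \bar v_j \wedge q_1 \wedge \cdots \wedge q_i$ (where $\bar v$ denotes the image of $v$ in $Q$), is manifestly compatible with wedging of monomials. Both sides are irreducible $\GL(V)$-representations of the same dimension, so Schur's lemma implies that this map is an isomorphism and that it coincides, up to a nonzero scalar which can be absorbed into the normalization, with the identification from the earlier remark. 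The only (minor) obstacle is precisely this final scalar-matching, but once it is in place composition of $L_\bullet$-morphisms corresponds to the exterior product $\Lambda^{j-i}V \otimes \Lambda^{k-j}V \to \Lambda^{k-i}V$, as required.
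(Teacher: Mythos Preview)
Your proposal is correct and follows exactly the approach the paper intends: the paper merely states this lemma as a consequence of ``patching together the remarks after Lemmas~\ref{lem:ilessj easy part} and~\ref{lem:semiorthogonality ilessj key part} concerning the extra case $k=0$,'' and you have spelled out precisely that patching via the defining triangle of $L_i$. Your treatment of the composition statement---lifting morphisms through the canonical maps $\pi^*\Lambda^\bullet Q \to L_\bullet$ and invoking associativity of the wedge product---is more detailed than anything the paper provides, and the diagram chase is sound; the scalar ambiguity you flag is harmless since one can fix the identifications consistently by using the explicit wedging map $\Lambda^{j-i}V \to \Hom_{\P(V)}(\Lambda^iQ,\Lambda^jQ)$ throughout.
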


\subsection{Semiorthogonality of the non-rectangular part}
\label{ssec:lefschetz tail semiorthogonality}

In this subsection we prove the third claim of Theorem~\ref{thm:lefschetz grouping}. We will again rely on the results from Section~\ref{sec:symmetric algebra exterior square}. We need to prove the following vanishing.

\begin{lemma}
  \label{lem:lefschetz tail}
  Let $0 \leq i \leq n-1$ be an index, and let $k \in [ \binom{2n-1}{2}-1, \binom{2n-1}{2}+n-2]$. Then
  \[
    \RGamma(X_1, L_i(-kH)) = 0.
  \]
\end{lemma}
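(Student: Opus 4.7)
The plan is to pushforward $L_i(-kH)$ along the map $\pi\colon X_1 \to \P(V)$ and reduce to checking acyclicity of certain Schur-functor bundles on $\P(V)$ via the Borel--Weil--Bott theorem through Corollary~\ref{cor:bwb acyclicity}. The range splits naturally into a boundary case $k = \binom{2n-1}{2} - 1$ and a main case $k = \binom{2n-1}{2} + k''$ with $k'' \in [0, n-2]$.

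In the boundary case, Lemma~\ref{lem:pushforwards fiberwise} gives $\pi_*\O(-kH) = 0$, so up to shift $\pi_* L_i(-kH) \iso \Lambda^{2n-i}Q \otimes \O(-(2n-2)h) \otimes \Sym^{n-i-1}(\Lambda^2 Q^\dual)$; using the identity $\Lambda^{2n-i}Q \iso \Lambda^{i-1}Q^\dual \otimes \O(h)$, valid since $\rk Q = 2n-1$, this rewrites as $\O(-(2n-3)h) \otimes \Lambda^{i-1}Q^\dual \otimes \Sym^{n-i-1}(\Lambda^2 Q^\dual)$. Every Schur summand $\Sigma^\lambda Q^\dual$ of the last two tensor factors has $|\lambda| = 2n - i - 3 \leq 2n-4$, hence fewer than $2n-3$ rows, so Corollary~\ref{cor:bwb acyclicity} forces its acyclicity when twisted by $\O(-(2n-3))$. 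The case $i = 0$ is trivial since $L_0 \iso \O$ and $\pi_*\O(-kH) = 0$ directly.

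In the main case with $i \geq 1$, Lemma~\ref{lem:lis and tautological morphisms}(2) identifies $\pi_* L_i(-kH)$, up to shift, with $\O(-(2n-3)h)$ tensored with the cone of the map $\psi$ dual to the tautological morphism $\varphi_{n-1, i-1, k''}(Q)$. Decomposing the cone as $\ker(\psi)[1] \oplus \coker(\psi)$ and dualizing, Proposition~\ref{prop:phi maps} together with Corollary~\ref{cor:cone of tautological morphism} forces every Schur summand $\Sigma^\lambda Q^\dual$ that appears to satisfy either $\mathrm{ht}(\lambda) \leq 2n-4$ (the intermediate heights $2n-3, 2n-2$ are excluded by the corollary) or $\mathrm{ht}(\lambda) = 2n-1$ (larger heights would vanish since $\rk Q = 2n-1$). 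The low-height summands are acyclic after the $\O(-(2n-3))$ twist by Corollary~\ref{cor:bwb acyclicity} exactly as before. The case $i = 0$ again reduces directly to the acyclicity of $\O(-(2n-2)h) \otimes \Sym^{k''}(\Lambda^2 Q^\dual)$, which follows from the bound $2k'' \leq 2n-4$.

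The principal obstacle is handling the $\mathrm{ht}(\lambda) = 2n-1$ part of the cone, where Corollary~\ref{cor:bwb acyclicity} does not apply directly; the idea is to exploit the fact that $k''$ stays small in our range. The Schur summands of $\ker(\psi)$ and $\coker(\psi)$ appear in the source $\Lambda^{i-1}Q^\dual \otimes \Sym^{k''+n-i}(\Lambda^2 Q^\dual)$ and the target $\Lambda^{2n-1-i}Q^\dual \otimes \Sym^{k''}(\Lambda^2 Q^\dual)$ of $\psi$, respectively, so by Pieri's formula (Theorem~\ref{thm:pieri}) combined with Theorem~\ref{thm:plethysm description} any such summand arises as $\lambda = \mu + (\text{vertical strip})$ for some $\mu$ with only even-length columns. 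Height $2n-1$ forces the first column of $\mu$ to have length $2n-2$ and the strip to place its unique row-$(2n-1)$ box in column $1$, giving $\lambda_{2n-1} = 1$. For the second column of $\mu$ to also have length $2n-2$ one would need $|\mu| \geq 4n-4$; but $|\mu|$ equals $2k''$ or $2(k'' + n - i)$, bounded respectively by $2(n-2)$ and $2(2n-3)$, both strictly less than $4n-4$. So the second column of $\mu$ has length at most $2n-4$, the second column of $\lambda$ has length at most $2n-3$, and $\lambda_{2n-2} = 1$. Plugging $\lambda_{2n-1} = \lambda_{2n-2} = 1$ into the Borel--Weil--Bott sequence $\alpha + \rho = (3,\, \lambda_1 + 2n-1,\, \ldots,\, \lambda_{2n-2} + 2,\, \lambda_{2n-1} + 1)$ of Theorem~\ref{thm:bwb} produces a collision between the leading entry $3$ and the entry $\lambda_{2n-2} + 2 = 3$, so $\O(-(2n-3)) \otimes \Sigma^\lambda Q^\dual$ is acyclic. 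Combining all cases gives the desired vanishing $\RGamma(X_1, L_i(-kH)) = 0$.
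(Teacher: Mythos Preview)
Your overall strategy matches the paper's proof exactly: pushforward along~$\pi$, treat the boundary value~$k=\binom{2n-1}{2}-1$ by itself, and for~$k=\binom{2n-1}{2}+k''$ with~$k''\in[0,n-2]$ use Lemma~\ref{lem:lis and tautological morphisms}(2) together with Corollary~\ref{cor:cone of tautological morphism} to exclude Schur summands of height~$2n-3$ or~$2n-2$ from the cone of~$\psi$, leaving only the case~$\mathrm{ht}(\lambda)=2n-1$ to handle.

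The gap is in that last case. You argue that~$\mu^T_2\le 2n-4$ implies~$\lambda^T_2\le 2n-3$, i.e.\ that adding a vertical strip raises a column length by at most one. This is false in general: take~$\mu=(1^{2n-2})$ (a single column of even length~$2n-2$) and~$\lambda=(2^{2n-2},1)$; then~$\lambda/\mu$ is a vertical strip of length~$2n-1$, yet~$\mu^T_2=0$ while~$\lambda^T_2=2n-2$. Your bound on~$|\mu|$ alone does not prevent such behaviour; one also needs the bound on the strip length (which you do not invoke). Relatedly, the claim that height~$2n-1$ forces the first column of~$\mu$ to have length exactly~$2n-2$ is unjustified: a priori~$\mu^T_1$ could be any even number~$\le 2n-2$, with the strip supplying several boxes to the first column of~$\lambda$. (If~$\mu^T_1\le 2n-4$ the conclusion~$\lambda^T_2\le\mu^T_1\le 2n-4$ is immediate, so this part is harmless; but the main inference~$\mu^T_2\le 2n-4\Rightarrow\lambda^T_2\le 2n-3$ still needs work when~$\mu^T_1=2n-2$.)

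The paper closes this case more simply by counting boxes in~$\lambda$ rather than in~$\mu$: any summand~$\Sigma^\lambda Q^\dual$ of either the source or the target of~$\psi$ has~$|\lambda|=2n-i-1+2k''\le 4n-5-i$, and a diagram with~$2n-1$ rows and~$\lambda_{2n-2}\ge 2$ would have at least~$2(2n-2)=4n-4$ boxes, a contradiction. Hence~$\lambda_{2n-2}=1$ and the Borel--Weil--Bott collision you identify (the entry~$3$ meeting~$\lambda_{2n-2}+2=3$) finishes the proof. Replacing your~$\mu$-argument with this one-line box count fixes the gap.
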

\begin{proof}
  We can compute (hyper)cohomology on $X_1$ by first pushing the object down along the morphism $\pi\colon X_1 \to \P(V)$:
  \begin{equation}
    \label{eq:lefschetz tail vanishing}
    \begin{split}
      \RGamma(X_1, & L_i(-kH)) \caniso \RGamma(\P(V), \pi_*(L_i(-kH))) \caniso \\
      & \caniso \RGamma(\P(V), [ \Lambda^{2n-i}Q \otimes \pi_*\O(-(k+n-i)H) \to \Lambda^iQ \otimes \pi_*\O(-kH) ]).
    \end{split}
  \end{equation}
  First, we will prove the vanishing of cohomology~\eqref{eq:lefschetz tail vanishing} in the special case $k = \binom{2n-1}{2} - 1$, the smallest allowed value of~$k$. It is distinguished by the fact that by Lemma~\ref{lem:pushforwards fiberwise} the pushforward~$\pi_*\O(-kH)$ is a zero object in this case, while it is nonzero for any~$k > \binom{2n-1}{2}-1$. Note that since~$i \leq n-1$ and~$k \geq \binom{2n-1}{2}-1$, the first twist~$-(k+n-i)$ is always a negative number of magnitude~$\geq \binom{2n-1}{2}$ and thus the pushforward~$\pi_*\O(-(k+n-i)H)$ is always described by the formula from the third part of Lemma~\ref{lem:pushforwards fiberwise}. Due to the vanishing of the pushforward~$\pi_*\O(-kH)$ in this special case it remains to prove the vanishing of
  \[
    \begin{aligned}
      \RGamma(\P(V), & \,\, \Lambda^{2n-i} Q \otimes \pi_*\O(-(\tbinom{2n-1}{2} + n - i - 1)H)) \caniso \\
      & \caniso \RGamma(\P(V), \O(-(2n-2)) \otimes \Lambda^{2n-i} Q \otimes \Sym^{n - i - 1}(\Lambda^2 Q^\dual)) \caniso \\
      & \caniso \RGamma(\P(V), \O(-(2n-3)) \otimes \Lambda^{i-1} Q^\dual \otimes \Sym^{n-i-1}(\Lambda^2 Q^\dual)).
    \end{aligned}
  \]
  Here we used Lemma~\ref{lem:pushforwards fiberwise} for the first isomorphism and the identity
  \[
    \Lambda^{2n-i}Q \caniso \Lambda^{i-1} Q^\dual \otimes \det(Q) \caniso \Lambda^{i-1} Q^\dual \otimes \O_{\P(V)}(1)
  \]
  for the second one.
  We can assume that $i > 0$ since for~$i = 0$ the bundle~$\Lambda^{2n-i} Q$ is zero and there is nothing to prove. Any Young diagram appearing in the decomposition of the tensor product~$\Lambda^{i-1} Q^\dual \otimes \Sym^{n-i-1}(\Lambda^2 Q^\dual)$ has~$2n - 3 - i$ boxes, and for~$i > 0$ this is strictly smaller than~$2n-3$, and then the vanishing of cohomology holds by Corollary~\ref{cor:bwb acyclicity}.

  Since the vanishing has been proved for $k = \binom{2n-1}{2}-1$ we can assume that $k = \binom{2n-1}{2} + k^\prime$, where $k^\prime \in [0; n-2]$. Applying Lemma~\ref{lem:pushforwards fiberwise} to~\eqref{eq:lefschetz tail vanishing} we see that we need to prove the vanishing of the cohomology for the object
  \[
    \begin{split}
      [ \Lambda^{2n-i} & Q \otimes \O(-(2n-2)) \otimes \Sym^{n-i+k^\prime}(\Lambda^2 Q^\dual) \to \Lambda^i Q \otimes \O(-(2n-2)) \otimes \Sym^{k^\prime}(\Lambda^2 Q^\dual) ] \caniso \\
      & \caniso \O(-(2n-3)) \otimes [ \Lambda^{i-1} Q^\dual \otimes \Sym^{n-i+k^\prime}(\Lambda^2 Q^\dual) \to \Lambda^{2n-2 - (i-1)} Q^\dual \otimes \Sym^{k^\prime}(\Lambda^2 Q^\dual) ].
    \end{split}
  \]
  Note that the morphism
  \begin{equation}
    \label{eq:lefschetz tail object}
    \Lambda^{i-1} Q^\dual \otimes \Sym^{n-i+k^\prime}(\Lambda^2 Q^\dual) \xrightarrow{\psi} \Lambda^{2n-2 - (i-1)} Q^\dual \otimes \Sym^{k^\prime}(\Lambda^2 Q^\dual),
  \end{equation}
  which we will call $\psi$, 
  is by Lemma~\ref{lem:lis and tautological morphisms} equal to the dual to the morphism $\varphi_{n-1, i-1, k^\prime}(Q)$ from Proposition~\ref{prop:phi maps}. As such, by construction the cone of the morphism~\eqref{eq:lefschetz tail object} is isomorphic to the direct sum~$\ker(\psi)[1] \oplus \coker(\psi)$, and both the kernel and cokernel are isomorphic to direct sums of some Schur functors applied to the vector bundle~$Q^\dual$. Thus what we need to prove is that for any Young diagram $\lambda$ appearing in the decomposition of either kernel or cokernel of the map $\psi$ with nonzero multiplicity the cohomology
  \begin{equation}
    \label{eq:cohomology of twist should vanish}
    \RGamma(\P(V), \O(-(2n-3)) \otimes \Sigma^\lambda Q^\dual)
  \end{equation}
  vanishes.

  Note that if the number of rows in the diagram $\lambda$ is $2n-4$ or less, the vanishing of~\eqref{eq:cohomology of twist should vanish} is automatic by Corollary~\ref{cor:bwb acyclicity}. More generally, from the Borel--Weil--Bott theorem~\ref{thm:bwb} we conclude that the cohomology~\eqref{eq:cohomology of twist should vanish} vanishes if and only if the diagram $\lambda$ satisfies at least one of the following conditions:
  \begin{itemize}
  \item The $(2n-3)$'rd row of $\lambda$ is empty;
  \item The $(2n-2)$'nd row of $\lambda$ has exactly one box;
  \item The $(2n-1)$'st row of $\lambda$ has exactly two boxes.
  \end{itemize}

  The key observation is that since~$\psi$ is the dual map to~$\varphi_{n-1, i-1, k^\prime}$, by Corollary~\ref{cor:cone of tautological morphism} in the decompositions of the kernel and the cokernel of~$\psi$ there are no diagrams~$\lambda$ with~$2n-2$ or~$2n-3$ rows. As such, only a diagram $\lambda$ with exactly $2n-1$ rows may violate the vanishing~\eqref{eq:cohomology of twist should vanish}. By looking at the map~\eqref{eq:lefschetz tail object} and using the bound $k^\prime \leq n-2$ we see that the number of boxes in any diagram $\lambda$ appearing with nonzero multiplicity is
  \[
    i - 1 + 2(n-i+k^\prime) = 2n - i  - 1 + 2 k^\prime \leq 4n - 5 - i.
  \]
  Since we assume that $\lambda$ has $2n-1$ rows, its' $(2n-2)$'nd row must be nonempty; however, now it is clear that the $(2n-2)$'nd row of $\lambda$ cannot have two or more boxes since any such diagram has at least $2 \cdot (2n-2) = 4n - 4 > 4n-5 -i$ boxes. Thus, for any $\lambda$ appearing in~\eqref{eq:lefschetz tail object} with nonzero multiplicity and having $2n-1$ rows we know that its $(2n-2)$'nd row has exactly one box, and then the vanishing~\eqref{eq:cohomology of twist should vanish} holds by Borel--Weil--Bott theorem, as noted above.
\end{proof}

This finishes the proof of Theorem~\ref{thm:lefschetz grouping}: we have proved each of three claims of the theorem separately, heavily relying on the facts established in Section~\ref{sec:symmetric algebra exterior square}, mostly on Proposition~\ref{prop:phi maps} and Corollary~\ref{cor:cone of tautological morphism}.

\section{Conjectures}
\label{sec:conjectures}

In Theorem~\ref{thm:lefschetz grouping} we have explicitly constructed some admissible subcategory $\mA \subset \Dbcoh(X_1)$ and found a Lefschetz exceptional collection in it. As discussed in the introduction, this subcategory is a candidate for being a geometric realization of the homologically projectively dual category to the standard Lefschetz collection on $\Gr(2, V)$. However, there are quite a lot of things left to prove in order to complete this picture. In this section we list some desired properties of the category $\mA$ that we conjecture to be true.

\subsection{Linearity and homological projective duality}
\label{ssec:conj linearity}
The category $\mA$ should be linear with respect to the projection morphism~$p\colon X_1 \to \P(\Lambda^2 V^\dual)$. This means that for any $i \in [0, n-1]$ and any integer $k \in \Z$ we expect that the twist $L_i(kH) \in \Dbcoh(X_1)$ lies in the subcategory $\mA$, not only for the values of~$k$ mentioned in Theorem~\ref{thm:lefschetz grouping}. Linearity is, of course, required if we want to say anything related to the HPD interpretation of the category $\mA$.
We conjecture that the category $\mA$ is, in fact, the homologically projectively dual category to the Grassmannian $\Gr(2, V)$:

\begin{conjecture}
  \label{conj:main conjecture}
  The subcategory~$\mA \subset \Dbcoh(X_1)$ is linear with respect to the projection morphism~$p\colon X_1 \to \P(\Lambda^2 V^\dual)$, and there exists a~$\Dbcoh(\P(\Lambda^2 V^\dual))$-linear equivalence between~$\mA$ and the homologically projectively dual category to~$\Gr(2, V)$.
\end{conjecture}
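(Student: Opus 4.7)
The plan is two-pronged: first I would establish $p$-linearity of the admissible subcategory $\mA$, and only then construct a $\Dbcoh(\P(\Lambda^2 V^\dual))$-linear equivalence with the homologically projectively dual category of $\Gr(2, V)$. Since $\O(H) \caniso p^* \O_{\P(\Lambda^2 V^\dual)}(1)$, linearity reduces to showing that $\mA$ is stable under the autoequivalence $-\otimes \O(H)$, or equivalently that each twist $L_i(jH)$ with $i \in [0, n-1]$ and $j \in \Z$ lies in $\mA$. Theorem~\ref{thm:lefschetz grouping} handles a finite range of~$j$; for $j$ above or below the range, one needs a ``cyclicity'' property of the Lefschetz decomposition under twisting. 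The natural tool is the Serre functor $S_{\mA}$: if one can compute $S_{\mA}$ explicitly and relate it to a power of~$\O(H)$ up to the pullback from $\P(V)$ and a shift, then closure under $-\otimes \O(H)$ would follow automatically. That computation should reduce to Borel--Weil--Bott calculations via the cone presentations of the $L_i$'s and the pushforward formulas in Lemma~\ref{lem:pushforwards fiberwise}, in the same spirit as Section~\ref{sec:lefschetz rectangular by computation}.

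For the HPD identification itself I see two plausible routes. The direct approach, following~\cite{kuznetsov2006gr2l} in the $n = 3$ case, is to construct a Fourier--Mukai kernel from $\mA$ to the universal hyperplane section category of $\Gr(2, V)$ over $\P(\Lambda^2 V^\dual)$. A geometric candidate is supported on the fiber product $X_1 \times_{\P(\Lambda^2 V^\dual)} \mathcal{Y}$, where $\mathcal{Y} \subset \Gr(2, V) \times \P(\Lambda^2 V^\dual)$ is the universal hyperplane section under the Pl\"ucker embedding; the geometric content is that a line $\ell$ in the kernel of a degenerate $2$-form $\omega$ is Pl\"ucker-incident to $[\omega]$ through any $2$-plane containing~$\ell$. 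The cleaner alternative is to invoke the uniqueness of HPD established in~\cite{noncommutativehpd}: if one verifies that $\mA$ is a Lefschetz category over $\P(\Lambda^2 V^\dual)$ whose residual category after base change to a sufficiently generic linear subspace $\P(L) \subset \P(\Lambda^2 V^\dual)$ matches the nontrivial component of the Kuznetsov decomposition of $\Dbcoh(\Gr(2, V) \cap \P(L^\perp))$, then abstract HPD theory forces $\mA$ to be the HPD category.

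The main obstacle in either route is the linear-section computation. Concretely, for a generic hyperplane $H \subset \Lambda^2 V^\dual$ and the corresponding divisor $Y := p^{-1}(\P(H^\perp)) \subset X_1$, one must compute the restrictions $L_i|_Y$ and verify that their mutual Hom spaces match those predicted by the HPD formula relating the residual categories on the two sides. Even the $n = 3$ case occupied most of~\cite{kuznetsov2006gr2l}, and for general $n$ the combinatorial results of Section~\ref{sec:symmetric algebra exterior square} would need to be pushed substantially further---in particular, the ``maximally cancellative'' property of $\varphi_{n, i, m}$ hinted at in the remark following Corollary~\ref{cor:cone of tautological morphism} would make many such computations tractable, so proving that property is itself a key intermediate target. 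As a benchmark, one could first try to transfer the computation to $X_2$ via the expected equivalence $\mA \iso \widetilde{\mA}$, where the singularity of the Pfaffian projection is milder, and attempt to match the outcome with the partial results of Rennemo and Segal for even-dimensional $V$ in~\cite{rennemo-segal-addendum}.
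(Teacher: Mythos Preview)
The statement you are addressing is Conjecture~\ref{conj:main conjecture}, and the paper does \emph{not} prove it; it is explicitly left open. So there is no ``paper's own proof'' to compare your proposal against. Your text is, appropriately, not a proof either but a discussion of possible strategies with candid acknowledgment of the obstacles. In that sense your proposal is consistent with the paper's treatment.

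That said, a few points of comparison and caution. Your suggestion to establish $p$-linearity by computing the Serre functor $S_{\mA}$ and identifying it with a twist is an interesting idea the paper does not raise, but as stated it is incomplete: knowing that $S_{\mA}$ coincides with tensoring by some $\O(mH + ah)$ would only give closure of $\mA$ under \emph{that particular} twist, not under $\O(H)$ alone, and there is no a~priori reason the intrinsic Serre functor of $\mA$ should agree with the restriction of an ambient line-bundle twist. The paper instead frames linearity as the concrete (and still open) assertion that every $L_i(kH)$ lies in $\mA$.

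Your second route, ``invoke the uniqueness of HPD established in~\cite{noncommutativehpd}'', is at odds with what the paper says in Subsection~\ref{ssec:conj linearity}: the author explicitly notes that, to their knowledge, the HPD category has no characterization by a universal property, so the only known way to identify $\mA$ with the HPD category is to construct a $\P(\Lambda^2 V^\dual)$-linear equivalence to the canonical subcategory of $\Dbcoh(\mathcal{H}_{\Gr(2,V)})$. The paper even points out the natural incidence correspondence between $X_1$ and $\mathcal{H}_{\Gr(2,V)}$ as a source of candidate functors, which overlaps with your ``direct'' Fourier--Mukai approach. If you believe there is a genuine uniqueness/recognition theorem in \cite{noncommutativehpd} that bypasses this, you should cite it precisely; otherwise this route is not available.
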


The exceptional collection in~$\mA$ from Theorem~\ref{thm:lefschetz grouping} has expected number of semiorthogonal blocks of expected lengths, so this seems possible. Since homologically projectively dual category has no description by a universal property (to the best of my knowledge), the only way to prove a statement like that would be construct a~$\P(\Lambda^2 V^\dual)$-linear equivalence between~$\mA$ and a certain~$\P(\Lambda^2 V^\dual)$-linear subcategory in the universal hyperplane section variety $\mathcal{H}_{\Gr(2, V)}$ of the Pl\"ucker embedding~$\Gr(2, V) \subset \P(\Lambda^2 V)$ defined explicitly in the main theorem of homological projective duality (see \cite[Def.~6.1]{hpd} or \cite[Def.~7.1]{noncommutativehpd}).

Note that the variety $\mathcal{H}_{\Gr(2, V)}$ is the space of pairs, a~$2$-form~$\omega$ on~$V$ and a~$2$-dimensional subspace of~$V$ which is isotropic with respect to~$\omega$. There is a natural correspondence between this space and~$X_1$: given a point on~$X_1$, i.e., a degenerate~$2$-form~$\omega$ on~$V$ and a one-dimensional subspace~$K_1 \subset V$ in its kernel, any~$2$-dimensional subspace of~$V$ containing~$K_1$ will be isotropic with respect to~$\omega$. So there are certainly some interesting~$\P(\Lambda^2 V^\dual)$-linear functors between~$\Dbcoh(X_1)$ and~$\Dbcoh(\mathcal{H}_{\Gr(2, V)})$. What happens with the subcategory $\mA \subset \Dbcoh(X_1)$ under the action of such functors is far from obvious. Figuring out which functor is the ``correct one'' and proving enough properties to establish $\mA$ as the homologically projectively dual category to $\Gr(2, V)$ seems to be quite difficult.

\subsection{Semiorthogonal decomposition of $X_1$}
\label{ssec:x1 decomposition}
There is a not quite formal argument that whatever the homologically projectively dual category to $\Gr(2, V)$ is, the category $\Dbcoh(X_1)$ should be glued from two copies of that category. This argument was communicated to the author by Alexander Kuznetsov and served as the original motivation for this work. The argument goes as follows.

Consider the partial flag variety $\Fl(1, 2; V)$ and the map $f\colon \Fl(1, 2; V) \to \P(\Lambda^2 V)$ defined as the composition
\[
  \Fl(1, 2; V) \to \Gr(2, V) \monoarrow \P(\Lambda^2 V).
\]
The map~$f$ lets us think of~$\Dbcoh(\Fl(1, 2; V))$ as a category linear over~$\P(\Lambda^2 V)$. The theory of homological projective duality works with what can be called \emph{Lefschetz categories}, i.e., categories equipped with a Lefschetz semiorthogonal decomposition with respect to the action of the derived category of some projective space. The category~$\Dbcoh(\Fl(1, 2; V))$ is not naturally a Lefschetz category in that sense; instead, since~$\Fl(1, 2; V)$ can be identified with the projectivization~$\P_{\Gr(2, V)}(U)$ of the tautological rank-$2$ subbundle~$U$ on~$\Gr(2, V)$, the category~$\Dbcoh(\Fl(1, 2; V))$ has an~$f$-linear semiorthogonal decomposition into two copies of the Lefschetz category~$\Dbcoh(\Gr(2, V))$. If we had some version of homological projective duality that applied not only to Lefschetz categories, but also to categories linearly glued from several Lefschetz categories, then presumably the HPD category to~$\Fl(1, 2; V)$ would be glued from two copies of HPD categories for~$\Gr(2, V)$. To the best of author's knowledge, there are no known theories of ``relative homological projective duality'' flexible enough to make this a rigorous statement. Now, geometric arguments similar to the one showing that the HPD category for~$\Gr(2, V)$ should be (a noncommutative resolution of) the Pfaffian variety in~$\P(\Lambda^2 V^\dual)$ suggest that the HPD category to~$\Fl(1, 2; V)$ in this framework should be the derived category of~$X_1$. And then, if we believe all those ``should''s, there would be a semiorthogonal decomposition for~$\Dbcoh(X_1)$ into two copies of the HPD category for~$\Gr(2, V)$.

We conjecture that there is a semiorthogonal decomposition
\[
  \Dbcoh(X_1) = \langle \mA, \mA(h) \rangle,
\]
where, as usual,~$\O(h)$ is the pullback of~$\O_{\P(V)}(1)$ along the map~$\pi\colon X_1 \to \P(V)$. On the level of~$K_0$ this appears to be true. One can show by an inductive argument that if~$\mA$ is, indeed, a~$\P(\Lambda^2 V^\dual)$-linear category (see Subsection~\ref{ssec:conj linearity}), then the pair of categories~$\mA$ and~$\mA(h)$ together generate~$\Dbcoh(X_1)$. The semiorthogonality between them is not obvious. There are some easy computations one can make involving the object $L_0 := \O_{X_1}$:

\begin{lemma}
  For any $k \in \Z$ we have semiorthogonality $\O(h + kH) \in {}^\perp\mA$ and $\O(kH) \in \mA(h)^\perp$.
\end{lemma}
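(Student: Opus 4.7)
The plan is to reduce both vanishings to a single cohomology computation on $X_1$ and then verify it by pushforward to $\P(V)$. First I would unfold the first claim as $\RHom_{X_1}(\O(h + kH), L_i(jH)) \cong \RGamma(X_1, L_i((j-k)H - h))$; as $k$ runs over $\Z$ with $j$ ranging over the indices of generators of $\mA$, the exponent of $H$ covers every integer value, so the first claim is equivalent to the statement
\[
  \RGamma(X_1, L_i(mH - h)) = 0 \qquad \text{for all } i \in [0, n-1] \text{ and } m \in \Z.
\]
The second claim reduces to the same statement after the shift $m \mapsto m - \binom{2n-1}{2}$ via Serre duality on $X_1$ using Lemma~\ref{lem:serre duality on x1}.

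For the vanishing, the projection formula with $\O(-h) = \pi^* \O_{\P(V)}(-1)$ turns it into $\RGamma(\P(V), \pi_*(L_i(mH)) \otimes \O_{\P(V)}(-1))$, so I would describe $\pi_*(L_i(mH))$ using Lemma~\ref{lem:pushforwards fiberwise} and Lemma~\ref{lem:lis and tautological morphisms}. Its shape depends on the range of $m$: for $m \geq n - i$ it is $\coker(\varphi_{n, i, m-n+i}(Q))$; for $0 \leq m \leq n-i-1$ it is $\Lambda^i Q \otimes \Sym^m(\Lambda^2 Q)$; for $m$ in the range where both line-bundle pushforwards on $X_1$ vanish it is zero; in a narrow intermediate range (only relevant for $i \geq 1$) only the first term's pushforward survives, placed in cohomological degree $d$; and for $m \leq -\binom{2n-1}{2}$ it is $\O_{\P(V)}(-(2n-3)) \otimes \mathrm{cone}(\psi)[-d]$ with $\psi = \varphi_{n-1, i-1, k'}(Q)^\dual$ and $k' = -m - \binom{2n-1}{2} \geq 0$.

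In each range I would decompose the pushforward into a direct sum of Schur functors $\Sigma^\lambda Q$ or $\Sigma^\lambda Q^\dual$ on $\P(V)$, using Proposition~\ref{prop:phi maps}, Lemma~\ref{lem:coker for small twists}, Corollary~\ref{cor:cone of tautological morphism}, and elementary Pieri-type box counts to bound the heights of the $\lambda$, and then apply Corollary~\ref{cor:bwb acyclicity}. In the first four ranges every surviving diagram has at most $2n-2$ rows, sometimes after rewriting $\Lambda^{2n-i} Q = \Lambda^{i-1} Q^\dual \otimes \O_{\P(V)}(1)$ to absorb $\det Q$ and pair against $\O(-(2n-2))$ instead of $\O(-1)$; Borel--Weil--Bott then gives the acyclicity immediately.

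The hard part will be the range $m \leq -\binom{2n-1}{2}$: the component $\coker(\psi)$ of $\mathrm{cone}(\psi)$ can contain Schur functors $\Sigma^\lambda Q^\dual$ with exactly $2n-1$ rows, arising as duals of the universal kernels of $\varphi_{n-1, i-1, k'}$. By Proposition~\ref{prop:phi maps} those kernels live on diagrams of height strictly greater than $2(n-1)$, so restricting to the rank-$(2n-1)$ bundle $Q$ forces the height to be exactly $2n-1$. For any such $\lambda$ I would absorb $\det Q^\dual = \O_{\P(V)}(-1)$ to factor $\Sigma^\lambda Q^\dual = \O_{\P(V)}(-\lambda_{2n-1}) \otimes \Sigma^{\lambda'} Q^\dual$ with $\lambda'$ of height at most $2n-2$. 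The relevant cohomology then becomes $\RGamma(\P(V), \O_{\P(V)}(-(2n-2) - \lambda_{2n-1}) \otimes \Sigma^{\lambda'} Q^\dual)$, and since $\lambda_{2n-1} \geq 1$ the negative exponent strictly exceeds the height of $\lambda'$, so Corollary~\ref{cor:bwb acyclicity}(1) yields the acyclicity.
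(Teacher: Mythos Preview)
Your reduction of both claims to the single vanishing $\RGamma(X_1, L_i(mH - h)) = 0$ for all $m \in \Z$ via Serre duality is correct and matches the paper. The paper, however, then observes that since $\O(H)$ is the pullback of $\O(1)$ from $\P(\Lambda^2 V^\dual)$, any $\binom{2n}{2}$ consecutive powers of $\O(H)$ already generate the subcategory spanned by all of them; hence it suffices to verify the vanishing only for $m \gg 0$, which is your easiest range and is handled immediately by Corollary~\ref{cor:cone of tautological morphism} plus Corollary~\ref{cor:bwb acyclicity}. Your case-by-case analysis over all $m$ is not needed.

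More importantly, your direct approach has a gap in the range $m \leq -\binom{2n-1}{2}$. You reduce to showing that $\O_{\P(V)}(-(2n-2)-\lambda_{2n-1}) \otimes \Sigma^{\lambda'} Q^\dual$ is acyclic for diagrams $\lambda$ of height exactly $2n-1$, and invoke Corollary~\ref{cor:bwb acyclicity}(1) with exponent $k = (2n-2)+\lambda_{2n-1}$. That corollary is only valid for $k \leq \dim V - 1 = 2n-1$: already $\O_{\P^{2n-1}}(-2n)$ has nonzero top cohomology, so ``height of $\lambda'$ strictly less than $k$'' is not a sufficient acyclicity criterion once $k \geq 2n$. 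You only use $\lambda_{2n-1} \geq 1$, which gives $k \geq 2n-1$; what you actually need is $\lambda_{2n-1} = 1$. This is in fact true --- any diagram $\lambda$ occurring in $\Lambda^a(-) \otimes \Sym^b(\Lambda^2(-))$ with at most $2n-1$ rows satisfies $\lambda_{2n-1} \leq 1$, because otherwise the even-column subdiagram $\mu$ with $\lambda/\mu$ a vertical strip would have $\mu_{2n-1} \geq 1$ and hence, its first column being of even length, at least $2n$ rows --- but this combinatorial step is missing from your argument. You also discuss only $\coker(\psi)$, whereas $\ker(\psi) = (\coker\varphi_{n-1,i-1,k'}(Q))^\dual$ may a priori contain $(2n-1)$-row summands as well and requires the same treatment.
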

\begin{proof}
  By definition of the category $\mA$ the first semiorthogonality is equivalent to the fact that for any $i \in [0; n-1]$ and any $k$ we have the vanishing
  \begin{equation}
    \label{eq:p1-fibration first}
    \RHom_{X_1}(\O(h), L_i(kH)) \caniso \RGamma(X_1, L_i(kH - h)) = 0.
  \end{equation}
  Note that since $\O(H)$ is the pullback of the ample line bundle from $\P(\Lambda^2 V^\dual)$ any $\binom{2n}{2}$ consequitive powers of $\O(H)$ generate the subcategory spanned by all $\O(kH)$ for $k \in \Z$. Thus, in order to prove the vanishing~\eqref{eq:p1-fibration first} for all values of $k$, it suffices to prove it for sufficiently large $k$. For~$k \geq n-i$ by the first part of Lemma~\ref{lem:lis and tautological morphisms} we need to show the vanishing of
  \begin{equation}
    \label{eq:computing p1-fibration first}
    \RGamma(\P(V), \O(-1) \otimes
    [
    \Lambda^{2n-i} Q \otimes \Sym^{k-n + i}(\Lambda^2 Q)
    \xrightarrow{\varphi_{n, i, k - n + i}(Q)}
    \Lambda^i Q \otimes \Sym^k(\Lambda^2 Q)
    ]
    ).
  \end{equation}
  We know that the cone of $\varphi_{n, i, k-n+i}(Q)$ splits into a direct sum of Schur functors of $Q$ where all Young diagrams appearing with nonzero multiplicity have either strictly more than $2n$ rows or strictly less than $2n-1$ rows (Corollary~\ref{cor:cone of tautological morphism}). Since the rank of $Q$ is $2n-1$, we see that only diagrams with at most $2n-2$ rows appear in the cone. Then by Corollary~\ref{cor:bwb acyclicity} the twist of the cone by $\O(-h)$ is acyclic for all $k \gg 0$ and the proof is finished.

  Similarly, for the second semiorthogonality it is enough to prove that
  \[
    \RHom_{X_1}(L_i(h), \O(kH)) = 0
  \]
  for all sufficiently negative values of $k$. By Serre duality on $X_1$ (Lemma~\ref{lem:serre duality on x1}) this graded space is dual to
  \[
    \RGamma(X_1, \O(-h) \otimes L_i(-(k + \tbinom{2n-1}{2})H)).
  \]
  Note that for $k \ll 0$ this becomes the same space as in~\eqref{eq:p1-fibration first} above, and we already proved its vanishing.
\end{proof}

\subsection{Same subcategory appears in $X_2$}
\label{ssec:comparison with x2}
Consider the Grassmannian $\Gr(2, V)$ and let us denote by $\tilde{Q}$ the tautological quotient bundle on it. Recall that the variety $X_2$ was defined as the projectivization $\P_{\Gr(2, V)}(\Lambda^2 \tilde{Q}^\dual)$, and let us denote by $\tilde{\pi}\colon X_2 \to \Gr(2, V)$ the projection morphism, with $\tilde{H}$ being the relative ample divisor. We can easily define analogues of the objects $L_i$ on $X_2$:
\[
  \widetilde{L_i} := [ \tilde{\pi}^* \Lambda^{2n-i} \tilde{Q} \otimes \O_{X_2}(-(n-i)\tilde{H}) \to \tilde{\pi}^* \Lambda^i \tilde{Q} ],
\]
where the map is, as for $L_i$'s, given by the multiplication by the~$(n-i)$'th power of the~$2$-form on~$\tilde{Q}$. It is easy to see that the proof of Lemma~\ref{lem:li are exceptional} works on $X_2$ as well, i.e., that each~$\widetilde{L_i}$ is an exceptional coherent sheaf on~$X_2$. We conjecture that the analogue of Theorem~\ref{thm:lefschetz grouping} holds on~$X_2$. Namely, that the collection
\[
  \langle \widetilde{L_0}, \ldots, \widetilde{L_{n-1}} \rangle
\]
is exceptional, and if we denote by $\widetilde{\mA_0}$ the spanned subcategory, then
\[
  \langle \widetilde{\mA_0}, \widetilde{\mA_0}(\tilde{H}), \ldots, \widetilde{\mA_0}((\tbinom{2n-1}{2} - 2)\tilde{H}), \O_{X_2}((\tbinom{2n-1}{2}-1)\tilde{H}), \ldots, \O_{X_2}((\tbinom{2n-1}{2} + n - 2)\tilde{H}) \rangle
\]
is a semiorthogonal collection. Moreover, we expect that the generated subcategory should be equivalent to the category $\mA \subset \Dbcoh(X_1)$.

Numerically, on the level of $K_0$ (or even the $\GL(V)$-equivariant $K_0$), this seems to be true, at least for small values of $n$. One complication that appears on $X_2$ as compared to Theorem~\ref{thm:lefschetz grouping} on $X_1$ boils down to the fact that since the rank of $\Lambda^2 \tilde{Q}^\dual$ is $\binom{2n-2}{2} < \binom{2n-1}{2}$, there are fewer integers $k$ such that $\tilde{\pi}_*\O(k \tilde{H})$ is a zero object (see Lemma~\ref{lem:pushforwards fiberwise}). For example, we have seen in Lemma~\ref{lem:ilessj easy part} that for indices~$0 \leq i \leq j \leq n-1$ and an integer~$k \in \Z$ in the rectangular range the object~$\Lambda^i Q$ on~$X_1$ is left-orthogonal not only to the object~$L_j(-kH)$, but, in fact, to both vector bundles appearing in the definition of~$L_j(-kH)$. An analogue of this strengthening of semiorthogonality on~$X_1$ is no longer true on~$X_2$ for the values of~$k$ between~$\binom{2n-2}{2}$ and~$\binom{2n-1}{2} - 2$ due to the non-vanishing of some pushforwards.

Nevertheless, the semiorthogonality
\[
  \RHom_{X_2}(\tilde{\pi}^* \Lambda^i \tilde{Q}, \widetilde{L_j}(-k\tilde{H})) = 0
\]
analogous to the conclusion of Lemma~\ref{lem:ilessj easy part} can still be proved by using the same tools as in Section~\ref{sec:lefschetz rectangular by computation}, but the computations get messy: there are many different cases to consider, and one needs to make numerous arguments of the type ``if~$\mu$ is a Young diagram with~$2n-2$ rows and at most~$6n - 10 - j$ boxes such that~$\mu_{2n-2} = 1$ and~$\mu_{2n-3} = 2$, then by box-counting reasons the third column of~$\mu$ is shorter than the second by at least~$j + 2$, so since~$i < j + 2$ by assumption, by the Pieri's formula any diagram~$\lambda$ appearing with nonzero multiplicity in the tensor product~$\Lambda^i Q^\dual \otimes \Sigma^\mu Q^\dual$ has~$\lambda_{2n-3} = \mu_{2n-3} = 2$'' (compare the end of the proof of Lemma~\ref{lem:lefschetz tail}), and so on. It seems plausible that a full analogue of Theorem~\ref{thm:lefschetz grouping} on $X_2$ can be proved using only the insights presented in Section~\ref{sec:symmetric algebra exterior square} and a lot of routine combinatorics, but there is probably a more conceptual way. Or, at least, a nicer way to package all the combinatorial cases.

Besides, conjecturally there should exist a fully faithful functor~$\mA \monoarrow \Dbcoh(X_2)$ that sends objects~$L_i$ to~$\widetilde{L_i}$, and the construction of such a functor requires more than just direct combinatorial computations on~$X_2$.

\printbibliography

\end{document}